\theoremstyle{thmstyleone}% Use this style for all theorem-like environments
\newtheorem{theorem}{Theorem}[section]% Meant for continuous numbers
\newtheorem{proposition}{Proposition}[section]% 
\newtheorem{assumption}{Assumption}[section]% Assumption styled like a theorem
\DeclareMathOperator*{\argmin}{arg\,min}
\theoremstyle{thmstyletwo}%
\newtheorem{remark}{Remark}[section]%
\theoremstyle{thmstyleone}% Reuse the same style for definitions and lemmas
\newtheorem{lemma}{Lemma}[section] % Lemmas will be numbered within sections
\begin{document}

\title[Article Title]{The Golden Ratio Primal-Dual Algorithm with Two New Stepsize Rules for Convex-Concave Saddle Point Problems}

%%=============================================================%%
%% GivenName	-> \fnm{Joergen W.}
%% Particle	-> \spfx{van der} -> surname prefix
%% FamilyName	-> \sur{Ploeg}
%% Suffix	-> \sfx{IV}
%% \author*[1,2]{\fnm{Joergen W.} \spfx{van der} \sur{Ploeg} 
%%  \sfx{IV}}\email{iauthor@gmail.com}
%%=============================================================%%

\author*[1,2]{\fnm{Santanu} \sur{Soe}}\email{ma22d002@smail.iitm.ac.in, santanu.soe@student.unimelb.edu.au}

\author[2]{\fnm{Matthew K. }\sur{Tam}}\email{matthew.tam@unimelb.edu.au}
\equalcont{These authors contributed equally to this work.}

\author[1]{\fnm{V.} \sur{Vetrivel}}\email{vetri@iitm.ac.in}
\equalcont{These authors contributed equally to this work.}

\affil*[1]{\orgdiv{Department of Mathematics}, \orgname{Indian Institute of Technology Madras}, \orgaddress{\street{Sardar Patel road}, \city{Chennai}, \postcode{600036}, \state{Tamil Nadu}, \country{India}}}

\affil[2]{\orgdiv{School of Mathematics and Statistics}, \orgname{The University of Melbourne}, \orgaddress{\street{Parkville}, \city{Melbourne}, \postcode{3010}, \state{Victoria}, \country{Australia}}}
%%==================================%%
%% Sample for unstructured abstract %%
%%==================================%%

\abstract{In this paper, we present two stepsize strategies for the extended Golden Ratio primal-dual algorithm (E-GRPDA) designed to address structured convex optimization problems in finite-dimensional real Hilbert spaces. The first rule features a non-increasing primal stepsize that remains bounded below by a positive constant and is updated adaptively at each iteration, eliminating the need to compute the Lipschitz constant of the gradient of the function and the norm of the operator without involving backtracking. The second stepsize rule is adaptive, adjusting based on the local smoothness of the smooth component function and the norm of the operator involved. In other words, we present an adaptive version of the E-GRPDA algorithm. We prove that E-GRPDA achieves an ergodic sublinear convergence rate result with both stepsize rules, based on the function-value residual and constraint violation rather than on the so-called primal–dual gap function. Additionally, we establish an R-linear convergence rate for E-GRPDA with the first stepsize rule, under some standard assumptions and with appropriately chosen parameters. Through numerical experiments on various convex optimization problems, we demonstrate the effectiveness of our approaches and compare their performance to the existing ones.}

%%================================%%
%% Sample for structured abstract %%
%%================================%%

\keywords{GRPDA; Primal-dual problems; Partially Adaptive methods; Golden Ratio; Saddle point problems; Adaptive algorithms}

%%\pacs[JEL Classification]{D8, H51}

%%\pacs[MSC Classification]{35A01, 65L10, 65L12, 65L20, 65L70}

\maketitle

%---------------------------------Introduction-----------%--------------------------
%------------------------------------------------%------------------------------------
\section{Introduction}\label{sec1}

Let $\mathbb{X}$ and $\mathbb{Y}$ be finite-dimensional real Hilbert spaces. In this paper, we consider the following structured optimization problem
\begin{equation}\label{1.1} \min_{x \in \mathbb{X}} \hspace{.1cm} f(x) + g(Kx) + h(x), 
\end{equation}
where $f: \mathbb{X} \to (-\infty, \infty]$ and $g: \mathbb{Y} \to (-\infty, \infty]$ are convex, proper, and lower semicontinuous (lsc) functions, $K: \mathbb{X} \to \mathbb{Y}$ is a linear operator, and $h: \mathbb{X} \to \mathbb{R}$ is a convex, differentiable function with $\bar{L}$-Lipschitz continuous gradient, i.e., 
\begin{equation*} 
\lVert \nabla h(x) - \nabla h(y) \rVert \leq \bar{L} \lVert x - y \rVert, \quad \forall x, y \in \mathbb{X}. 
\end{equation*} 
The \emph{Fenchel dual problem} of \eqref{1.1} is given by \begin{equation}\label{1.3} 
\max_{u\in\mathbb{Y}} -(f+h)^*(-K^*u)-g^*(u),
\end{equation}
where $g^*(y)=\sup_{z\in \mathbb{Y}}\{\langle y,z\rangle -g(z)\}$ denotes the \emph{Legendre--Fenchel conjugate} of $g$ and $K^*$ is the \emph{adjoint} of the linear operator $K$.

A notable example of a problem in the form of \eqref{1.1} is the \emph{Fused LASSO problem} \cite{tibshirani2005sparsity, yuan2006model}, which is commonly used in areas such as genomics, signal processing, and image segmentation. 
% This problem is particularly useful for identifying significant patterns or features by promoting both sparsity and smoothness.
This problem takes the form 
\begin{equation}\label{fussed_LASSO} 
\min_{x\in\mathbb{R}^n}F(x) := \lambda_1 \lVert x \rVert_1 + \lambda_2\|Dx\|_1 + \frac{1}{2} \lVert Ax - b \rVert^2,
\end{equation} 
which corresponds to \eqref{1.1} with $f(x) = \lambda_1\|x\|_1$, $g(x) = \lambda_2\|x\|_1$, $K=D$, and $h(x) = \frac{1}{2}\|Ax - b\|^2$, where $A \in \mathbb{R}^{m \times n}$, $b \in \mathbb{R}^{m}$, and $D \in \mathbb{R}^{(n-1) \times n}$ is the \emph{difference operator} \cite{yan2018new}. Here, $\lambda_1,\lambda_2>0$ are regularization parameters. Furthermore, when $\lambda_2 = 0$, (\ref{fussed_LASSO}) reduces to the well-known \emph{LASSO problem} \cite{tibshirani1996regression}, which is a particular case of (\ref{1.1}) when $h = 0$. Another relevant example of \eqref{1.1} is the \emph{Elastic Net regularization} \cite{zou2005regularization}, which combines both $\ell_1$ and $\ell_2$ penalties. 
% This approach is particularly effective for high-dimensional data where predictors are highly correlated or outnumber the observations, as it balances sparsity and grouping effects to enhance pre dictive accuracy and interpretability. 
Given $f(x) = \lambda_1\|x\|_1$, $g(\cdot) = \frac{1}{2}\|\cdot- b\|^2$, and $h(x) = \lambda_2\|x\|^2$, where $K \in \mathbb{R}^{m \times n}$, $b \in \mathbb{R}^m$, and $\lambda_1, \lambda_2 > 0$ are regularization parameters, the Elastic Net problem can be written as
\begin{equation*} 
\min_{x\in\mathbb{R}^n} F(x) := \lambda_1 \lVert x \rVert_1 + \frac{1}{2}\|Kx-b\|^2 + \lambda_2\|x\|^2.
\end{equation*} 
In addition to these, optimization problems of the form (\ref{1.1}) are prevalent in various other fields, such as image denoising, machine learning, game theory, operations research, and many more, see \cite{goldstein2015adaptive, r6, zhu2008efficient, nedic2009subgradient, bertsekas2009projection, yang2011alternating,chambolle2011first} and references therein.

The convex minimization problem \eqref{1.1} can be solved using the Forward-Backward type splitting methods \cite{lions1979splitting, abbas2015dynamical, nesterov2013gradient}, provided that the \emph{proximal operator} of $f + g \circ K $ can be computed efficiently. However, in practice, this can be challenging even for a simple nonsmooth function $ g $. For instance, consider the case where $ g(x) = \|x\|_1$, and $ f = 0$. It is well known that the proximal map of $ g $ is given by the \emph{soft thresholding operator}, while there is no closed-form solution for the proximal operator of $g\circ K$ when $ K $ is not an orthogonal or diagonal matrix, see \cite[Section 2.2]{10.1561/2400000003} for more discussions. To overcome this difficulty, we can decouple $g$ and $K$ by introducing $g^*$ into \eqref{1.1}. This gives the \emph{saddle point} formulation of the problem as
\begin{equation}\label{1.2}
    \min_{x\in \mathbb{X}}\max_{y\in\mathbb{Y}}~\mathbb{L}(x,y)=f(x)+ h(x)+ {\langle Kx, \,y\rangle}- g^*(y).
\end{equation}
Under strong duality, the saddle point formulation \eqref{1.2} represents a framework for simultaneously solving both the primal problem \eqref{1.1} and the dual problem \eqref{1.3}.
% In most primal-dual algorithms, the smooth function $h$ is contributed by its gradient, and the nonsmooth functions $f$ and $g^*$ are addressed using their proximal operators. 
We begin by reviewing the literature when the smooth term $h=0$ and then proceed to discuss the scenario when $h$ is nonzero.\\
\\
\textbf{Case 1 ($h=0$):}
 To solve \eqref{1.1} and \eqref{1.3}, a classical primal-dual approach is the \emph{alternating direction methods of multipliers} (ADMM); see, for example, \cite{gabay1976dual,glowinski1975approximation,lions1979splitting}. Although ADMM can be applied directly to solve $\min_{x} f(x) + (g\circ K)(x)$, it requires knowledge of the proximal operator of $ g\circ K$, which can be challenging, as highlighted in the example mentioned earlier. To utilize the proximal operators of $f$ and $g$ separately, we can introduce a new variable $z$ and reformulate $\min_{x} f(x) + g(Kx)$ as
 \begin{equation}\label{ADMM}
     \min_{x,z} f(x) + g(z) ~\text{subject to} ~ Kx-z=0.
 \end{equation}
Now, applying ADMM to (\ref{ADMM}) results in the following iterative updates
 \begin{equation}\label{ADMM_1}
     \begin{cases}
         x_{n+1} = \argmin_{x}f(x) + \frac{\delta'}{2}\|Kx-z_n+u_n\|^2,\\
         z_{n+1} = \argmin_{z} g(z) + \frac{\delta'}{2}\|Kx_{n+1}-z+u_n\|^2,\\
         u_{n+1} = u_n + Kx_{n+1} - z_{n+1},
     \end{cases}
 \end{equation}
where $\delta'>0$. Although the $z$-update of~\eqref{ADMM_1} can be computed easily if the proximal operator of $g$ is known, solving the $x$-update requires an iterative approach and the assumption that $K$ has a full column rank. Under the assumption $\operatorname{rank}K=\operatorname{dim}\mathbb{X}$, the $x$-update can be written as $x_{n+1} = \operatorname{prox}_{\frac{1}{\delta'}f}^{K^*K}((K^*K)^{-1}K^*(z_n - u_n))$, where $ \operatorname{prox}_{\frac{1}{\delta'}f}^{K^*K} $ is the generalized proximal operator \cite{zhang2011unified,pock2011diagonal}, which restricts the possible choices for $K$. Moreover, when $ f(x) = \frac{1}{2} \|x- b\|^2$ for some $b\in\mathbb{X}$ and $g = 0$, solving the $x$-update in~\eqref{ADMM_1} is equivalent to solving the linear system 
\begin{equation*}
    (I +\delta'K^* K) x_{n+1} = b + \delta' K^*(z_n-u_n).
\end{equation*}
For large, dense matrices, efficiently solving such systems becomes challenging due to the substantial memory required to store the matrix during computation. These limitations can be circumvented by employing a simple primal-dual approach known as the \emph{Arrow–Hurwicz method} \cite{uzawa1958iterative}. This method leverages the proximal operators of $f$ and $g^*$ individually, and removes the need to solve complicated subproblems or systems involving the operator $K$. Given $(x_0,y_0)\in \mathbb{X}\times \mathbb{Y}$, the iteration scheme for $n\geq 1$ is
\begin{equation}\label{1.2.1}
  \begin{cases}
     x_n= \operatorname{prox}_{\tau f}(x_{n-1}- \tau K^*y_{n-1}),\\
     y_n= \operatorname{prox}_{\sigma g^*}(y_{n-1}+\sigma Kx_{n}),
  \end{cases}
\end{equation}
where $\tau, \sigma > 0$ are the primal and dual stepsize parameters, respectively. The convergence of the Arrow–Hurwicz method \cite{uzawa1958iterative} heavily depends on the choice of objective functions, their domains, and the appropriate selection of stepsizes $\tau$ and $\sigma$. For example, when the domain of $g^*$ is bounded, the convergence of the method has been established with a convergence rate of $\mathcal{O}(1/\sqrt{N})$, where $N$ is the number of iterations, measured by the primal-dual gap function; see \cite{chambolle2011first,nedic2009subgradient}. However, the sequence generated by this method does not converge in general; see \cite{he2012convergence, he2022convergence} for a series of counterexamples in both one and higher dimensions. To achieve convergence of (\ref{1.2.1}) in more general cases, Chambolle and Pock \cite{chambolle2011first} introduced an extrapolation step in (\ref{1.2.1}) following the computation of the primal variable $x_n$. For $(x_0,y_0)\in \mathbb{X}\times \mathbb{Y}$, $\delta\in[0,1]$, and $n\geq1$, the \emph{Chambolle--Pock primal-dual algorithm} iterates as
\begin{equation}\label{1.2.2}
    \begin{cases}
         x_n= \operatorname{prox}_{\tau f}(x_{n-1}- \tau K^*y_{n-1}),\\
         \Tilde{x}_n = x_n +\delta(x_n-x_{n-1}),\\
      y_n= \operatorname{prox}_{\sigma g^*}(y_{n-1}+ \sigma K\Tilde{x}_{n}).
    \end{cases}
\end{equation}
For $\delta = 0$, the primal-dual algorithm \eqref{1.2.2} reduces to the Arrow-Hurwicz algorithm \eqref{1.2.1}. Chambolle and Pock proved the convergence of \eqref{1.2.2} for $\delta = 1$, commonly known as the \emph{Primal-Dual Hybrid Gradient (PDHG)} method. They demonstrated that this method achieves a convergence rate of $\mathcal{O}(1/N)$, measured by the primal-dual gap function, provided that the stepsize condition $ \tau \sigma \lVert K \rVert^2 < 1 $ is satisfied, see \cite[Theorem 1]{chambolle2011first}. Additionally, convergence is also guaranteed when $\tau\sigma\|K\|^2=1$, see \cite[Theorem 3.3]{condat2013primal}, which simplifies stepsize selection by reducing it to the tuning of a single parameter. More recently, Chang and Yang introduced a modified version of the Arrow-Hurwicz algorithm called the \emph{Golden Ratio primal-dual algorithm} (GRPDA) \cite[Algorithm 3.1]{chang2021goldengrpda}. This algorithm allows for larger primal-dual stepsizes compared to PDHG, with its benefits demonstrated through experimental results \cite{chang2021goldengrpda}. For $(x_0, y_0) \in \mathbb{X} \times \mathbb{Y}$, with $z_0 = x_0$ and \( n \geq 1 \), the GRPDA iteration scheme is given by
\begin{equation}\label{GRPDA}
    \begin{cases}
        z_n = \dfrac{\psi-1}{\psi}x_{n-1} + \dfrac{1}{\psi}z_{n-1},\\
         x_n= \operatorname{prox}_{\tau f}(z_{n}- \tau K^*y_{n-1}),\\
      y_n= \operatorname{prox}_{\sigma g^*}(y_{n-1}+ \sigma Kx_{n}).
    \end{cases}
\end{equation} 
The main difference between the Chambolle-Pock primal-dual algorithm \eqref{1.2.2} and GRPDA \eqref{GRPDA} is that, instead of the extrapolation term, a convex combination based on the Golden Ratio is added in the \(x\)-subproblem of \eqref{1.2.1}. This convex combination allows GRPDA to converge when the stepsize condition $\tau \sigma \lVert K \rVert^2 < \phi$ is satisfied, where $\phi$ is the Golden Ratio. The authors also demonstrated that GRPDA achieves a convergence rate of $\mathcal{O}(1/N)$, measured by the primal-dual gap function, see \cite[Theorem 3.2]{chang2021goldengrpda}. Furthermore, it was shown that a form of \eqref{1.2.2} with \(\delta=1\) and \eqref{GRPDA} is equivalent when GRPDA is interpreted as a preconditioned ADMM, see \cite[Section 3]{chang2021goldengrpda} for detailed calculations. When the primal-dual algorithm \eqref{1.2.2} is interpreted as preconditioned ADMM, the preconditioned matrix must necessarily be positive definite to prove global convergence, which is guaranteed by the stepsize condition $\tau\sigma\|K\|^2<1$. However, in the case of GRPDA, the preconditioned matrix is allowed to be indefinite, offering a broader choice of parameters for practical applications. Nevertheless, all the above-mentioned algorithms require the knowledge of the norm of the operator $K$ during the application, which is a disadvantage when the norm is unknown.\\
\\
\textbf{Case 2 ($h\neq 0$):} When the smooth term is nonzero, i.e., $ h \neq 0$, all the aforementioned methods can still be applied by replacing $f$ with $f+h$. However, in this case, computing the proximal operator of $f + h$ could be more challenging than that of 
$f$ alone. To address this shortcoming, the authors \cite{condat2013primal, vu2013splitting} extended the PDHG algorithm by incorporating the first-order approximation of the gradient of the differentiable function $h$ into the primal variable updates during the iterations of \eqref{1.2.2}, resulting in the \emph{\text{Condat--V\~u algorithm}}. Given $(x_0, y_0) \in \mathbb{X} \times \mathbb{Y}$ and $n\geq1$, the Condat--V\~u algorithm iterates as
\begin{equation}\label{condat_vu}
    \begin{cases}
         x_n= \operatorname{prox}_{\tau f}(x_{n-1}- \tau K^*y_{n-1}- \tau\nabla h(x_{n-1})),\\
         \Tilde{x}_n = 2x_n-x_{n-1}\\
      y_n= \operatorname{prox}_{\sigma g^*}(y_{n-1}+ \sigma K\Tilde{x}_{n})
    \end{cases}
\end{equation}
In this case, the sequences generated by \eqref{condat_vu} converge under the stepsize condition $\tau\sigma\lVert K\rVert^2+\tau({\bar L}/{2})\leq 1$. In recent years, various three-operator primal-dual algorithms, akin to the Condat–-V\~u algorithm, have been developed to address the problem \eqref{1.1}. These algorithms ensure global convergence by requiring a stepsize inequality that depends on both the $\|K\|$ and the Lipschitz constant $\bar{L}$. Notable examples include the \emph{primal-dual fixed point algorithm} (PDFP) \cite{chen2016primal}, the \emph{primal-dual three-operator splitting} (PD3O) \cite{yan2018new}, and the \emph{primal-dual Davis-Yin splitting} (PDDY) \cite{salim2022dualize}, all of which share the stepsize condition $\tau\sigma\lVert K \rVert^2 < 1 $ and $ \tau \bar{L} < 2 $, ensuring convergence to a saddle point under the assumptions of convexity and global Lipschitz continuity of the smooth component function. More recently, Malitsky and Tam \cite{malitsky2023first&tam} developed a three-operator primal-dual algorithm, also known as the \emph{primal-dual twice reflected} (PDTR) method, which converges globally under the stepsize condition $\tau\sigma\lVert K\rVert^2 + 2\tau\bar{L} < 1$. Since GRPDA \eqref{GRPDA} can be seen as an extension of the PDHG algorithm for $h=0$, Zhou et al. \cite{zhou2022new} similarly extended GRPDA \eqref{GRPDA} to the three-operator splitting (i.e., $h\neq0$) settings, referred to as the \emph{extended Golden Ratio primal-dual algorithm} (E-GRPDA). Given $(x_0, y_0) \in \mathbb{X} \times \mathbb{Y}$ with $z_0=x_0$, E-GRPDA iterates as
\begin{equation}\label{E-GRPDA}
    \begin{cases}
          z_n = \dfrac{\psi-1}{\psi}x_{n-1} + \dfrac{1}{\psi}z_{n-1},\\
         x_n= \operatorname{prox}_{\tau f}(z_{n}- \tau K^*y_{n-1}- \tau\nabla h(x_{n-1})),\\
           y_n  = \operatorname{prox}_{\sigma g^*}\left(y_{n-1} + \sigma Kx_n\right).\\
    \end{cases}
\end{equation}
The sequence generated by \eqref{E-GRPDA} converges, as measured by the function value residual and constraint violation, provided the stepsize condition $\tau\sigma\lVert K\rVert^2+2\tau\bar L(1-\mu)\leq \psi(1-\mu)$ is satisfied with $\mu\in(0,1)$ and $\psi\in(1,\phi]$.

Nonetheless, in all the aforementioned primal-dual algorithms and their extensions, prior knowledge of the operator norm of $K$ and the Lipschitz constants $\bar{L}$ is required to select appropriate values for $\tau$ and $\sigma$ to achieve the desired numerical performance within a given number of iterations. These requirements are often impractical or expensive to compute for many optimization problems. For example, in CT image reconstruction \cite{sidky2012convex}, the exact spectral norm of $K$ is typically large, dense, and expensive to compute. Therefore, for scenarios like this, we need stepsizes that do not depend on the parameters $\bar L$ and $\|K\|$. To avoid this setback, Malitsky and Pock \cite{malitsky2018first} incorporate a backtracking linesearch strategy into a version of the Condat--V\~u algorithm~\eqref{condat_vu}. However, it still requires global Lipschitzness of $\nabla h$. In a similar manner, Chang et al. \cite{chang2022goldenlinesearch} also introduced the linesearch technique into \eqref{GRPDA} to remove the dependency on $\|K\|$. While linesearch enables the algorithm to adaptively determine appropriate stepsizes during execution, it requires an inner loop within each outer iteration to run until a stopping criterion is met, which can be time-consuming. 

Adaptive methods offer effective alternatives to linesearch by refining poor initial stepsizes and eliminating dependence on global Lipschitz continuity by exploiting the approximation of the inverse local Lipschitz constant. In light of this idea, several authors have developed adaptive methods for different cases of \eqref{1.1}. In 2019, Malitsky and Michenko \cite{malitsky2020adaptive} proposed an adaptive stepsize rule for \eqref{1.1} when $f=g=0$, using the local information of the gradient of $h$. More recently, Vladarean et al. \cite{vladarean2021first} introduced an \emph{adaptive primal-dual algorithm} (APDA) for a special case of (\ref{1.1}) when $f=0$, which also uses the local information of $\nabla h$ around the current iterates.
Building on similar concepts, Latafat et al. \cite{latafat2023adaptive} introduced a \emph{fully adaptive three-operator splitting method} (adaPDM+) to solve (\ref{1.1}) by leveraging the local Lipschitz continuity of $\nabla h$ and incorporating a linesearch technique to remove the reliance on $\|K\|$ as a parameter. Independently, Malitsky and Mishchenko \cite{Yuramalitsky} extended their works from \cite{malitsky2020adaptive} and proposed the \emph{adaptive proximal gradient method} (AdaProxGD) for the special case of~(\ref{1.1}) when $g=0$. Given the limitations of linesearch, we aim to develop algorithms for~(\ref{1.1}) that utilize both the local estimates of the Lipschitz constant $\bar L$ and the operator norm of $K$.

In 2020, Malitsky \cite[Algorithm 1]{malitsky2020golden} proposed an \emph{adaptive Golden Ratio algorithm} (aGRAAL) for solving mixed variational inequality problems (\textbf{MVIP}). This method addresses the following \textbf{MVIP} for a given monotone, locally Lipschitz continuous function $F$ on a finite-dimensional Hilbert space $\mathcal{V}$, and a proper, convex, and lsc function $g:\mathcal{V}\to(-\infty, \infty]$.
\begin{equation}\label{malitsky_goolden}
   \text{Find}~ u^* \in \mathcal{V} ~\text{such that}~ \langle F(u^*), u - u^* \rangle + \theta(u) - \theta(u^*) \geq 0,~ \forall u \in \mathcal{V}.
\end{equation}
The framework of (\ref{malitsky_goolden}) can be recovered by writing the optimality conditions of (\ref{1.1}) and taking $u = (x, y)$ with $ F(u) = (\nabla h(x) + K^* y, -Kx) $ and $ \theta(u) = f(x) + g^*(y) $. However, as pointed out in \cite{chang2021goldengrpda,vladarean2021first}, if aGRAAL is applied to this derived variational inequality framework of (\ref{1.1}) with $ u = (x,y), v = (z,w) \in \mathcal{V} = \mathbb{X} \times \mathbb{Y} $, and the inner product $ \langle (x,y), (z,w) \rangle = \langle x, z \rangle + \langle y, w \rangle $, the following primal-dual algorithm is obtained
\begin{algorithm}[H]
\caption{aGRAAL}\label{aGRAAL}
\begin{algorithmic}[1]
    \State \textbf{Initialization:} Let $x_0,x_1\in\mathbb{X}$, $y_0,y_1\in\mathbb{Y}$, and set $(\bar x_0,\bar y_0)=(x_0,y_0)$. Suppose $\lambda_0>0$, $\psi\in(1,\phi]$ (where $\phi=\tfrac{1+\sqrt5}{2}$ is the golden ratio), $\rho =\psi^{-1}+\psi^{-2}$, $\theta_0>0$, and $\lambda_{\max}\gg0$.
    \State \textbf{Step 1:} \emph{Update}
    \begin{equation*}
        \lambda_n = \min\left\{ \rho \lambda_{n-1}, ~\frac{\psi \theta_{n-1}}{4 \lambda_{n-1}} \frac{\|u_n - u_{n-1}\|^2}{\|F(u_n) - F(u_{n-1})\|^2}, ~\lambda_{\max} \right\}.
    \end{equation*}
    
    \State \textbf{Step 2:} \emph{Compute}
    \begin{equation}\label{aGRAAL_primal_dual}
        \begin{aligned}
            \bar{x}_n &= \dfrac{(\psi - 1) x_n + \bar x_{n-1}}{\psi}, \quad
            x_{n+1} = \operatorname{prox}_{\lambda_n f}\left(\bar{x}_n - \lambda_n K^* y_n - \lambda_n \nabla h(x_n)\right),\\
            \bar{y}_n &= \dfrac{(\psi - 1) y_n + \bar y_{n-1}}{\psi}, \quad
            y_{n+1} = \operatorname{prox}_{\lambda_n g^*}\left(\bar{y}_n + \lambda_n K x_{n}\right).
        \end{aligned}
    \end{equation}
    
    \State \textbf{Step 3:} \emph{Update}
    \begin{equation*}
        \theta_n = \frac{\psi\lambda_n}{\lambda_{n-1}}.
    \end{equation*}
    
    \State \textbf{Step 4:} Let $n \leftarrow n+1$ and return to \textbf{Step 1}.
\end{algorithmic}
\end{algorithm}

The Algorithm \ref{aGRAAL} presented above is a Jacobian-type algorithm that does not leverage information from the current iterate. More precisely, the immediate primal update $x_{n+1}$ is not used in calculating the subsequent dual update $y_{n+1}$. Furthermore, in both the $x$-and $ y$-subproblems of \eqref{aGRAAL_primal_dual}, the primal and dual stepsizes are equal, which can be limiting as it may result in poorer convergence bounds. Therefore, an adaptive method that performs updates in a Gauss-Seidel manner at each iteration would be beneficial in this case, allowing for efficient resolution of (\ref{E-GRPDA}) while leveraging the problem's inherent structure.
%----------------------------------Motivations and Contributions--------------------------%----------------------------------------------------------%------------------------
\subsection{Motivations and Contributions}

Our main motivation is to propose new stepsize strategies for the extended Golden Ratio primal-dual algorithm (\ref{E-GRPDA}), where the primal-dual stepsizes are independent of the parameters $\|K\|$ and $\bar L$. Broadly speaking, our motivations are twofold. Firstly, we propose a stepsize rule for E-GRPDA (\ref{E-GRPDA}) in which the primal stepsize \(\tau_n\) is non-increasing and separated from zero. The existence of the global Lipschitz constant $\bar L$ suffices to prove the global convergence of our proposed Algorithm \ref{algorithm 1}. Nevertheless, verifying such global smoothness can be rather challenging, particularly when the function $h$ exhibits a complex nonsmooth structure. Moreover, non-increasing stepsizes may result in slower convergence and might require more iterations to reach a specified error bound.

Secondly, to address these issues, we propose an adaptive method (Algorithm \ref{algorithm 2}) for (\ref{E-GRPDA}), where the primal stepsize $\tau_n$ adapts to the local geometry of the gradient of $h$, thereby eliminating the dependency on global smoothness of $h$. However, it is worth noting that, although the primal stepsize $\tau_n$ does not adapt to $\|K\|$, this is not a substantial limitation because, in many cases, the Lipschitz constant $\bar L$ often exceeds $\|K\|$ and accurately estimating $\bar L$ itself can be quite subtle or even infeasible, see \cite[Remark 4.1]{vladarean2021first} for a similar comment. 

The rest of this paper outlines the following contributions:
\begin{itemize}
    \item In Section \ref{sec2}, we discuss some useful lemmas and blanket assumptions that apply to the proposed algorithms throughout the paper.
    \vspace{.1cm}
    \item In Section \ref{sec3}, we analyze the extended GRPDA (\ref{E-GRPDA}) with a new stepsize rule proposed in Algorithm \ref{algorithm 1}, which, unlike traditional approaches, does not require knowledge of $\bar L$ and eliminates the need for backtracking by using the estimates of $\|K\|$ around the current iterates. We establish both global iterate convergence and an ergodic $\mathcal{O}(1/N)$ rate of convergence, where $N$ is the iteration count, measured by function value residual and constraint violation. By carrying out refined analysis, we extend the region of convergence of Algorithm \ref{algorithm 1} from $(1,\phi]$ to $(1,1+\sqrt{3})$ as it was done for GRPDA \eqref{GRPDA} in \cite{chang2022grpdarevisited}. Furthermore, we show that when the initial primal stepsize $\tau_0$ is less than or equal to the positive constant $\min\{\frac{\mu}{\sqrt{\beta}\|K\|},\frac{\mu'}{\bar L}\}$, the stepsize condition for (\ref{E-GRPDA}) can still be recovered, which is an advancement for the E-GRPDA algorithm.
    \vspace{.1cm}
    \item In Section \ref{sec4}, we demonstrate nonergodic R-linear convergence rate of Algorithm~\ref{algorithm 1} when both $g^*$ and $h$ are strongly convex functions under some appropriate parameter choices.
    \vspace{.1cm}
    \item In Section \ref{sec5}, we introduce an adaptive stepsize rule to solve (\ref{E-GRPDA}) as formulated in Algorithm~\ref{algorithm 2}, where the stepsize $\tau_n$ can increase in flatter regions, unlike in Algorithm~\ref{algorithm 1}. This approach also does not require backtracking and further improves upon Algorithm~\ref{algorithm 1} by using the local Lipschitz information of $\nabla h$ instead of relying on global Lipschitz continuity. Moreover, it can readily adjust to poor initial stepsizes. We establish the global convergence of the method and demonstrate an $\mathcal{O}(1/N)$ sublinear rate result, measured by functional value residual and constraint violation.

    \item Section \ref{section_6} presents numerical experiments on various optimization problems, including lasso, logistic regression, graph net, fused lasso, and image inpainting. We demonstrate the applicability and performance of the proposed strategies, showing that they outperform existing methods.
\end{itemize}

%----------------------------Preliminaries and Assumptions-----------------------------
%---------------------------------------------------------------------------------
\section{Preliminaries and Assumptions}\label{sec2}
Throughout this paper, we denote the Golden Ratio by $\phi = \frac{1 + \sqrt{5}}{2}$. The norm of a linear operator $K:\mathbb{X}\to\mathbb{Y}$ is defined by $\lVert K \rVert = \operatorname{sup}\{ \lVert Kx\rVert \colon \lVert x \rVert = 1,~ x \in \mathbb{X}\}$.
\vspace{5pt}

Given a proper, convex and lsc function $f: \mathbb{X}\to (-\infty,\infty]$, the \emph{effective domain} of $f$ is given by $\operatorname{dom}(f) = \{x\in\mathbb{X}\colon f(x)<+\infty\}$. The \emph{subdifferential} of $f$ at $x^*\in\operatorname{dom}(f)$ is denoted by $\partial f(x^*)$, and $\partial f: \mathbb{X}\to 2^{\mathbb{X}}$ is defined as
\begin{equation*}
    \partial f(x^*)=\left\{v\in\mathbb{X}: f(x)\geq f(x^*)+\langle v,\, x-x^*\rangle ~\forall x\in\mathbb{X}\right\}.
\end{equation*}
For $\beta>0$, the \emph{proximal operator} \cite[Example 2.5.1]{wotaoyu2022large} of $f$ with the parameter $\beta$ is given by
\begin{equation*}
    \operatorname{prox}_{\beta f}(x)= \argmin_{y\in\mathbb{X}}\big \{f(y)+\dfrac{1}{2\beta}{\lVert y-x\rVert}^2\big\}.
\end{equation*}
For any $\beta>0$, the proximal operator $\operatorname{prox}_{\beta h}$ is well defined; see \cite[Section 1.3.11]{wotaoyu2022large}. Now, we are ready to state the following lemma.
\vspace{5pt}
 \begin{lemma}\label{usefullemma3}
    \cite[Proposition 12.26]{bauschke2017correction} Suppose that $f: \mathbb{X}\to (-\infty,\infty]$ is a proper, convex and lsc function. Then, for any $x\in\mathbb{X}$ and $\lambda>0$, we have $\Tilde q= \operatorname{prox}_{\lambda f}(x)$ if and only if  
\begin{equation*}
    \lambda(f(\Tilde q)-f(y))\leq \langle \Tilde q-x,\, y-\Tilde q \rangle, ~\forall y\in\mathbb{X}.
\end{equation*}
\end{lemma}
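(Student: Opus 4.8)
The plan is to recognize that $\tilde q = \operatorname{prox}_{\lambda f}(x)$ is, by definition, the unique minimizer of the strongly convex function $g(y) := f(y) + \frac{1}{2\lambda}\|y-x\|^2$, and then to translate this minimization property into the stated variational inequality. Since $\operatorname{prox}_{\lambda f}(x) = \argmin_{y} g(y)$ and $g$ is strongly convex (hence admits a unique minimizer), the assertion $\tilde q = \operatorname{prox}_{\lambda f}(x)$ is equivalent to the condition $g(\tilde q) \le g(y)$ for all $y\in\mathbb{X}$. Both implications of the lemma will be derived from this equivalence.

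For the forward direction, I would fix an arbitrary $y$ and test the minimality of $\tilde q$ against the segment points $y_t := (1-t)\tilde q + t y$ for $t\in(0,1]$. Using convexity of $f$ to bound $f(y_t)\le (1-t)f(\tilde q) + t f(y)$ and expanding $\|y_t - x\|^2 = \|\tilde q - x\|^2 + 2t\langle \tilde q - x,\, y-\tilde q\rangle + t^2\|y-\tilde q\|^2$, the inequality $g(\tilde q)\le g(y_t)$ reduces, after cancelling $\frac{1}{2\lambda}\|\tilde q - x\|^2$ and dividing by $t>0$, to $f(\tilde q)\le f(y) + \frac{1}{\lambda}\langle \tilde q - x,\, y-\tilde q\rangle + \frac{t}{2\lambda}\|y-\tilde q\|^2$. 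Letting $t\to 0^+$ and multiplying through by $\lambda>0$ yields the claimed inequality $\lambda(f(\tilde q)-f(y))\le \langle \tilde q - x,\, y-\tilde q\rangle$. Equivalently, one may simply invoke the first-order optimality condition $0\in\partial g(\tilde q) = \partial f(\tilde q) + \frac{1}{\lambda}(\tilde q - x)$ through the subdifferential sum rule, which gives $\frac{1}{\lambda}(x-\tilde q)\in\partial f(\tilde q)$ directly and then only requires rewriting the subgradient inequality.

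For the converse, assuming the variational inequality holds for all $y$, I would rearrange it into $f(y)\ge f(\tilde q) + \frac{1}{\lambda}\langle x-\tilde q,\, y-\tilde q\rangle$ and add $\frac{1}{2\lambda}\|y-x\|^2$ to both sides to recover $g(y)$ on the left. It then suffices to verify the purely algebraic estimate $2\langle x-\tilde q,\, y-\tilde q\rangle + \|y-x\|^2 \ge \|\tilde q - x\|^2$. Writing $y-x = (y-\tilde q) + (\tilde q - x)$ and expanding, the cross terms cancel and leave exactly $\|y-\tilde q\|^2 + \|\tilde q - x\|^2 \ge \|\tilde q - x\|^2$. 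This shows $g(y)\ge g(\tilde q)$ for every $y$, so $\tilde q$ minimizes $g$ and therefore equals $\operatorname{prox}_{\lambda f}(x)$.

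There is no serious obstacle here, as the result is a standard first-order characterization of the proximal point. The only points requiring care are the passage to the limit $t\to 0^+$ in the forward direction (or, in the alternative route, the justification of the subdifferential sum rule, which is automatic since the quadratic term is finite-valued and differentiable everywhere), together with the sign bookkeeping needed to convert the subgradient inequality for $f$ into the precise form $\lambda(f(\tilde q)-f(y))\le \langle \tilde q - x,\, y-\tilde q\rangle$ stated in the lemma.
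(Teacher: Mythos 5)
Your proof is correct. Note, however, that the paper does not actually prove this lemma: it is imported verbatim as Proposition 12.26 of Bauschke--Combettes, so the only ``proof'' in the paper is the citation itself. Your argument is the standard self-contained derivation, and both directions are sound. In the forward direction, testing minimality of $\tilde q$ for $g(y)=f(y)+\tfrac{1}{2\lambda}\|y-x\|^2$ along the segment $y_t=(1-t)\tilde q+ty$, cancelling $\tfrac{1}{2\lambda}\|\tilde q-x\|^2$ (legitimate because properness of $f$ forces $f(\tilde q)<\infty$, as $g(\tilde q)\le g(y_0)<\infty$ for any $y_0\in\operatorname{dom}(f)$), dividing by $t>0$, and letting $t\to0^+$ gives exactly the stated inequality, while points with $f(y)=+\infty$ satisfy it trivially. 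Your alternative route through Fermat's rule and the sum rule $\partial g(\tilde q)=\partial f(\tilde q)+\tfrac{1}{\lambda}(\tilde q-x)$ is equally valid, since the quadratic term is finite-valued and differentiable everywhere. In the converse, your completion-of-the-square identity is right: writing $y-x=(y-\tilde q)+(\tilde q-x)$, the cross terms cancel and the required estimate reduces to $\|y-\tilde q\|^2+\|\tilde q-x\|^2\ge\|\tilde q-x\|^2$, whence $g(y)\ge g(\tilde q)$ for all $y$, and uniqueness of the minimizer of the strongly convex $g$ identifies $\tilde q$ with $\operatorname{prox}_{\lambda f}(x)$. What your approach buys is self-containedness, establishing the characterization from first principles; what the citation buys the authors is brevity, the usual choice for a textbook fact of this kind.
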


Given a non-empty set $D$, the \emph{indicator function} $i_{D}$ is defined as $i_D(x)=0$ if $x\in D$, and $i_D(x)=\infty$ otherwise. Observe that problem \eqref{1.1} can be equivalently reformulated as a constrained optimization problem by introducing an auxiliary variable $w \in \mathbb{Y}$. Specifically, we consider
\begin{equation}\label{eq:lagrangian}
    \min_{x,\,w} \; f(x) + g(w) + h(x) \quad \text{subject to} \quad Kx = w.
\end{equation}
Let $y \in \mathbb{Y}$ denote the Lagrange multiplier for the constraint $Kx=w$. Then, we define the objective function as
\begin{equation*}\label{eq:lagrangian objective}
    \Phi(x, w) := f(x) + h(x) + g(w),
\end{equation*}
and the corresponding Lagrangian as
\begin{equation*}\label{eq:lagrangian function}
    \mathscr{L}(x, w, y) := \Phi(x, w) + \langle y,\, Kx - w \rangle.
\end{equation*}

Next, we make the following blanket assumptions.
\vspace{5pt}
\begin{assumption}\label{assumption1}
    We assume that the set of solutions of (\ref{1.2}) is non-empty. Moreover, it holds that $0\in\operatorname{ri}\big(K(\operatorname{dom}(f))-\operatorname{dom}(g)\big)$.
\end{assumption}
\vspace{5pt}
\indent
Here, $``\operatorname{ri}"$ denotes the \emph{relative interior} \cite[Definition 6.9]{bauschke2017correction} of a convex set. Under Assumption \ref{assumption1}, it follows from \cite[Corollaries 28.2.2 and 28.3.1]{Rockafellar+1970} that $(\bar x, \bar w, \bar y)$ is a saddle point of $\mathscr{L}(\cdot)$ if and only if $(\bar x, \bar w)$ is a solution of \eqref{eq:lagrangian} and $\bar y$ is solution to the dual problem (\ref{1.3}), respectively, which is equivalently characterized by the following inequality
\begin{equation*}
\mathscr{L}(\bar{x}, \bar w,y) \leq \mathscr{L}(\bar{x}, \bar w,\bar{y}) \leq \mathscr{L}(x, w,\bar{y}), ~ \forall (x,w, y) \in \mathbb{X} \times \mathbb{Y}\times\mathbb{Y}. 
\end{equation*}
In this context, the pair $(\bar{x}, \bar{y})$ solves the saddle-point (min-max) problem \eqref{1.2}. Throughout the paper, we denote by $\mathbf{\Omega}$ the set of all saddle points of $\mathscr{L}$, which is nonempty under Assumption \ref{assumption1} and is explicitly written as
\begin{equation*}
\mathbf{\Omega} = \left\{ (\bar{x}, \bar{w}, \bar{y}) \in \mathbb{X} \times \mathbb{Y} \times \mathbb{Y} :
-K^*\bar{y} \in \partial f(\bar{x}) + \nabla h(\bar{x}),~\bar{y} \in \partial g(\bar{w}),~ K\bar{x} = \bar{w} \right\}.
\end{equation*}
Let $(\bar x, \bar w,\bar y)\in\mathbf{\Omega}$ be any saddle point. Then, for given $(x,w,y)\in\mathbb{X}\times\mathbb{Y}\times\mathbb{Y}$, we define
\begin{equation}\label{primal_dual_gap}
\mathbb{J}(x,w,y)=\mathscr{L}(x,w,y)-\mathscr{L}(\bar x, \bar w, y) = \Phi(x,w)+\langle y,\, Kx-w \rangle -\Phi(\bar x,\, \bar w).
\end{equation}
% It follows from \eqref{saddle_point} and \eqref{primal_dual_gap} that $\mathbb{J}(x,w,y)\geq0,\forall (x,w,y)\in\mathbb{X}\times\mathbb{Y}\times\mathbb{Y}$. 
We use this conventional measure for our convergence analysis because the so-called primal-dual gap function might vanish at non-stationary points, and convergence results based on this can be vacuous, see \cite{chang2022grpdarevisited, zhou2022new, chambolle2011first} for a similar discussion.
\vspace{8pt}
\begin{assumption}\label{assumption_2} 
    The proximal operators of $f$ and $g$ are ``proximal friendly"; that is, the proximal operators of $f$ and $ g$ can be evaluated efficiently.
\end{assumption}
\vspace{8pt}
Both Assumption \ref{assumption1} and Assumption \ref{assumption_2} are common in literature, see \cite{chang2021goldengrpda, chang2022goldenlinesearch,vladarean2021first}. Note that, under Assumption \ref{assumption_2}, the proximal operator of the conjugate $g^*$ can be computed using the \emph{Moreau decomposition} \cite[Theorem 6.45]{beck2017first}. We end this section with the following useful lemmas.
\vspace{8pt}
\begin{lemma}\label{UsefulLemma1} 
    \cite{bauschke2017correction} For any $a,b,c\in \mathbb{X}$ and $\alpha\in\mathbb{R}$, we have
    \begin{subequations}\label{14}
\begin{align}
    \langle a-b,\,a-c\rangle & = \dfrac{1}{2}\lVert a-b \rVert^2 + \dfrac{1}{2}\lVert a-c \rVert^2 - \dfrac{1}{2}\lVert b-c \rVert^2,\label{14a} \\
     \lVert \alpha a + (1-\alpha) b \rVert^2 & = \alpha \lVert a \rVert^2 + (1-\alpha)\lVert b \rVert^2 - \alpha(1-\alpha)\lVert a-b \rVert^2.\label{14b}
\end{align}
\end{subequations}
\end{lemma}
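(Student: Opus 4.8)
The plan is to establish both identities by direct expansion, using only the bilinearity and symmetry of the inner product on $\mathbb{X}$ together with the relation $\lVert v \rVert^2 = \langle v, v\rangle$. Both are standard polarization-type identities, so I do not anticipate any conceptual obstacle; the content is purely routine algebra, and the only care needed is sign bookkeeping.

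For \eqref{14a}, I would expand the three norms appearing on the right-hand side as $\lVert a-b\rVert^2 = \lVert a\rVert^2 - 2\langle a,b\rangle + \lVert b\rVert^2$, and analogously for $\lVert a-c\rVert^2$ and $\lVert b-c\rVert^2$. Inserting these into $\tfrac12\big(\lVert a-b\rVert^2 + \lVert a-c\rVert^2 - \lVert b-c\rVert^2\big)$, the $\lVert b\rVert^2$ and $\lVert c\rVert^2$ contributions cancel, leaving $\lVert a\rVert^2 - \langle a,b\rangle - \langle a,c\rangle + \langle b,c\rangle$. This coincides with the expansion of $\langle a-b,\,a-c\rangle = \lVert a\rVert^2 - \langle a,c\rangle - \langle a,b\rangle + \langle b,c\rangle$ obtained directly from bilinearity, which proves the identity.

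For \eqref{14b}, I would expand the left-hand side as $\alpha^2 \lVert a\rVert^2 + 2\alpha(1-\alpha)\langle a,b\rangle + (1-\alpha)^2 \lVert b\rVert^2$, then expand the right-hand side and collect the coefficients of $\lVert a\rVert^2$, $\lVert b\rVert^2$, and $\langle a,b\rangle$ separately. Using $\alpha - \alpha(1-\alpha) = \alpha^2$ and $(1-\alpha) - \alpha(1-\alpha) = (1-\alpha)^2$, the two sides agree term by term. The sole point requiring attention is the correct handling of the minus sign on $\tfrac12\lVert b-c\rVert^2$ in \eqref{14a} and of the cross term $-\alpha(1-\alpha)\lVert a-b\rVert^2$ in \eqref{14b}; beyond that there is nothing further to verify.
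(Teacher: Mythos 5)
Your proof is correct: both expansions are carried out accurately, the cancellation of the $\lVert b\rVert^2$ and $\lVert c\rVert^2$ terms in \eqref{14a} is exactly right, and the coefficient identities $\alpha-\alpha(1-\alpha)=\alpha^2$ and $(1-\alpha)-\alpha(1-\alpha)=(1-\alpha)^2$ settle \eqref{14b}. The paper itself gives no proof --- it cites these identities from the reference \cite{bauschke2017correction} --- and your direct expansion via bilinearity of the inner product is precisely the standard argument behind that citation, so there is nothing to add.
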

\vspace{5pt}
\begin{lemma}\label{usefullemma2}
    \cite{chang2022grpdarevisited} Suppose that $(p_n)$ and $(q_n)$ are two nonnegative real sequences and that there exists a natural number $n_0$ such that $p_{n+1}\leq p_n - q_n$, $\forall n\geq n_0$. Then, $\sum_{n=1}^{\infty}q_n<\infty$  and $\displaystyle \lim_{n\to\infty} p_n$ exists. 
\end{lemma}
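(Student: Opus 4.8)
The plan is to exploit the fact that the inequality $p_{n+1} \le p_n - q_n$, combined with the nonnegativity of $(q_n)$, forces $(p_n)$ to be eventually monotone, while the very same inequality telescopes to control the sum of the $q_n$. These two observations deliver the two conclusions independently.

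First I would note that, since $q_n \ge 0$, the hypothesis immediately yields $p_{n+1} \le p_n$ for every $n \ge n_0$; that is, the tail $(p_n)_{n \ge n_0}$ is non-increasing. Because each $p_n$ is nonnegative, this tail is bounded below by $0$, so by the monotone convergence theorem for real sequences it converges to some limit $p^\star \ge 0$. Since altering or ignoring finitely many initial terms does not affect the convergence of a sequence, $\lim_{n\to\infty} p_n$ exists and equals $p^\star$.

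Next I would establish summability of $(q_n)$ by telescoping. Rearranging the hypothesis gives $q_n \le p_n - p_{n+1}$ for all $n \ge n_0$. Summing this from $n = n_0$ to any $N > n_0$ collapses the right-hand side, yielding $\sum_{n=n_0}^{N} q_n \le p_{n_0} - p_{N+1} \le p_{n_0}$, where the final inequality uses $p_{N+1} \ge 0$. Hence the partial sums of the nonnegative series $\sum_{n \ge n_0} q_n$ are bounded above by the constant $p_{n_0}$, so this series converges. Adding the finite contribution $\sum_{n=1}^{n_0 - 1} q_n$, which is a sum of finitely many finite terms, we conclude $\sum_{n=1}^{\infty} q_n < \infty$.

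There is no genuine obstacle here: the statement is a routine consequence of monotone convergence together with a telescoping estimate. The only point meriting minor care is to separate the finitely many indices below $n_0$—where neither the monotonicity nor the telescoping bound is assumed to hold—from the tail, and to observe that these finitely many terms affect neither the existence of $\lim_{n\to\infty} p_n$ nor the finiteness of $\sum_{n=1}^\infty q_n$.
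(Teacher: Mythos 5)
Your proof is correct and complete: the telescoping bound $\sum_{n=n_0}^{N} q_n \le p_{n_0} - p_{N+1} \le p_{n_0}$ gives summability of $(q_n)$, and monotone convergence of the non-increasing, nonnegative tail $(p_n)_{n\ge n_0}$ gives existence of the limit, with the finitely many indices below $n_0$ correctly set aside. The paper itself offers no proof of this lemma---it is imported by citation from \cite{chang2022grpdarevisited}---and your argument is precisely the standard one for this statement, so there is nothing further to reconcile.
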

\vspace{8pt}
\begin{lemma}\label{usefullemma_3}
     For $m_1,m_2\in\mathbb{R}$, $p\geq 0$, and $q>0$, we have $\frac{pq}{p+q}(m_1+m_2)^2 \leq pm_1^2 + qm_2^2$.
\end{lemma}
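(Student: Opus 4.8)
The plan is to reduce the stated inequality to the nonnegativity of a perfect square, which is elementary but must be arranged so that no division by a possibly-vanishing quantity occurs. Since $q>0$, we have $p+q>0$, so multiplying the claimed inequality through by the strictly positive factor $p+q$ yields the equivalent inequality
\[
pq(m_1+m_2)^2 \le (p+q)(p m_1^2 + q m_2^2).
\]
Thus it suffices to show that the right-hand side minus the left-hand side is nonnegative.

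First I would expand both sides. The left-hand side becomes $pq\,m_1^2 + 2pq\,m_1 m_2 + pq\,m_2^2$, while the right-hand side becomes $p^2 m_1^2 + pq\,m_1^2 + pq\,m_2^2 + q^2 m_2^2$. Subtracting, the cross term and the mixed $pq\,m_1^2$, $pq\,m_2^2$ contributions cancel, leaving
\[
(p+q)(p m_1^2 + q m_2^2) - pq(m_1+m_2)^2 = p^2 m_1^2 - 2pq\,m_1 m_2 + q^2 m_2^2 = (p m_1 - q m_2)^2 \ge 0,
\]
which is exactly the equivalent inequality, and hence the original claim. Note that this single identity also covers the degenerate case $p=0$ automatically (both sides reduce to $q^2 m_2^2$), so no separate case analysis is required.

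An alternative route is to invoke the Cauchy--Schwarz inequality directly: for $p>0$ one writes $m_1+m_2 = \tfrac{1}{\sqrt p}(\sqrt p\,m_1) + \tfrac{1}{\sqrt q}(\sqrt q\,m_2)$ and applies $(a_1 b_1 + a_2 b_2)^2 \le (a_1^2+a_2^2)(b_1^2+b_2^2)$ with $a_i$ the reciprocal factors and $b_i$ the $\sqrt{\cdot}\,m_i$ terms, obtaining $(m_1+m_2)^2 \le \bigl(\tfrac1p+\tfrac1q\bigr)(p m_1^2 + q m_2^2)$, which rearranges to the claim; the case $p=0$ would then have to be treated separately. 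I would prefer the perfect-square expansion, since it is fully self-contained, uses nothing beyond $p+q>0$, and handles all values of $p\ge 0$ uniformly. There is no genuine obstacle here: the only point requiring care is to clear the denominator first (using $q>0$) rather than dividing, so that the argument remains valid when $p=0$.
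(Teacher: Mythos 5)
Your proof is correct. The paper states this lemma (Lemma~\ref{usefullemma_3}) without any proof or citation, so there is no argument in the paper to compare against; your clearing of the denominator followed by the perfect-square identity $(p+q)(pm_1^2+qm_2^2)-pq(m_1+m_2)^2=(pm_1-qm_2)^2\ge 0$ is the standard elementary justification, and it correctly handles the degenerate case $p=0$ without any case split, which is exactly what the lemma's hypotheses ($p\ge 0$, $q>0$) require.
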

%-----------------------------GRPDA with partially adaptive stepsize----------------------%-------------------------------------------------------------%--------------------------
\section{GRPDA with partially adaptive stepsize}\label{sec3}
In this section, we introduce a partially adaptive stepsize rule for E-GRPDA \eqref{E-GRPDA} that neither requires prior knowledge of the norm of the operator $K$ nor the Lipschitz constant $\bar L$ as inputs of the algorithm. We refer to this algorithm, which is presented in Algorithm \ref{algorithm 1}, as the Partially adaptive Golden Ratio Primal-Dual Algorithm (P-GRPDA). By ``partially adaptive,'' we indicate that the stepsize $(\tau_n)$ is constrained from increasing at any iteration.
\vspace{8pt}
\begin{assumption}\label{assumption4}
    Suppose that $f$ and $g$ are proper, convex, and lsc, and $h$ is convex and $\bar L$-smooth.
\end{assumption} 
\begin{algorithm}[H]
\caption{Partially Adaptive GRPDA (P-GRPDA) for (\ref{E-GRPDA})}
\label{algorithm 1}
\begin{algorithmic}[1]
\State \textbf{Initialization:} 
Let $x_0 \in \mathbb{X}$, $y_0 \in \mathbb{Y}$, and set $z_0 = x_0$. Choose $\beta > 0$, $0 < 2\mu' < \mu < \psi/2$, where $\psi \in (1, \phi]$. Suppose $\tau_0 > 0$ and $n = 1$.
    \State \textbf{Step 1:} \emph{Compute} 
        \begin{align}
            z_n &= \dfrac{\psi-1}{\psi}x_{n-1} + \dfrac{1}{\psi}z_{n-1} \label{eqn:39a},\\[1pt]
            x_{n} &= \operatorname{prox}_{\tau_{n-1} f}(z_n-\tau_{n-1} K^*y_{n-1}-\tau_{n-1}\nabla h(x_{n-1})). \label{eqn:39b}
            \end{align}
    \State \textbf{Step 2:} \emph{Update}
    \begin{equation}
         \begin{aligned}
            \tau_n &= \min\left\{\tau_{n-1}, ~\dfrac{\mu\|x_n-x_{n-1}\|}{\sqrt{\beta}\|Kx_n-Kx_{n-1}\|},~ \dfrac{\mu'\|x_n-x_{n-1}\|}{\|\nabla h(x_n)-\nabla h(x_{n-1})\|}\right\} ,\label{eq:39c}\\
           \sigma_n &=\beta\tau_n.
        \end{aligned}
    \end{equation}
    \State \textbf{Step 3:} \emph{Compute}    
        \begin{align}
          w_n &= \operatorname{prox}_{\frac{1}{\sigma_n}g}\Bigl(\frac{y_{n-1}}{\sigma_n} + Kx_n\Bigr)\label{eq:w_n update},\\
          y_n &= y_{n-1} + \sigma_n\bigl(Kx_n - w_n\bigr)\label{eq:y_n update}.
          %\;=\;\prox_{\sigma_n g^*}\bigl(y_{n-1} + \sigma_n Kx_n\bigr).
        \end{align}

    \State \textbf{Step 4:} Let $n \leftarrow n+1$ and return to \textbf{Step 1}.
\end{algorithmic}
\end{algorithm}
Before turning our attention to the convergence analysis, some comments regarding Algorithm \ref{algorithm 1} are in order.
\vspace{8pt}
 \begin{remark}\label{Remark_tau_n}
        Notice that using Moreau decomposition both \eqref{eq:w_n update} and \eqref{eq:y_n update} can be compactly written as $y_n=\operatorname{prox}_{\sigma_n g^*}\bigl(y_{n-1} + \sigma_n Kx_n\bigr)$. Here, $\beta>0$ is an algorithmic parameter to maintain the balance in convergence by scaling the dual steps. In (\ref{eq:39c}), when $Kx_n = Kx_{n-1}$ or $\nabla h(x_n) = \nabla h(x_{n-1})$ holds for some $n$, we adopt the convention that $1/0 = \infty$. Under this convention, the stepsize $\tau_n$ is chosen as the minimum of $\tau_{n-1}$ and the value of the last two terms in (\ref{eq:39c}) whose denominator is nonzero. Additionally, if $x_n = x_{n-1}$, we adopt the convention $\frac{0}{0}=\infty$, and set $\tau_n = \tau_{n-1}$. Moreover, when $Kx_n = Kx_{n-1}$ and $\nabla h(x_n) = \nabla h(x_{n-1})$ simultaneously hold for some $n$, the convention $\frac{1}{0} = \infty$ ensures that $\tau_n$ defaults to the previous primal stepsize $\tau_{n-1}$.
 \end{remark}
\vspace{8pt}
\begin{remark}
 From \eqref{eq:39c}, it follows that $(\tau_n)$ is non-increasing, which may slow down Algorithm \ref{algorithm 1} for a given convergence bound. However, larger $\mu$ and $\mu'$ for a given $\psi$ play a crucial role in estimating $\|K\|$ and $\bar{L}$. Therefore, in practice, the largest pair $(\mu, \mu')$ is selected for a given $\psi$. In Theorem \ref{larger_mu-and_mu'}, we extend the allowable values of $\mu$ and $\mu'$ for $1 < \psi < 1 + \sqrt{3}$.
 % and the selection of the largest pairs for a given $\psi$ is illustrated in Figure \ref{P-GRPDA-psi_and_mu_graphs}.
 \end{remark}
 %------------------------------------------------------------------
Next, we present the following proposition.
\vspace{8pt}
\begin{proposition}\label{tau_n_bdd_below}
    Let $(\tau_n)$ be the stepsize sequence generated by Algorithm \ref{algorithm 1}. Then, $(\tau_n)$ is bounded below by $\eta \coloneqq \min\left\{ \tau_0, \frac{\mu}{\sqrt{\beta}\|K\|}, \frac{\mu'}{\bar{L}} \right\}$ and $\lim_{n\to\infty}\tau_n\geq\eta>0$.
\end{proposition}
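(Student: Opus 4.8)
The plan is to prove the uniform lower bound $\tau_n \geq \eta$ by induction on $n$, and then obtain the statement about the limit directly from monotonicity. Since the first entry in the minimum defining $\tau_n$ in \eqref{eq:39c} is $\tau_{n-1}$, the sequence $(\tau_n)$ is non-increasing; once we know it is bounded below by $\eta > 0$, it is monotone and bounded, so $\lim_{n\to\infty}\tau_n$ exists and satisfies $\lim_{n\to\infty}\tau_n \geq \eta > 0$.

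The heart of the argument is to bound each of the last two ratios in \eqref{eq:39c} from below by a quantity independent of $n$. First I would invoke the linearity of $K$ together with the definition of the operator norm: since $\|Kx_n - Kx_{n-1}\| = \|K(x_n - x_{n-1})\| \leq \|K\|\,\|x_n - x_{n-1}\|$, whenever $x_n \neq x_{n-1}$ and $Kx_n \neq Kx_{n-1}$ we obtain
\[
\frac{\mu\|x_n - x_{n-1}\|}{\sqrt{\beta}\,\|Kx_n - Kx_{n-1}\|} \geq \frac{\mu}{\sqrt{\beta}\,\|K\|}.
\]
Analogously, the $\bar L$-smoothness of $h$ from Assumption \ref{assumption4} yields $\|\nabla h(x_n) - \nabla h(x_{n-1})\| \leq \bar L\,\|x_n - x_{n-1}\|$, so the third ratio obeys
\[
\frac{\mu'\|x_n - x_{n-1}\|}{\|\nabla h(x_n) - \nabla h(x_{n-1})\|} \geq \frac{\mu'}{\bar L}.
\]

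With these estimates, the induction closes immediately. The base case $\tau_0 \geq \eta$ is part of the definition of $\eta$. For the inductive step, assuming $\tau_{n-1} \geq \eta$, each of the three quantities inside the minimum in \eqref{eq:39c} is at least $\eta$: the first because $\tau_{n-1} \geq \eta$, and the remaining two by the displayed inequalities combined with $\tfrac{\mu}{\sqrt{\beta}\|K\|} \geq \eta$ and $\tfrac{\mu'}{\bar L} \geq \eta$, both of which hold by the definition $\eta = \min\{\tau_0, \tfrac{\mu}{\sqrt{\beta}\|K\|}, \tfrac{\mu'}{\bar L}\}$. Hence $\tau_n \geq \eta$.

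The only genuinely delicate point, which I expect to be the main obstacle, is the careful handling of the degenerate cases in which one or both denominators in \eqref{eq:39c} vanish, since the bounds above were derived assuming nonzero denominators. Following the conventions of Remark \ref{Remark_tau_n}, whenever $Kx_n = Kx_{n-1}$ (respectively $\nabla h(x_n) = \nabla h(x_{n-1})$) the corresponding ratio is read as $+\infty$, and likewise $\tfrac{0}{0} = \infty$ when $x_n = x_{n-1}$. In each such case the affected term trivially exceeds $\eta$ and may be dropped from the minimum without affecting the bound, so $\tau_n = \min\{\dots\} \geq \eta$ persists regardless of which denominators are nonzero. This confirms that every branch of the minimum respects the lower bound $\eta$ and completes the induction.
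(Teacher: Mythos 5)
Your proposal is correct and follows essentially the same argument as the paper: an induction using $\|K(x_n-x_{n-1})\|\le\|K\|\,\|x_n-x_{n-1}\|$ and the $\bar L$-smoothness of $h$ to bound the two adaptive ratios below, combined with the conventions of Remark \ref{Remark_tau_n} for vanishing denominators, and then monotone convergence for the limit statement. The paper's proof is the same induction (stated with base case $n=1$ rather than $n=0$), so there is nothing substantive to distinguish.
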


\begin{proof}
We use induction to prove our first claim. First note that, from the definition of $\tau_n$ in \eqref{eq:39c}, we have $\tau_n\leq\tau_{n-1},~\forall n\geq1$. Noting the conventions described in Remark~\ref{Remark_tau_n}, the definition of $\|K\|$ together with Lipschitz continuity of $\nabla h$ yields the following inequality for all $n\geq 1$

\begin{equation}\label{inequlaity_1}
    \begin{aligned}
    \tau_{n} &= \min\left\{\tau_{n-1}, \dfrac{\mu\|x_n-x_{n-1}\|}{\sqrt{\beta}\|Kx_n-Kx_{n-1}\|}, \dfrac{\mu'\|x_n-x_{n-1}\|}{\|\nabla h(x_n)-\nabla h(x_{n-1})\|}\right\}\\
    &\geq\min\left\{ \tau_{n-1}, \frac{\mu}{\sqrt{\beta} \|K\|}, \frac{\mu'}{\bar{L}} \right\}.
\end{aligned}
\end{equation}
Now, for $n=1$, from \eqref{inequlaity_1}, we have $\tau_1 \geq\eta$, where $\eta = \min\left\{ \tau_0, \frac{\mu}{\sqrt{\beta} \|K\|}, \frac{\mu'}{\bar{L}} \right\}$. Suppose $\tau_n\geq\eta$ holds for some $n$. Then, by induction and \eqref{inequlaity_1}, we have 
\begin{equation*}
    \begin{aligned}
    \tau_{n+1}&\geq\min\left\{ \tau_n, \frac{\mu}{\sqrt{\beta} \|K\|}, \frac{\mu'}{\bar{L}} \right\} \geq \eta.
\end{aligned}
\end{equation*}

Therefore, we conclude the first claim of our proof.
For the second claim, note that the sequence $(\tau_n)$ is non-increasing and bounded below by $\eta>0$. Consequently, this implies that $(\tau_n)$ is convergent and $\lim_{n \to \infty} \tau_n = \tau \geq \eta > 0$ for some $\tau>0$.
\end{proof}

%--------------------------------Convergence analysis-------------------------------------%--------------------------------------------------%--------------------------------------
\subsection{Convergence analysis} 
In this section, we establish the convergence of Algorithm \ref{algorithm 1}.
\vspace{.1cm}
\begin{lemma}\label{Lemma4.1.1}
    Let $(\bar x , \bar w, \bar y)\in\mathbf{\Omega}$. Under Assumptions \ref{assumption1}, \ref{assumption_2} and \ref{assumption4}, let $\{(z_n,x_n,w_n,y_n,\tau_n)\}$ be the sequence generated by Algorithm \ref{algorithm 1}. Then, for any $y\in\mathbb{Y}$, there exists a natural number $n_2$ such that, $\forall n\geq n_2$, the following holds
   \begin{multline}\label{lemma4.1.1.1}
        2\tau_n\mathbb{J}(x_n,w_n, y) + \frac{\psi}{\psi-1}\lVert \bar x-z_{n+2} \rVert^2 
        + \frac{1}{\beta}\lVert y-y_{n} \rVert^2 
        + \mu'\lVert x_n-x_{n+1} \rVert^2 \\
        \leq \frac{\psi}{\psi-1}\lVert \bar x-z_{n+1} \rVert^2 
        + \frac{1}{\beta}\lVert \bar y-y_{n-1} \rVert^2
        + \mu'\lVert x_n-x_{n-1} \rVert^2 - \psi\bar \theta_n\lVert x_n-z_{n+1} \rVert^2,
    \end{multline}
where $\bar\theta_n = \dfrac{\tau_n}{\tau_{n-1}}$.
\end{lemma}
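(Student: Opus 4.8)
The plan is to establish \eqref{lemma4.1.1.1} as a one-step energy inequality: bound $2\tau_n\mathbb{J}(x_n,w_n,y)$ from above by combining the variational characterisations of the two proximal subproblems with the convexity/smoothness of $h$, and then convert every inner product into squared norms so that the right-hand side collapses into the stated energy. First I would apply Lemma~\ref{usefullemma3} to the primal step \eqref{eqn:39b} with test point $\bar x$, obtaining an upper bound on $\tau_{n-1}(f(x_n)-f(\bar x))$ by $\langle x_n-z_n,\bar x-x_n\rangle$ together with the coupling terms $\tau_{n-1}\langle y_{n-1},K(\bar x-x_n)\rangle$ and $\tau_{n-1}\langle\nabla h(x_{n-1}),\bar x-x_n\rangle$. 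Applying the same lemma to the dual step \eqref{eq:y_n update} with test point $\bar w$ and using $Kx_n-w_n=\tfrac1{\sigma_n}(y_n-y_{n-1})$ gives $g(w_n)-g(\bar w)\le\langle y_n,w_n-\bar w\rangle$. Adding the $h$-residual via the three-point estimate $h(x_n)-h(\bar x)\le\langle\nabla h(x_{n-1}),x_n-\bar x\rangle+\langle\nabla h(x_n)-\nabla h(x_{n-1}),x_n-x_{n-1}\rangle$ (itself a consequence of convexity at $x_{n-1}$) cancels the gradient coupling in the primal bound and leaves the single error $\langle\nabla h(x_n)-\nabla h(x_{n-1}),x_n-x_{n-1}\rangle$.

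This error is exactly what the third branch of \eqref{eq:39c} controls: since $\tau_n\|\nabla h(x_n)-\nabla h(x_{n-1})\|\le\mu'\|x_n-x_{n-1}\|$, Cauchy--Schwarz turns the rescaled error into a multiple of $\mu'\|x_n-x_{n-1}\|^2$, which is what feeds the memory terms $\mu'\|x_n-x_{n-1}\|^2$ and $\mu'\|x_n-x_{n+1}\|^2$. To match the coefficient $2\tau_n$ required by $\mathbb{J}$, I would rescale the primal inequality by $2\tau_n/\tau_{n-1}=2\bar\theta_n$; this is the origin of the factor $\bar\theta_n$ in the final term $-\psi\bar\theta_n\|x_n-z_{n+1}\|^2$. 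The golden-ratio recursion \eqref{eqn:39a}, in the form $z_{n+1}-x_n=\tfrac1\psi(z_n-x_n)$, lets me replace $x_n-z_n$ by $\psi(x_n-z_{n+1})$, after which the polarisation identity \eqref{14a} and the convex-combination identity \eqref{14b} convert the coupling $\langle x_n-z_n,\bar x-x_n\rangle$ into $\tfrac{\psi}{\psi-1}$-weighted squared distances of the $z$-iterates to $\bar x$ together with the residual $\|x_n-z_{n+1}\|^2$; the two-step shift producing $z_{n+2}$ and the appearance of $x_{n+1}$ come from applying the recursion once more.

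For the dual part I would use \eqref{14a} on $\langle y-y_n,y_n-y_{n-1}\rangle$ to generate the $\tfrac1\beta$-weighted terms $\|y-y_n\|^2$, $\|y-y_{n-1}\|^2$ and the negative term $-\tfrac1\beta\|y_n-y_{n-1}\|^2$, after multiplying by $2\tau_n$ and using $\sigma_n=\beta\tau_n$. The crux of the argument, and the step I expect to be hardest, is the mixed term $2\tau_n\langle y_n-y_{n-1},K(x_n-\bar x)\rangle$ that survives after pairing the primal coupling (carrying $y_{n-1}$) against the dual coupling (carrying $y_n$), a direct consequence of the Gauss--Seidel ordering. Here the second branch of \eqref{eq:39c}, $\tau_n\|Kx_n-Kx_{n-1}\|\le\tfrac{\mu}{\sqrt\beta}\|x_n-x_{n-1}\|$, together with Young's inequality, splits this term into a piece of the form $\tfrac{\mu}{\beta}\|y_n-y_{n-1}\|^2$ — absorbed by the negative dual term precisely because $\mu<\psi/2\le\phi/2<1$ — and $\|x\|^2$-pieces that merge into the $\mu'$ memory terms. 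Because $K(x_n-\bar x)$ is not a consecutive increment, reconciling it requires pairing across consecutive iterates through the golden-ratio coupling, which is the structural reason $x_{n+1}$ and $z_{n+2}$ enter the energy.

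Collecting everything, the right-hand side reduces to the claimed energy provided the leftover coefficients of $\|x_n-z_{n+1}\|^2$, $\|x_n-x_{n-1}\|^2$ and $\|y_n-y_{n-1}\|^2$ are nonnegative after absorption. Each is an explicit function of $\psi,\mu,\mu',\beta$ and the ratio $\bar\theta_n$, and the standing constraint $0<2\mu'<\mu<\psi/2$ makes each strictly positive in the limit $\bar\theta_n\to1$. By Proposition~\ref{tau_n_bdd_below} the sequence $(\tau_n)$ is non-increasing with a positive limit, so $\bar\theta_n\to1$ and there is an index $n_2$ beyond which all of these inequalities hold simultaneously; this is precisely the origin of the qualifier ``there exists $n_2$'' in the statement. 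The principal difficulty is therefore the careful Young's-inequality budgeting of the mixed term, ensuring the two adaptive branches of \eqref{eq:39c} supply just enough to dominate the error terms while leaving the telescoping memory $\mu'\|x_n-x_{n+1}\|^2$ and the shifted $z$-energy intact.
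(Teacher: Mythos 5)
Your outline assembles many of the right ingredients (the prox characterisation of Lemma~\ref{usefullemma3}, convexity of $h$, Young's inequality fed by the two adaptive branches of \eqref{eq:39c}, the golden-ratio identity behind \eqref{derived_equality2}, and the limit argument via Proposition~\ref{tau_n_bdd_below} that produces $n_2$), but the core structure has a genuine gap: you invoke only \emph{one} primal prox inequality, the one defining $x_n$ tested at $\bar x$. With that choice the dual--primal mismatch necessarily comes out as $2\tau_n\langle y_n-y_{n-1},\,K(x_n-\bar x)\rangle$, exactly as you note, and this term cannot be controlled the way you propose. The second branch of \eqref{eq:39c} only bounds $\tau_n\lVert K(x_n-x_{n-1})\rVert$ by $\tfrac{\mu}{\sqrt\beta}\lVert x_n-x_{n-1}\rVert$, i.e.\ $K$ along a \emph{consecutive increment}; it says nothing about $\lVert K(x_n-\bar x)\rVert$, and after Young's inequality there is no term of the form $-\lVert x_n-\bar x\rVert^2$ in the energy (nor any knowledge of $\lVert K\rVert$) that could absorb the resulting piece, nor does the term telescope. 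This is precisely the Arrow--Hurwicz obstruction that the golden-ratio construction is designed to circumvent; your sentence about ``pairing across consecutive iterates through the golden-ratio coupling'' names the difficulty but does not resolve it, and within your setup it cannot be resolved. A related symptom: $x_{n+1}$ and $z_{n+2}$ appear in the target inequality \eqref{lemma4.1.1.1}, yet no inequality in your argument involves $x_{n+1}$ at all, so ``applying the recursion once more'' cannot legitimately introduce them.

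The paper's proof avoids ever creating the bad term by using \emph{two} primal prox inequalities: the one defining $x_{n+1}$ (stepsize $\tau_n$) tested at $\bar x$, as in \eqref{3.2.1}, and the one defining $x_n$ (stepsize $\tau_{n-1}$) tested at $x_{n+1}$, as in \eqref{3.2.3}, the latter rescaled by $\bar\theta_n=\tau_n/\tau_{n-1}$ --- this, not a rescaling of the $\bar x$-tested inequality, is the true origin of the factor $\bar\theta_n$ in $-\psi\bar\theta_n\lVert x_n-z_{n+1}\rVert^2$. Adding the two (plus the $g$-inequality multiplied by $\tau_n$) still produces $f(x_n)-f(\bar x)$ on the left, but the coupling now collapses, via $K\bar x=\bar w$, to $\langle y_{n-1}-y_n,\,K(x_{n+1}-x_n)\rangle+\langle y_n,\,w_n-Kx_n\rangle$: every appearance of $K$ is against the consecutive increment $x_{n+1}-x_n$, which branch two of \eqref{eq:39c}, applied at index $n+1$ with the correction factor $\tau_n/\tau_{n+1}$, bounds by $\tfrac{\mu\tau_n^2}{2\tau_{n+1}^2}\lVert x_n-x_{n+1}\rVert^2+\tfrac{\mu}{2\beta}\lVert y_n-y_{n-1}\rVert^2$, feeding the limit condition \eqref{lemma4.1_6}. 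This two-inequality device is also what makes $x_{n+1}$, and hence $z_{n+2}$, enter the energy with the exact coefficient $\tfrac{\psi}{\psi-1}$; your proposed rescaling of the $\bar x$-tested inequality by $2\bar\theta_n$ would instead attach a varying factor $\bar\theta_n$ to the telescoping $z$-terms and the claimed inequality would not come out. To repair your proof you would need to restructure it along these lines; your remaining budgeting (the $\mu'$ memory terms, the absorption $\tfrac1\beta-\tfrac\mu\beta>0$, and the existence of $n_2$) then goes through essentially as you describe.
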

\begin{proof}
Using (\ref{eqn:39b}), (\ref{eq:w_n update}), \eqref{eq:y_n update}, and Lemma \ref{usefullemma3}, we obtain
\begin{equation}\label{3.2.1}
\begin{aligned}
\tau_{n}\big(f(x_{n+1})-f(\bar x)\big)
&\le \left\langle x_{n+1}-z_{n+1}+\tau_n K^*y_n+\tau_n\nabla h(x_n),\,\bar x-x_{n+1}\right\rangle,\\
g(w_n)-g(\bar w) &\le \langle y_n, \,w_n-\,\bar w\rangle.
\end{aligned}
\end{equation}
Similarly, since $x_n-z_n=\psi(x_n-z_{n+1})$,
\begin{equation}\label{3.2.3}
\begin{aligned}
\tau_{n-1}\big(f(x_n)-f(x_{n+1})\big)
&\le \left\langle \psi(x_n-z_{n+1})+\tau_{n-1}K^*y_{n-1}+\tau_{n-1}\nabla h(x_{n-1}),\,x_{n+1}-x_n\right\rangle .
\end{aligned}
\end{equation}
Multiplying the second inequality in \eqref{3.2.1} by $\tau_n$ and \eqref{3.2.3} by
$\bar\theta_n=\dfrac{\tau_n}{\tau_{n-1}}$, and adding them to the first inequality in \eqref{3.2.1}, we get
\begin{multline}\label{3.2.5}
\tau_n\left(f(x_n)-f(\bar x)+g(w_n)-g(\bar w)\right)
\le \langle x_{n+1}-z_{n+1}, \,\bar x-x_{n+1}\rangle\\
+ \psi\bar\theta_n\langle x_n-z_{n+1},\,x_{n+1}-x_n\rangle
+ \tau_n\langle K^*y_{n-1}, \,x_{n+1}-x_n\rangle
+ \tau_n\langle K^*y_n,\,\bar x-x_{n+1}\rangle\\
+ \tau_n\langle y_n,\,w_n-\bar w\rangle
+ \tau_n\langle \nabla h(x_{n-1}),\,x_{n+1}-x_n\rangle
+ \tau_n\langle \nabla h(x_n),\,\bar x-x_{n+1}\rangle.
\end{multline}
Note that following $K\bar x = \bar w$, we have
\begin{align}\label{eq:Ky-collect}
&\langle K^*y_{n-1},\,x_{n+1}-x_n\rangle
  + \langle K^*y_n,\,\bar x-x_{n+1}\rangle
  + \langle y_n,\,w_n-\bar w\rangle \nonumber\\
&= \langle y_{n-1},\,K(x_{n+1}-x_n)\rangle
   + \langle y_n,\,-Kx_{n+1}+w_n\rangle\nonumber\\
&= \langle y_{n-1},\,K(x_{n+1}-x_n)\rangle
   - \langle y_n,\,K(x_{n+1}-x_n)\rangle
   + \langle y_n,\,-Kx_n+w_n\rangle \nonumber\\
&= \langle y_{n-1}-y_n,\,K(x_{n+1}-x_n)\rangle
   + \langle y_n,\,-Kx_n+w_n\rangle .
\end{align}
Furthermore, the last two terms of \eqref{3.2.5} can be expressed as
\begin{align}\label{eq:grad-collect}
&\;\langle \nabla h(x_{n-1}),\,x_{n+1}-x_n\rangle
  + \langle \nabla h(x_n),\,\bar x-x_{n+1}\rangle \nonumber\\
&= \langle \nabla h(x_n)-\nabla h(x_{n-1}),\,x_n-x_{n+1}\rangle
   + \langle \nabla h(x_n),\,\bar x-x_n\rangle .
\end{align}
Substituting \eqref{eq:Ky-collect}–\eqref{eq:grad-collect} into \eqref{3.2.5} yields
\begin{multline}\label{3.2.5_new}
\tau_n\!\left(f(x_n)-f(\bar x)+g(w_n)-g(\bar w)\right)
\le \langle x_{n+1}-z_{n+1},\,\bar x-x_{n+1}\rangle\\
+ \psi\bar\theta_n\langle x_n-z_{n+1},\,x_{n+1}-x_n\rangle
+ \tau_n\langle y_n,\,-Kx_n+w_n\rangle
+ \tau_n\langle y_{n-1}-y_n,\,K(x_{n+1}-x_n)\rangle\\
+ \tau_n\langle \nabla h(x_n)-\nabla h(x_{n-1}),\,x_n-x_{n+1}\rangle
+ \tau_n\langle \nabla h(x_n),\,\bar x-x_n\rangle.
\end{multline}
% Adding $\tau_n\langle y,\,Kx_n-w_n\rangle$ to both sides of \eqref{3.2.5_new} gives
% \begin{multline}\label{3.2.5_new_1}
% \tau_n\!\left(f(x_n)-f(\bar x)+\langle y, Kx_n-w_n\rangle+g(w_n)-g(\bar w)\right)
% \le \langle x_{n+1}-z_{n+1},\,\bar x-x_{n+1}\rangle\\
% + \psi\bar\theta_n\langle x_n-z_{n+1},\,x_{n+1}-x_n\rangle
% + \tau_n\langle y_{n-1}-y_n,\,K(x_{n+1}-x_n)\rangle
% + \tau_n\langle y-y_n,\,Kx_n-w_n\rangle\\
% + \tau_n\langle \nabla h(x_n)-\nabla h(x_{n-1}),\,x_n-x_{n+1}\rangle
% + \tau_n\langle \nabla h(x_n),\,\bar x-x_n\rangle.
% \end{multline}
Moreover, from the $y$–update \eqref{eq:y_n update} with $\sigma_n=\beta\tau_n$, we have
\begin{equation}\label{eq:y-id}
\tau_n\langle y,\,Kx_n-w_n\rangle=\frac{1}{\beta}\langle y-y_n,\,y_n-y_{n-1}\rangle +\tau_n\langle y_n ,\, Kx_n-w_n\rangle.
\end{equation}
By adding \eqref{eq:y-id} to \eqref{3.2.5_new}, we obtain
\begin{multline}\label{lemma4.1.0}
\tau_n\!\left(f(x_n)-f(\bar x)+\langle y, Kx_n-w_n\rangle+g(w_n)-g(\bar w)\right)
\le \langle x_{n+1}-z_{n+1},\,\bar x-x_{n+1}\rangle\\
+ \psi\bar\theta_n\langle x_n-z_{n+1},\,x_{n+1}-x_n\rangle
+ \frac{1}{\beta}\langle y-y_n,\,y_n-y_{n-1}\rangle
+ \tau_n\langle y_n-y_{n-1},\, K(x_{n+1}-x_n)\rangle\\
+ \tau_n\langle \nabla h(x_n)-\nabla h(x_{n-1}),\,x_n-x_{n+1}\rangle
+ \tau_n\langle \nabla h(x_n),\,\bar x-x_n\rangle.
\end{multline}
By convexity of $h$, we have
$$
\tau_n\langle \nabla h(x_n),\,\bar x-x_n\rangle\leq \tau_n\left(h(\bar x)-h(x_n)\right).
$$
By combining the previous inequality with \eqref{lemma4.1.0} and using the definition of $\mathbb{J}$ in~\eqref{primal_dual_gap}, we have
\begin{multline}\label{lemma4.1_.1}
\tau_n\,\mathbb{J}(x_n,w_n,y)
\le \langle x_{n+1}-z_{n+1},\,\bar x-x_{n+1}\rangle
+ \psi\bar\theta_n\langle x_n-z_{n+1},\,x_{n+1}-x_n\rangle\\
+ \frac{1}{\beta}\langle y-y_n, \, y_n-y_{n-1}\rangle
+ \tau_n\langle y_n-y_{n-1},\, K(x_{n+1}-x_n)\rangle\\
+ \tau_n\langle \nabla h(x_n)-\nabla h(x_{n-1}),\,x_n-x_{n+1}\rangle .
\end{multline}
Furthermore, by applying~\eqref{14a} to the first three terms on the RHS of~\eqref{lemma4.1_.1}, we obtain
 \begin{multline}\label{lemma4.1_2}
     \tau_n\mathbb{J}(x_n,w_n, y)+  \dfrac{1}{2}\lVert \bar x-x_{n+1} \rVert^2 + \dfrac{1}{2\beta}\lVert y-y_{n} \rVert^2\leq \dfrac{1}{2}\lVert \bar x-z_{n+1} \rVert^2+ \dfrac{1}{2\beta}\lVert y-y_{n-1} \rVert^2 \\ 
     -\dfrac{1}{2}\lVert x_{n+1}-z_{n+1} \rVert^2
    - \dfrac{\psi\bar\theta_n}{2}\lVert x_n-z_{n+1} \rVert^2-\dfrac{\psi\bar\theta_n}{2}\lVert x_n-x_{n+1} \rVert^2\\
    + \dfrac{\psi\bar\theta_n}{2}\lVert x_{n+1}-z_{n+1} \rVert^2 -\dfrac{1}{2\beta}\lVert y_n-y_{n-1} \rVert^2  + \tau_n\langle y_n-y_{n-1},\, K(x_{n+1}-x_n)\rangle\\ +
        \tau_n\langle \nabla h(x_n)-\nabla h(x_{n-1}),\, x_n-x_{n+1}\rangle.
 \end{multline}
Now, using the update in~\eqref{eq:39c} along with the Cauchy--Schwarz inequality, we have
 \begin{equation}\label{lemma4.1_3}
     \begin{aligned}
        \tau_n\langle \nabla h(x_n)-\nabla h(x_{n-1}), x_n-x_{n+1}\rangle&\leq
         \tau_n\lVert \nabla h(x_n)-\nabla h(x_{n-1}) \rVert \lVert x_n-x_{n+1} \rVert \\ & 
         \leq{\mu'}\lVert  x_n - x_{n-1} \rVert \lVert x_n-x_{n+1} \rVert\\ &
          \leq\dfrac{\mu'}{2}\lVert  x_n - x_{n-1} \rVert^2 + \dfrac{\mu'}{2}\lVert x_n-x_{n+1} \rVert^2.
     \end{aligned}
 \end{equation}
Similarly, from (\ref{eq:39c}), we obtain 

\begin{equation}\label{lemma4.1_4}
    \begin{aligned}
       \tau_n\langle y_n-y_{n-1},\, K(x_{n+1}-x_n)\rangle&\leq  \tau_n \lVert Kx_n-Kx_{n+1} \rVert\lVert y_n-y_{n-1} \rVert \\ & 
        \leq \frac{\tau_n}{\tau_{n+1}}\tau_{n+1}\lVert Kx_n-Kx_{n+1} \rVert\lVert y_n-y_{n-1} \rVert\\&
        \leq\frac{\tau_n}{\tau_{n+1}}\frac{\mu}{\sqrt{\beta}}\lVert x_n-x_{n+1} \rVert\lVert y_n-y_{n-1} \rVert\\&
        \leq\dfrac{\mu\tau_n^2}{2\tau_{n+1}^2}\lVert x_n-x_{n+1} \rVert^2 +\dfrac{\mu}{2\beta}\lVert y_n-y_{n-1} \rVert^2.
    \end{aligned}
\end{equation}
Also, from equation (\ref{eqn:39a}), see that
\begin{equation}\label{derived_equality2}
    \lVert \bar x-x_{n+1} \rVert^2= \dfrac{\psi}{\psi-1}\lVert \bar x-z_{n+2} \rVert^2 - \dfrac{1}{\psi-1}\lVert \bar x-z_{n+1} \rVert^2+ \dfrac{1}{\psi}\lVert x_{n+1}-z_{n+1} \rVert^2.
    \end{equation}
Combining (\ref{lemma4.1_2}), (\ref{lemma4.1_3}), (\ref{lemma4.1_4}) and (\ref{derived_equality2}), we get
     \begin{multline}\label{lemma4.1_5}
        2\tau_n\mathbb{J}(x_n,w_n, y) + \frac{\psi}{\psi-1}\lVert \bar x-z_{n+2} \rVert^2 + \frac{1}{\beta}\lVert y-y_n \rVert^2 
        \leq \frac{\psi}{\psi-1}\lVert \bar x-z_{n+1} \rVert^2 + \frac{1}{\beta}\lVert y-y_{n-1} \rVert^2 \\ + \left(\psi\bar\theta_n - 1 - \frac{1}{\psi}\right)\lVert x_{n+1}-z_{n+1} \rVert^2 
        - \psi\bar\theta_n\lVert x_n-z_{n+1} \rVert^2 + \mu'\lVert x_n-x_{n-1} \rVert^2 \\ - \left(\psi\bar\theta_n - \mu\frac{\tau_n^2}{\tau_{n+1}^2} - \mu'\right)\lVert x_n-x_{n+1} \rVert^2 - \left(\frac{1}{\beta}- \frac{\mu}{\beta}\right)\lVert y_n-y_{n-1} \rVert^2.
    \end{multline}
Noting the fact that $(\tau_n)$ is non-increasing, we have 
\begin{equation}\label{psi_ineq}
    \psi\bar\theta_n - 1 - \frac{1}{\psi} \leq \psi - 1 - \frac{1}{\psi}\leq 0, ~~\forall \psi\in(1,\phi].
\end{equation}
From Proposition \ref{tau_n_bdd_below}, observe that $\lim_{n\to\infty}\left(\psi\bar\theta_n-\mu\frac{\tau_n^2}{\tau_{n+1}^2}- \mu'\right)=\psi-\mu-\mu'> 2\mu-\mu-\mu'=\mu-\mu'$. So, there exists a natural number $n_2$ such that
\begin{equation}\label{lemma4.1_6}
    \psi\bar\theta_n-\mu\frac{\tau_n^2}{\tau_{n+1}^2}- \mu'> \mu-\mu',~\forall n\geq n_2.
\end{equation}
Furthermore, observe that $\mu<\dfrac{\psi}{2}<1\implies\left(\dfrac{1}{\beta}-\dfrac{\mu}{\beta}\right)>0$. Thus, by combining (\ref{lemma4.1_5}), (\ref{psi_ineq}) and (\ref{lemma4.1_6}), $\forall n \geq n_2$, we obtain
\begin{multline}\label{lemma4.1_7}
     2\tau_n\mathbb{J}(x_n,w_n,y)+ \dfrac{\psi}{\psi-1}\lVert \bar x-z_{n+2} \rVert^2 + \dfrac{1}{\beta}\lVert y-y_{n} \rVert^2 + (\mu-\mu')\lVert x_n-x_{n+1} \rVert^2\\ \leq \dfrac{\psi}{\psi-1}\lVert \bar x-z_{n+1} \rVert^2 + \dfrac{1}{\beta}\lVert y-y_{n-1} \rVert^2 + {\mu'}\lVert x_n-x_{n-1} \rVert^2 - {\psi\bar\theta_n}\lVert x_n-z_{n+1} \rVert^2.
\end{multline}
By using the fact that $\mu-\mu' > \mu'$ in~\eqref{lemma4.1_7}, we conclude the proof of Lemma~\ref{Lemma4.1.1}.

\end{proof}
\begin{theorem}\label{thm_3.1.1}
   Let $(\bar x, \bar w, \bar y)\in\Omega$. Under Assumptions \ref{assumption1}, \ref{assumption_2} and \ref{assumption4}, let $\{(z_n,x_n,w_n,y_n,\tau_n)\}$ be the sequence generated by Algorithm \ref{algorithm 1}. Then $\{(x_n,y_n)\}$ converges to a solution of (\ref{1.2}).
\end{theorem}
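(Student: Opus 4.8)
The plan is to convert the one–step estimate into a Fej\'er–type monotonicity statement and then run an Opial argument. First I would fix a saddle point $(\bar x,\bar w,\bar y)\in\mathbf\Omega$ and set the free dual variable to $y=\bar y$. Since $K\bar x=\bar w$, the definition \eqref{primal_dual_gap} gives $\mathbb J(x_n,w_n,\bar y)=\mathscr L(x_n,w_n,\bar y)-\mathscr L(\bar x,\bar w,\bar y)\ge0$ by the left saddle–point inequality, so this term is harmless. Working from the estimate \eqref{lemma4.1_5} obtained \emph{inside} the proof of Lemma~\ref{Lemma4.1.1} (i.e.\ before the nonnegative residuals are discarded) and setting
\[
p_n:=\frac{\psi}{\psi-1}\lVert\bar x-z_{n+1}\rVert^2+\frac1\beta\lVert\bar y-y_{n-1}\rVert^2+\mu'\lVert x_n-x_{n-1}\rVert^2,
\]
I would rearrange \eqref{lemma4.1_5} into $p_{n+1}\le p_n-q_n$ for all $n\ge n_2$, where, after discarding the term $(\psi\bar\theta_n-1-\tfrac1\psi)\lVert x_{n+1}-z_{n+1}\rVert^2\le0$ from \eqref{psi_ineq} and using the lower bound \eqref{lemma4.1_6} on the coefficient of $\lVert x_n-x_{n+1}\rVert^2$,
\begin{multline*}
q_n=2\tau_n\mathbb J(x_n,w_n,\bar y)+\psi\bar\theta_n\lVert x_n-z_{n+1}\rVert^2\\
+(\mu-2\mu')\lVert x_n-x_{n+1}\rVert^2+\tfrac{1-\mu}{\beta}\lVert y_n-y_{n-1}\rVert^2\ge0.
\end{multline*}
Here $\mu-2\mu'>0$ and $1-\mu>0$ hold by the parameter choice $0<2\mu'<\mu<\psi/2\le1$, and the threshold $n_2$ arises only from the stepsize ratios, hence is \emph{independent of the chosen saddle point}.

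By Lemma~\ref{usefullemma2}, $\lim_n p_n$ exists and $\sum_n q_n<\infty$, so each summand of $q_n$ vanishes. Using Proposition~\ref{tau_n_bdd_below} (whence $\tau_n\to\tau>0$ and $\bar\theta_n=\tau_n/\tau_{n-1}\to1$) this yields $\lVert x_n-z_{n+1}\rVert\to0$, $\lVert x_{n+1}-x_n\rVert\to0$, $\lVert y_n-y_{n-1}\rVert\to0$, and $\mathbb J(x_n,w_n,\bar y)\to0$. From \eqref{eqn:39a} one has the identities $x_n-z_{n+1}=\tfrac1\psi(x_n-z_n)$ and $z_{n+1}-z_n=\tfrac{\psi-1}{\psi}(x_n-z_n)$, so $\lVert x_n-z_n\rVert\to0$ and $\lVert z_{n+1}-z_n\rVert\to0$ as well; in particular $x_n,z_n,z_{n+1}$ share the same cluster points, and $w_n-Kx_n=\tfrac1{\sigma_n}(y_{n-1}-y_n)\to0$. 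Boundedness of $p_n$ makes $(z_n)$, $(y_n)$ and hence $(x_n)$ bounded.

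Next I would identify the cluster points. Extracting a subsequence with $(x_{n_k},y_{n_k})\to(x^*,y^*)$, the residual limits give $x_{n_k-1}\to x^*$, $z_{n_k}\to x^*$, $y_{n_k-1}\to y^*$, $w_{n_k}\to Kx^*$. Applying Lemma~\ref{usefullemma3} to \eqref{eqn:39b} and to the Moreau form $y_n=\operatorname{prox}_{\sigma_n g^*}(y_{n-1}+\sigma_nKx_n)$ of \eqref{eq:w_n update}–\eqref{eq:y_n update} produces
\[
\frac{z_n-x_n}{\tau_{n-1}}-K^*y_{n-1}-\nabla h(x_{n-1})\in\partial f(x_n),\qquad
\frac{y_{n-1}-y_n}{\sigma_n}+Kx_n\in\partial g^*(y_n).
\]
Passing to the limit along $(n_k)$ through the closed graphs of $\partial f$ and $\partial g^*$, using continuity of $\nabla h,K,K^*$ and $\tau,\sigma>0$, I obtain $-K^*y^*\in\partial f(x^*)+\nabla h(x^*)$ and $Kx^*\in\partial g^*(y^*)$, the latter being equivalent to $y^*\in\partial g(Kx^*)$. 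Setting $w^*=Kx^*$, these are precisely the relations defining $\mathbf\Omega$, so $(x^*,w^*,y^*)\in\mathbf\Omega$ and $(x^*,y^*)$ solves \eqref{1.2}.

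Finally I would upgrade subsequential to full convergence by an Opial argument on $\mathbb X\times\mathbb Y$ equipped with $\langle(u,v),(u',v')\rangle_*=\tfrac{\psi}{\psi-1}\langle u,u'\rangle+\tfrac1\beta\langle v,v'\rangle$, for which $\lVert(z_{n+1},y_{n-1})-(\bar x,\bar y)\rVert_*^2=p_n-\mu'\lVert x_n-x_{n-1}\rVert^2$. Since $\mu'\lVert x_n-x_{n-1}\rVert^2\to0$ and $\lim_n p_n$ exists for \emph{every} saddle point (with the uniform $n_2$), the quantity $\lVert(z_{n+1},y_{n-1})-s\rVert_*$ converges for each solution $s$, while the previous paragraph shows every cluster point of $(z_{n+1},y_{n-1})$ is a solution. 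The standard two–cluster–point computation in $\langle\cdot,\cdot\rangle_*$ then forces uniqueness of the cluster point, so $(z_{n+1},y_{n-1})$ converges to some solution $(x^*,y^*)$; because $\lVert x_n-z_{n+1}\rVert\to0$ and $\lVert y_n-y_{n-1}\rVert\to0$, the iterates $(x_n,y_n)$ converge to $(x^*,y^*)$ as well. I expect the passage to the limit in the two subdifferential inclusions to be the main obstacle, as it is the one place where positivity of $\lim\tau_n$, the vanishing of the residuals $x_n-z_n$ and $y_n-y_{n-1}$, and the closedness of the graphs of $\partial f$ and $\partial g^*$ must all be used simultaneously.
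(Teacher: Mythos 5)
Your proof is correct, and its skeleton is the same as the paper's: the same Lyapunov quantity $p_n$, the same one-step estimate extracted from the proof of Lemma~\ref{Lemma4.1.1} via \eqref{lemma4.1_5}, \eqref{psi_ineq} and \eqref{lemma4.1_6}, Lemma~\ref{usefullemma2} to obtain $\lim_n p_n$ and vanishing residuals, prox-characterizations to identify cluster points, and a Fej\'er-type uniqueness step at the end (the paper's final move---plugging the cluster point $(\tilde x,\tilde y)$ back into $p_n$ and letting $p_{n_k}(\tilde y)\to 0$ propagate to the whole sequence---is exactly the concrete instance of your Opial argument).

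The one substantive difference is that you rearrange \eqref{lemma4.1_5} so that $q_n$ retains the nonnegative terms $(\mu-2\mu')\lVert x_n-x_{n+1}\rVert^2$ and $\tfrac{1-\mu}{\beta}\lVert y_n-y_{n-1}\rVert^2$, whereas Lemma~\ref{Lemma4.1.1} discards them and the paper's proof of Theorem~\ref{thm_3.1.1} works only with $q_n=\psi\bar\theta_n\lVert x_n-z_{n+1}\rVert^2$. This is more than cosmetic. Your version yields $\lVert y_n-y_{n-1}\rVert\to 0$ directly, which is exactly what is needed to get $y_{n_k-1}\to y^*$ and hence to pass to the limit in the dual inclusion $\frac{y_{n-1}-y_n}{\sigma_n}+Kx_n\in\partial g^*(y_n)$; the paper, by contrast, only knows $y_{n_k}\to\tilde y$ along its subsequence, yet both its limit passage in the second prox inequality (which involves $y-y_{n_k-1}$) and its claim $p_{n_k}(\tilde y)\to 0$ implicitly require $y_{n_k-1}\to\tilde y$. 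So your bookkeeping closes a small gap that the paper leaves tacit, at no extra cost. The remaining differences---closed graphs of $\partial f$ and $\partial g^*$ in place of lower semicontinuity of $f$ and $g^*$, and the weighted-inner-product Opial finish in place of the paper's direct computation---are equivalent ways of saying the same thing.
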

\begin{proof}
   Since $(\bar x, \bar w, \bar y)$ is a saddle point, it follows from \eqref{primal_dual_gap} that $2\tau_n\mathbb{J}(x_n,w_n,\bar y)\geq0$ $\forall n$. Now, by Lemma \ref{Lemma4.1.1}, there exists a natural number $n_2$ such that, $\forall n\geq n_2$, we have 
   $$p_{n+1}(y)\leq p_n(y)- q_n,$$
   where 
\begin{equation}\label{Theorem4.1_1}
    \begin{aligned}
     p_{n}(y) & \coloneqq  \dfrac{\psi}{\psi-1}\lVert \bar x-z_{n+1} \rVert^2+\dfrac{1}{\beta}\lVert y-y_{n-1} \rVert^2+{\mu'}\lVert x_n-x_{n-1} \rVert^2 ,\\ 
    q_n & \coloneqq {\psi\bar\theta_n}\lVert x_n-z_{n+1} \rVert^2.
    \end{aligned}
\end{equation}
From Lemma \ref{usefullemma2}, we have $\lim_{n\to\infty} p_n(y)\in \mathbb{R}$ and $\lim_{n\to\infty} q_n =0$. Consequently $\lim_{n\to\infty}{\psi\bar\theta_n}\lVert x_n-z_{n+1} \rVert^2=0$. Furthermore, in Proposition \ref{tau_n_bdd_below}, following the fact that $\lim_{n\to\infty}\tau_n = \tau\geq\eta$, we obtain
\begin{equation}\label{Theorem4.1_2}
   \lim_{n\to\infty}\lVert x_n-z_{n+1} \rVert^2=\lim_{n\to\infty}\dfrac{1}{\psi^2}\lVert x_n-z_{n} \rVert^2=0.
\end{equation}
Moreover, using \eqref{Theorem4.1_2} and  the triangle inequality, we get
\begin{equation*}
    \lim_{n\to\infty}\lVert x_n-x_{n-1} \rVert^2 =0. 
\end{equation*}
Combining the fact that $\lim_{n\to\infty}p_n(y)$ is finite with (\ref{Theorem4.1_2}), we obtain that all the sequences $\{x_n\}$, $\{y_n\}$ and $\{z_n\}$ are bounded. Now, suppose that $(\Tilde x, \Tilde y)$ is a cluster point of $\{(x_n, y_n)\}$, and $\{(x_{n_{k}}, y_{n_{k}})\}$ is a subsequence of $\{(x_n, y_n)\}$ that converges to $(\Tilde x, \Tilde y)$, i.e., $\lim_{n\to\infty}x_{n_{k+1}}=\Tilde x$ and $\lim_{n\to\infty}y_{n_{k}}=\Tilde y$. Then, from equation (\ref{Theorem4.1_2}), we have $\lim_{n\to\infty}z_{n_{k+1}}=\Tilde x$. Furthermore, using~\eqref{eqn:39b}, \eqref{eq:w_n update}, \eqref{eq:y_n update}, \eqref{3.2.1}, \eqref{3.2.3}, Lemma~\ref{usefullemma3} and Remark~\ref{Remark_tau_n}, it follows that for all $(x,y)\in\mathbb{X}\times\mathbb{Y}$
\begin{equation*}
    \begin{aligned}
         \langle x_{n_{k}} - z_{n_{k}} + \tau_{n_{k}-1} K^* y_{n_{k}-1} + \tau_{n_{k}-1} \nabla h(x_{n_{k}-1}), \,x - x_{n_{k}} \rangle 
        &\geq\tau_{n_{k}-1} (f(x_{n_{k}}) - f(x)),\\
        \left\langle \frac{1}{\beta} (y_{n_{k}} - y_{n_{k}-1}) + \tau_{n_{k}} K x_{n_{k}}, \,y - y_{n_{k}-1}\right\rangle
        &\geq\tau_{n_{k}} (g^*(y_{n_{k}}) - g^*(y)).
    \end{aligned}
\end{equation*}
Recalling that both $f,g^*$ are lower-semi continuous and letting $k\to\infty$, we derive
 \begin{equation}\label{theorem4.1_5}
     \langle K^*\Tilde y+\nabla h(\Tilde x),\, x-\Tilde x \rangle\geq f(\Tilde x)-f(x)~~ \text{and}~~\langle K\Tilde x, \,y-\tilde y \rangle\geq g^*(\Tilde y)-g^*(y).
 \end{equation}
Both the inequalities in (\ref{theorem4.1_5}) imply that $(\Tilde{x},\Tilde{y})$ is a solution of (\ref{1.2}). Notice that Lemma~\ref{Lemma4.1.1} holds for any saddle point $(\bar x, \bar w, \bar y)\in\Omega$. Thus putting $\bar x= \Tilde{x}$ and $\bar y=\Tilde{y}$ in (\ref{Theorem4.1_1}, we obtain $\lim_{n\to\infty}p_{n_{k}}(\Tilde y)=0$. Furthermore, $\lim_{n\to\infty}p_{n}(\Tilde y)$ exists, and hence $\lim_{n\to\infty}p_{n}(\Tilde y)=0$. So, we have $\lim_{n\to\infty}z_n=\Tilde{x}$ and $\lim_{n\to\infty}y_n=\Tilde{y}$. Again, by observing (\ref{Theorem4.1_2}), we have $\lim_{n\to\infty}x_n=\Tilde{x}$. This completes the proof.
\end{proof}
\begin{remark}
    Since $(\tau_n)$ is a non-increasing sequence, if $ \tau_{0}\leq\min\left\{\dfrac{\mu}{\sqrt{\beta}\|K\|},\dfrac{\mu'}{\bar L}\right\}$, then $\tau_n=\tau_0$, $\forall n$, i.e., $(\tau_n)$ is constant. Let $\tau_n=\tau_0=\tau$. Then for $ \psi\in(1,\phi]$, we have $\tau\sigma\lVert K \rVert^2+2\tau \bar L=\beta\tau^2\lVert K \rVert^2+2\tau\bar L\leq\mu^2+2\mu'<\frac{\psi^2}{4}+\frac{\psi}{2}<\psi.$
    Thus, we get the stepsize condition required for the global convergence of extended GRPDA \cite{zhou2022new} with fixed stepsize.
\end{remark}
%-------------------------------Extended convergence region of P-GRPDA--------------------%-------------------------------------------------------%---------------------------------
\subsection{Extended convergence region of P-GRPDA}
In \cite[Theorem 2.1]{chang2022grpdarevisited}, the authors showed that the upper bound of the region of convergence for GRPDA \eqref{GRPDA} can be extended from the Golden Ratio $\phi$ to $1+\sqrt{3}$. Furthermore, they established the convergence of GRPDA under the following stepsize condition: $\tau\sigma\|K\|^2 < \frac{\psi(2+2\psi-\psi^2)}{\psi+1}$, where $\psi \in (1,1+\sqrt{3})$. This stepsize is more relaxed compared to those required for both GRPDA \eqref{GRPDA} and the Chambolle–Pock primal-dual algorithm \eqref{1.2.2}. In the following theorem, we also extend the convergence region of Algorithm \ref{algorithm 1} from $(1, \phi]$ to $(1, 1+\sqrt{3})$.
\vspace{8pt}
\begin{theorem}\label{larger_mu-and_mu'}
     Under Assumptions \ref{assumption1}, \ref{assumption_2}, and \ref{assumption4}, let the sequence $\{(z_n,x_n,w_n,y_n,\tau_n)\}$ be generated by Algorithm \ref{algorithm 1}, where the parameters $\mu,\mu'$ satisfy 
     \begin{equation}\label{extension_on_mu}
         0<3\mu'<\mu<\frac{\psi}{2}+\frac{\psi(1+\psi-\psi^2)}{2(\psi+1)},~~\psi\in(1,1+\sqrt{3}).
     \end{equation}
Then, $\{(x_n,y_n)\}$ converges to a solution of (\ref{1.2}).
\end{theorem}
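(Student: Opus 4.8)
The plan is to repeat the analysis of Lemma~\ref{Lemma4.1.1} and Theorem~\ref{thm_3.1.1} essentially verbatim, changing only the treatment of the single term that forced the restriction $\psi\in(1,\phi]$: the quantity $\bigl(\psi\bar\theta_n-1-\tfrac{1}{\psi}\bigr)\lVert x_{n+1}-z_{n+1}\rVert^2$ on the right-hand side of \eqref{lemma4.1_5}. For $\psi\in(1,\phi]$ its coefficient is nonpositive by \eqref{psi_ineq} and is simply discarded. For $\psi\in(\phi,1+\sqrt3)$ the limiting coefficient $P:=\psi-1-\tfrac{1}{\psi}=\tfrac{\psi^2-\psi-1}{\psi}$ is strictly positive, so this term must instead be redistributed onto the two negative quantities already present in \eqref{lemma4.1_5}, namely $-\psi\bar\theta_n\lVert x_n-z_{n+1}\rVert^2$ and $-\bigl(\psi\bar\theta_n-\mu\tfrac{\tau_n^2}{\tau_{n+1}^2}-\mu'\bigr)\lVert x_n-x_{n+1}\rVert^2$. (For $\psi\le\phi$ one has $P\le0$ and the statement reduces to Theorem~\ref{thm_3.1.1}, so the genuinely new range is $(\phi,1+\sqrt3)$.)

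The key maneuver is to write $x_{n+1}-z_{n+1}=(x_{n+1}-x_n)+(x_n-z_{n+1})$ and apply the weighted Young inequality $\lVert a+b\rVert^2\le(1+\epsilon)\lVert a\rVert^2+(1+\tfrac1\epsilon)\lVert b\rVert^2$ with a free parameter $\epsilon>0$, giving, with $C_n:=\psi\bar\theta_n-1-\tfrac{1}{\psi}$,
\[
\Bigl(\psi\bar\theta_n-1-\tfrac{1}{\psi}\Bigr)\lVert x_{n+1}-z_{n+1}\rVert^2
\le C_n(1+\epsilon)\lVert x_{n+1}-x_n\rVert^2
+ C_n\Bigl(1+\tfrac{1}{\epsilon}\Bigr)\lVert x_n-z_{n+1}\rVert^2.
\]
Substituting this into \eqref{lemma4.1_5} produces an inequality of exactly the shape of \eqref{lemma4.1_7}, but now the coefficient of $\lVert x_n-z_{n+1}\rVert^2$ is $C_n(1+\tfrac1\epsilon)-\psi\bar\theta_n$ and that of $\lVert x_n-x_{n+1}\rVert^2$ is $C_n(1+\epsilon)-\bigl(\psi\bar\theta_n-\mu\tfrac{\tau_n^2}{\tau_{n+1}^2}-\mu'\bigr)$.

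The crux is to choose a single $\epsilon$ making both coefficients negative for all large $n$, with the second one at most $-\mu'$ so that the carried Lyapunov term $\mu'\lVert x_{n+1}-x_n\rVert^2$ of $p_{n+1}$ is still dominated (as $\mu-\mu'>\mu'$ did in the original proof). By Proposition~\ref{tau_n_bdd_below} we have $\bar\theta_n\to1$ and $\tfrac{\tau_n^2}{\tau_{n+1}^2}\to1$, hence $C_n\to P$, so these two requirements become, in the limit, $P\bigl(1+\tfrac1\epsilon\bigr)<\psi$ and $P(1+\epsilon)<\psi-\mu-2\mu'$; a valid $\epsilon$ exists iff $\tfrac{\psi P}{\psi+1}<\tfrac{\psi-\mu-2\mu'-P}{P}$. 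Using $\psi-P=\tfrac{\psi+1}{\psi}$ and $\psi P=\psi^2-\psi-1$, a short difference-of-squares computation rewrites this as
\[
\mu+2\mu' < \frac{\psi+1}{\psi}-\frac{\psi P^2}{\psi+1}
= \frac{(\psi+1)^2-(\psi^2-\psi-1)^2}{\psi(\psi+1)}
= \frac{\psi\,(2+2\psi-\psi^2)}{\psi+1},
\]
which is exactly where the endpoint $1+\sqrt3$ enters, since the right-hand side is positive precisely for $\psi<1+\sqrt3$. The hypotheses $3\mu'<\mu<\tfrac{\psi(2+2\psi-\psi^2)}{2(\psi+1)}$ give $\mu+2\mu'<\tfrac{5}{3}\mu<\tfrac{5}{6}\cdot\tfrac{\psi(2+2\psi-\psi^2)}{\psi+1}$, so the required inequality holds with room to spare; moreover this bound forces $\mu<1$ throughout the range (one checks $\psi^3-2\psi^2+2>0$ on $(1,1+\sqrt3)$), keeping the coefficient $\tfrac{1-\mu}{\beta}$ of $\lVert y_n-y_{n-1}\rVert^2$ nonnegative as before.

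Once this modified descent inequality $p_{n+1}(y)\le p_n(y)-q_n$ for all $n\ge n_2$ is in hand, with the same $p_n,q_n$ as in \eqref{Theorem4.1_1} (and $q_n$ now carrying the strictly positive limiting factor $\psi-P(1+\tfrac1\epsilon)$), the remainder is identical to Theorem~\ref{thm_3.1.1}: Lemma~\ref{usefullemma2} yields $\lim_n p_n(y)\in\mathbb{R}$ and $q_n\to0$, whence $\lVert x_n-z_{n+1}\rVert\to0$ and $\lVert x_n-x_{n-1}\rVert\to0$, boundedness produces a cluster point $(\tilde x,\tilde y)$ satisfying the optimality inclusions \eqref{theorem4.1_5}, and evaluating the saddle-point-independent Lyapunov function at this point upgrades subsequential convergence to convergence of the whole sequence. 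I expect the only delicate point to be the bookkeeping in absorbing the split term: one must verify that $\epsilon$ can be fixed once and for all and that the finitely many early indices—where $C_n$ may be nonpositive (harmless, the term is dropped) or where $\bar\theta_n,\tfrac{\tau_n}{\tau_{n+1}}$ have not yet settled near their limits—cause no loss, which is precisely the role played by the threshold $n_2$ in \eqref{lemma4.1_6}.
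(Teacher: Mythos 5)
Your treatment of the genuinely supercritical range $\psi\in(\phi,1+\sqrt3)$ is correct and is, in substance, the paper's own argument. The paper applies Lemma~\ref{usefullemma_3} with $p=\frac{\psi\bar\theta_n(\psi^2\bar\theta_n-\psi-1)}{\psi+1}$, $q=\psi\bar\theta_n$, $m_1=\lVert x_{n+1}-x_n\rVert$, $m_2=\lVert x_n-z_{n+1}\rVert$, which is exactly your weighted Young step with the particular $n$-dependent choice $\epsilon_n=C_n/(\psi\bar\theta_n-C_n)=\tfrac{\psi C_n}{\psi+1}$ that annihilates the coefficient of $\lVert x_n-z_{n+1}\rVert^2$; you instead fix $\epsilon$ once and keep strict margins on both coefficients. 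Your difference-of-squares computation identifying $\frac{\psi(2+2\psi-\psi^2)}{\psi+1}$, hence the endpoint $1+\sqrt3$, agrees with the paper's hypothesis, since $\frac{\psi}{2}+\frac{\psi(1+\psi-\psi^2)}{2(\psi+1)}=\frac{\psi(2+2\psi-\psi^2)}{2(\psi+1)}$. A side benefit of your variant is that $q_n$ retains a strictly negative coefficient on $\lVert x_n-z_{n+1}\rVert^2$, so Theorem~\ref{thm_3.1.1} can be quoted verbatim; the paper's exact absorption leaves only $q_n=\mu'\lVert x_n-x_{n+1}\rVert^2$ and needs the additional recursion \eqref{combine_region_4} to recover $\lim_{n\to\infty}\lVert x_n-z_n\rVert=0$.

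There is, however, a genuine gap in your parenthetical claim that for $\psi\le\phi$ ``the statement reduces to Theorem~\ref{thm_3.1.1}.'' It does not: Theorem~\ref{larger_mu-and_mu'} also enlarges the admissible range of $\mu$ on $(1,\phi)$, because $1+\psi-\psi^2>0$ there, so the bound $\frac{\psi}{2}+\frac{\psi(1+\psi-\psi^2)}{2(\psi+1)}$ is strictly larger than $\frac{\psi}{2}$, whereas Theorem~\ref{thm_3.1.1} and its proof cover only $\mu<\psi/2$ (inequality \eqref{lemma4.1_6} rests on $\psi-\mu-\mu'>2\mu-\mu-\mu'$, i.e.\ $2\mu<\psi$). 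Concretely, take $\psi=1.1$, $\mu=0.78$, $\mu'=0.25$: hypothesis \eqref{extension_on_mu} holds (the upper bound is $\approx 0.7831$), yet $\mu>\psi/2$ and $\mu+2\mu'=1.28>\psi$, so after merely discarding the nonpositive term $C_n\lVert x_{n+1}-z_{n+1}\rVert^2$ the limiting coefficient $-(\psi-\mu-\mu')=-0.07$ of $\lVert x_n-x_{n+1}\rVert^2$ cannot dominate the carried Lyapunov term $\mu'\lVert x_{n+1}-x_n\rVert^2$, and the descent inequality fails. Nor can your redistribution be used in this regime: when $C_n\le 0$, multiplying the Young inequality by $C_n$ reverses it, so your displayed estimate is false there. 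The paper closes this case by running Lemma~\ref{usefullemma_3} in the opposite direction --- with $p=1+\tfrac1\psi-\psi\bar\theta_n\ge0$, $q=\psi\bar\theta_n$, $m_1=\lVert x_{n+1}-z_{n+1}\rVert$, $m_2=\lVert x_n-z_{n+1}\rVert$ --- converting the two nonnegative quantities $p\,m_1^2$ and $q\,m_2^2$ into the dominating negative contribution $-\frac{\psi\bar\theta_n(1+\psi-\psi^2\bar\theta_n)}{\psi+1}\lVert x_{n+1}-x_n\rVert^2$; this extra negativity is precisely what accommodates $\mu\ge\psi/2$. As written, your proof establishes the theorem on $(\phi,1+\sqrt3)$ in full, but on $(1,\phi)$ only under the stronger restriction $\mu<\psi/2$.
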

\begin{proof}
  To prove this theorem, we proceed with the calculations as outlined in Lemma~\ref{Lemma4.1.1} up to the inequality \eqref{lemma4.1_5}. From this point, we claim the following inequality holds
      \begin{equation}\label{psi_in_(1,phi]}
        \frac{\psi\bar\theta_n(1+\psi-\psi^2\bar\theta_n)}{\psi+1}\|x_{n+1}-x_n\|^2 \leq \left(1 +\frac{1}{\psi}-\psi\bar\theta_n\right)\|x_{n+1}-z_{n+1}\|^2 + \psi\bar\theta_n \|x_n-z_{n+1}\|^2.
    \end{equation}
To show this, we distinguish two cases based on the value of $\psi$: (i) $\psi \in (1, \phi]$ and (ii) $\psi \in (\phi, 1 + \sqrt{3})$. In the first case, using the fact that $(\tau_n)$ is non-increasing, we have $1 +\frac{1}{\psi}-\psi\bar\theta_n\geq 1 +\frac{1}{\psi}-\psi\geq0, ~\forall \psi\in(1,\phi]$. Now, by taking $ p = 1 + \frac{1}{\psi} - \psi \bar{\theta}_n$, $q = \psi \bar{\theta}_n $, $m_1 = \|x_{n+1} - z_{n+1}\|$ and $ m_2 = \|x_n - z_{n+1}\|$ in Lemma \ref{usefullemma_3}, and by noting that $\|x_{n+1} - x_n\|^2 \leq (m_1 + m_2)^2$, we obtain \eqref{psi_in_(1,phi]}. Before proving the second case, notice that the extended bounds on $\mu,\mu'$ and $\psi$ in \eqref{extension_on_mu} do not change the result in Proposition~\ref{tau_n_bdd_below}, which indicates that $(\tau_n)$ is bounded below by $\eta$ and that $\lim_{n\to\infty}\tau_n\geq \eta>0$, $\forall \psi\in(1,1+\sqrt{3})$. Thus, for the second case $\psi\in(\phi, 1+\sqrt{3})$, notice that
\begin{equation*}
\lim_{n\to\infty}\frac{\psi\bar\theta_n(\psi^2\bar\theta_n-\psi-1)}{\psi+1} = \frac{\psi(\psi^2-\psi-1)}{\psi+1}>0.
\end{equation*}
Hence, there exists a natural number $k_3$ such that $\frac{\psi\bar\theta_n(\psi^2\bar\theta_n-\psi-1)}{\psi+1}>0, \forall n\geq k_3$. Again, by setting $p= \frac{\psi\bar\theta_n(\psi^2\bar\theta_n-\psi-1)}{\psi+1}, q = \psi\bar\theta_n$ with $m_1 = \|x_{n+1}-x_n\|$ and $m_2 = \|x_n - z_{n+1}\|$ in Lemma \ref{usefullemma_3}, and by noting the facts that $\|x_{n+1}-z_{n+1}\|^2\leq (m_1+m_2)^2$, and $\frac{pq}{p+q} = \frac{\psi^2\bar\theta_n-\psi-1}{\psi}$, we obtain 
\begin{align}\label{case_2}
    \frac{\psi^2\bar\theta_n-\psi-1}{\psi}\|x_{n+1} - z_{n+1}\|^2 \leq \frac{\psi\bar\theta_n(\psi^2\bar\theta_n-\psi-1)}{\psi+1}\|x_{n+1}-x_n\|^2 + \psi\bar\theta_n\|x_n-z_{n+1}\|^2.
\end{align}
Rearranging \eqref{case_2} gives us \eqref{psi_in_(1,phi]}. Therefore, $\forall n\geq k_3$, substituting inequality~\eqref{psi_in_(1,phi]} into~\eqref{lemma4.1_5}, and from \eqref{extension_on_mu}, observing that $0<\mu<1~~\forall\psi\in(1,1+\sqrt{3})$, we obtain
     \begin{multline}\label{combine_region_1}
        2\tau_n\mathbb{J}(x_n,w_n, y) + \frac{\psi}{\psi-1}\lVert \bar x-z_{n+2} \rVert^2 + \frac{1}{\beta}\lVert y-y_n \rVert^2 
        \leq \frac{\psi}{\psi-1}\lVert \bar x-z_{n+1} \rVert^2 + \frac{1}{\beta}\lVert y-y_{n-1} \rVert^2 \\ -\left(\frac{\psi\bar\theta_n(1+\psi-\psi^2\bar\theta_n)}{\psi+1} + \psi\bar\theta_n - \mu\frac{\tau_n^2}{\tau_{n+1}^2} - \mu'\right)\|x_{n+1}-x_n\|^2 + \mu'\lVert x_n-x_{n-1} \rVert^2.
    \end{multline}
Recalling that Proposition \ref{tau_n_bdd_below} holds for $\psi, \mu, \mu'>0$, and hence, we deduce $\lim_{n\to\infty}\bar\theta_n =1$. Thus, from \eqref{extension_on_mu} and Proposition \ref{tau_n_bdd_below}, we have

\begin{equation*}
\begin{aligned}
\lim_{n\to\infty}\left(\frac{\psi\bar\theta_n(1+\psi-\psi^2\bar\theta_n)}{\psi+1} + \psi\bar\theta_n - \mu\frac{\tau_n^2}{\tau_{n+1}^2} - \mu'\right)& = \frac{\psi(1+\psi-\psi^2)}{\psi+1} + \psi- \mu - \mu'\\ &
    >2\mu-\mu-\mu'\\&
    = \mu-\mu'>2\mu'~~\text{as $3\mu'<\mu$}.
\end{aligned}
\end{equation*}
Therefore, there exists a natural number $k_4$ such that,  $\forall n\geq k_4$
\begin{equation}\label{combine_region_2}
    \frac{\psi\bar\theta_n(1+\psi-\psi^2\bar\theta_n)}{\psi+1} + \psi\bar\theta_n - \mu\frac{\tau_n^2}{\tau_{n+1}^2} - \mu'>2\mu'.
\end{equation}
Let $k_5 = \max\{k_3,k_4\}$. Then, $\forall n\geq k_5$, combining \eqref{combine_region_1} and \eqref{combine_region_2}, we get
\begin{multline}\label{combine_region_3}
        2\tau_n\mathbb{J}(x_n,w_n, y) + \frac{\psi}{\psi-1}\lVert \bar x-z_{n+2} \rVert^2 + \frac{1}{\beta}\lVert y-y_n \rVert^2 + \mu'\|x_{n+1}-x_n\|^2\\
        \leq \frac{\psi}{\psi-1}\lVert \bar x-z_{n+1} \rVert^2 + \frac{1}{\beta}\lVert y-y_{n-1} \rVert^2  + \mu'\lVert x_n-x_{n-1} \rVert^2 - \mu'\|x_n-x_{n+1}\|^2.
    \end{multline}
Since $(\bar x, \bar w, \bar y)$ is a saddle point of $\mathscr{L}$, it follows from \eqref{primal_dual_gap} that $2\tau_n\mathbb{J}(x_n,w_n,\bar y)\geq0$ $\forall n$. Furthermore, inequality (\ref{combine_region_3}) can be written as $p_{n+1}(y)\leq p_n(y)- q_n$, $\forall n\geq k_5$, where 
\begin{equation}\label{p_n_q_n}
    \begin{aligned}
    & p_{n}(y) \coloneqq \dfrac{\psi}{\psi-1}\lVert \bar x-z_{n+1} \rVert^2+\dfrac{1}{\beta}\lVert y-y_{n-1} \rVert^2+{\mu'}\lVert x_n-x_{n-1} \rVert^2 ,\\ &
    q_n \coloneqq \mu'\lVert x_n-x_{n+1} \rVert^2.
    \end{aligned}
\end{equation}
From \eqref{combine_region_3}, \eqref{p_n_q_n}, and using Lemma \ref{usefullemma2}, we obtain $\sum_{n=1}^{\infty}\|x_n - x_{n-1}\|^2< \infty$. To complete the proof, it remains to show that $\lim_{n \to \infty} \|x_n - z_n\| = 0$. Once this is established, the rest of the proof follows from Theorem \ref{thm_3.1.1}. By using \eqref{eqn:39a}, we have
\begin{equation}\label{combine_region_4}
\begin{aligned}
     \|x_n-z_n\| & \leq \frac{\psi-1}{\psi}\|x_n-x_{n-1}\| +\frac{1}{\psi}\|x_n-z_{n-1}\| \\ &
     \leq \frac{\psi-1}{\psi}\|x_n-x_{n-1}\| +\frac{1}{\psi}\|x_n-x_{n-1}\|+\frac{1}{\psi}\|x_{n-1}-z_{n-1}\| \\ &
     = \frac{1}{\psi}\|x_{n-1}-z_{n-1}\| + \|x_n-x_{n-1}\|.
\end{aligned}
\end{equation}
Again, using the facts $\sum_{n=1}^{\infty}\|x_n - x_{n-1}\|^2 < \infty$ and $\frac{1}{\psi}<1~~\forall\psi\in(1, 1+\sqrt{3})$, it follows from \eqref{combine_region_4} that $\sum_{n=1}^{\infty}\|x_n - z_n\|^2 < \infty$. Consequently, we have $\lim_{n \to \infty} \|x_n - z_n\| = 0$. Hence, the proof is concluded.
\end{proof}
%-------------------------Convergence rate -----------------------------------------------
%-----------------------------------------%-----------------------------------------------
\section{Convergence rate}\label{sec4}
In this section, we establish the ergodic sublinear convergence rate results of Algorithm~\ref{algorithm 1}, measured by the function value residual and constraint violation. To facilitate this, for $N\geq 1$, we define 
\begin{equation}\label{x_ntilde and w_ntilde}
    \Tilde{x}_N\coloneqq \dfrac{1}{N}\sum_{n=1}^{N} x_n,~~~ \Tilde{w}_N\coloneqq\dfrac{1}{N}\sum_{n=1}^{N} w_n.
\end{equation}
\begin{theorem}\label{thm_4.0.1}(\textbf{Sublinear rate of convergence})
    Under Assumptions \ref{assumption1}, \ref{assumption_2} and~\ref{assumption4}, let $\{(z_n,x_n,w_n,y_n,\tau_n)\}$ be the sequence generated by Algorithm \ref{algorithm 1}, and suppose that $(\bar x, \bar w,\bar y)\in\Omega$. Then, there exists a natural number $n_2$ and a constant $P_1>0$ such that the following holds
    \begin{equation*}
       \left | \Phi(\Tilde{x}_N,\, \Tilde{w}_N)- \Phi(\bar x,\, \bar w)\right| \leq \frac{P_1}{N}\hspace{.17cm}and \hspace{.17cm}\left | K\Tilde{x}_N- \Tilde{w}_N \right|\leq \frac{2P_1}{bN},
    \end{equation*}
where $N\geq 1$ and $b$ is a positive constant satisfying $b\geq 2\|\bar y\|$.
\end{theorem}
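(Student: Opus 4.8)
The plan is to derive both estimates from a single ergodic bound on the supremum of $\mathbb{J}(\tilde{x}_N,\tilde{w}_N,\cdot)$ over a ball around $\bar y$, and then to separate the objective residual from the constraint violation by exploiting the saddle-point inequality $\mathbb{J}(\tilde{x}_N,\tilde{w}_N,\bar y)\ge 0$. First I would observe that Lemma \ref{Lemma4.1.1}, written with the notation $p_n(y)$ of \eqref{Theorem4.1_1}, is exactly $2\tau_n\mathbb{J}(x_n,w_n,y)+p_{n+1}(y)\le p_n(y)-q_n$ for all $n\ge n_2$; dropping the nonnegative term $q_n=\psi\bar\theta_n\|x_n-z_{n+1}\|^2$ and telescoping from $n_2$ to $N$ gives $\sum_{n=n_2}^{N}2\tau_n\mathbb{J}(x_n,w_n,y)\le p_{n_2}(y)$.

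Next I would convert this weighted, truncated sum into an ergodic bound for the uniform average $\tilde{x}_N=\tfrac1N\sum_{n=1}^N x_n$. Because $\tau_n\in[\eta,\tau_0]$ with $\eta>0$ (Proposition \ref{tau_n_bdd_below}) and all iterates are bounded (Theorem \ref{thm_3.1.1}), a summation-by-parts argument — using the monotonicity of $(\tau_n)$ and the boundedness of $p_n(y)$ — bounds the \emph{unweighted} tail $\sum_{n=n_2}^{N}\mathbb{J}(x_n,w_n,y)$ by a constant depending on $y$ only through $\|y-y_{n_2-1}\|^2$ and a linear term, hence bounded uniformly on $\{y:\|y-\bar y\|\le b\}$. (Alternatively one passes to the $\tau_n$-weighted ergodic average and rewrites it as a uniform average.) The finitely many initial terms $n<n_2$ are absorbed by convexity: writing $\tilde{x}_N$ as a convex combination of the average of the first $n_2-1$ iterates and the tail average, Jensen applied to $\mathbb{J}(\cdot,\cdot,y)$ shows their contribution is $O(1/N)$ with a constant again finite over the ball. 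Taking the supremum over $\|y-\bar y\|\le b$ and calling the resulting constant $P_1$, I obtain $M_N:=\sup_{\|y-\bar y\|\le b}\mathbb{J}(\tilde{x}_N,\tilde{w}_N,y)\le P_1/N$.

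Finally I would unpack $M_N$. Writing $G_N:=\Phi(\tilde{x}_N,\tilde{w}_N)-\Phi(\bar x,\bar w)$ and $r_N:=K\tilde{x}_N-\tilde{w}_N$, one has $\mathbb{J}(\tilde{x}_N,\tilde{w}_N,y)=G_N+\langle y,r_N\rangle$, so the supremum over the ball is $M_N=G_N+\langle\bar y,r_N\rangle+b\|r_N\|$. The saddle-point property gives $\mathbb{J}(\tilde{x}_N,\tilde{w}_N,\bar y)=G_N+\langle\bar y,r_N\rangle\ge 0$, whence $b\|r_N\|\le M_N\le P_1/N$ and therefore $\|r_N\|\le P_1/(bN)\le 2P_1/(bN)$. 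For the objective, the upper bound follows from $G_N=M_N-\langle\bar y,r_N\rangle-b\|r_N\|\le M_N-(b-\|\bar y\|)\|r_N\|\le P_1/N$, while the lower bound uses $G_N\ge-\langle\bar y,r_N\rangle\ge-\|\bar y\|\,\|r_N\|\ge-\tfrac{b}{2}\cdot\tfrac{P_1}{bN}=-P_1/(2N)$, where the hypothesis $b\ge 2\|\bar y\|$ is exactly what is needed. Combining yields $|G_N|\le P_1/N$.

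The main obstacle is the second step: because $\mathbb{J}(x_n,w_n,y)$ is not sign-definite for $y\ne\bar y$, one cannot simply lower-bound $\sum\tau_n\mathbb{J}(x_n,w_n,y)$ by $\eta\sum\mathbb{J}(x_n,w_n,y)$, so the non-increasing but non-constant stepsize must be handled by summation by parts (or by a weighted ergodic average), and the truncation at $n_2$ must be repaired via the convexity splitting above. Verifying that the various constants coalesce into a single $P_1$ valid for both displayed inequalities, and that the radius $b\ge 2\|\bar y\|$ is precisely what closes the objective lower bound, is the remaining bookkeeping.
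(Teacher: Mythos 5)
Your overall architecture is the same as the paper's: telescope Lemma~\ref{Lemma4.1.1} in the form $2\tau_n\mathbb{J}(x_n,w_n,y)\le p_n(y)-p_{n+1}(y)$ for $n\ge n_2$, pass to the ergodic point by Jensen, take a supremum of the affine function $y\mapsto \mathbb{J}(\tilde x_N,\tilde w_N,y)$ over a dual ball, and then separate the objective residual from the constraint violation using $\mathbb{J}(\tilde x_N,\tilde w_N,\bar y)\ge 0$ together with $b\ge 2\|\bar y\|$. Your endgame is correct (it even yields the sharper bound $\|K\tilde x_N-\tilde w_N\|\le P_1/(bN)$), and your convexity splitting for the head terms $n<n_2$ is a legitimate way to handle the truncation (the paper instead implicitly averages over the shifted window $n_2,\dots,n_2+N-1$). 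Moreover, the obstacle you flag is real: since $\mathbb{J}(x_n,w_n,y)$ need not be nonnegative for $y\neq\bar y$, one cannot pass from $\sum_n\tau_n\mathbb{J}_n$ to $\eta\sum_n\mathbb{J}_n$. It is worth knowing that the paper's own proof performs exactly this substitution (``$\tau_n\ge\eta$, hence $2\eta\mathbb{J}\le p_n-p_{n+1}$''), so you have identified a step the published argument glosses over rather than resolves.

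The gap is that your repair of this step is not complete as stated. Writing $c_n=1/(2\tau_n)$ and $T_m:=\sum_{n=n_2}^m 2\tau_n\mathbb{J}_n$, summation by parts gives $\sum_{n=n_2}^N\mathbb{J}_n=c_NT_N-\sum_{n=n_2}^{N-1}(c_{n+1}-c_n)T_n$ with $c_{n+1}-c_n\ge 0$; the boundedness of $p_n(y)$ supplies the \emph{upper} bound $T_m\le p_{n_2}(y)$, but to control the second sum you need a \emph{lower} bound on the partial sums $T_m$, uniform in $m$ and in $y$ over the ball, and nothing in ``monotonicity of $(\tau_n)$ plus boundedness of $p_n(y)$'' provides one (the recursion only bounds $2\tau_n\mathbb{J}_n$ from above). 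Such a bound does exist, but it needs an extra ingredient you do not state: by the saddle-point inequality at $\bar y$, $\mathbb{J}_n\ge\langle y-\bar y,\,Kx_n-w_n\rangle$, and by the dual update \eqref{eq:y_n update} with $\sigma_n=\beta\tau_n$ one has $2\tau_n(Kx_n-w_n)=\tfrac{2}{\beta}(y_n-y_{n-1})$, so the partial sums telescope, $T_m\ge \tfrac{2}{\beta}\langle y-\bar y,\,y_m-y_{n_2-1}\rangle\ge -\tfrac{2}{\beta}\|y-\bar y\|\sup_m\|y_m-y_{n_2-1}\|$, which is uniformly bounded below because $(y_n)$ is bounded (Theorem~\ref{thm_3.1.1}). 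With that ingredient your summation by parts closes; without it the sketch does not. Your parenthetical alternative, the $\tau_n$-weighted ergodic average, avoids the sign issue entirely but bounds $\mathbb{J}$ at the point $\sum_n\tau_nx_n/\sum_n\tau_n$, which cannot simply be ``rewritten'' as the uniform average $\tilde x_N$ of \eqref{x_ntilde and w_ntilde}; it proves the rate for a different averaged iterate than the one in the statement, unless $(\tau_n)$ is eventually constant.
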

\begin{proof}
     For any $y\in\mathbb{Y}$, It follows from \eqref{Theorem4.1_1}, and \eqref{lemma4.1.1.1}
    \begin{equation*}\label{Roc1}
        \begin{aligned}
              2\tau_n\mathbb{J}(x_n,w_n,y)\leq p_{n}(y)-p_{n+1}(y),\hspace{.17cm} \forall n\geq n_2.
        \end{aligned}
    \end{equation*}
  From Proposition \ref{tau_n_bdd_below}, we know that $\tau_n\geq\eta=\min\left\{ \tau_0, \frac{\mu}{\sqrt{\beta}\|K\|}, \frac{\mu'}{\bar{L}} \right\}~\forall n$. Hence
    \begin{equation*}
    2\eta\mathbb{J}(x_n,w_n,y)\leq p_{n}(y)-p_{n+1}(y),\hspace{.17cm} \forall n\geq n_2.
    \end{equation*}
By taking summation over $n=n_2,n_2+1,\ldots,n_2+N-1$, we obtain
\begin{equation*}
\begin{aligned}
2\eta\sum_{n=n_2}^{n_2+N-1}\mathbb{J}(x_n,w_n,y)
&\leq p_{n_2}(y)-p_{n_2+N}(y)\\
&\leq p_{n_2}(y)\\
&= \frac{\psi}{\psi-1}\,\lVert \bar x-z_{n_2+1}\rVert^2
  + \frac{1}{\beta}\,\lVert y-y_{n_2-1}\rVert^2
  + \mu'\,\lVert x_{n_2}-x_{n_2-1}\rVert^2 .
\end{aligned}
\end{equation*}
Now, using the definition of $\Tilde{x}_N,\Tilde{w}_N$ in \eqref{x_ntilde and w_ntilde} and the joint convexity of $\mathbb{J}(x_n,w_n,y)$ in $(x,w)$ for any $y$, we have
 \begin{equation}\label{Roc_3}
 \begin{aligned}
     \mathbb{J}(\Tilde{x}_N,\Tilde{w}_N,y)&=  \Phi(\Tilde{x}_N,\, \Tilde{w}_N)+ \langle y, K\Tilde{x}_N-\Tilde{w}_N \rangle - \Phi(\bar x,\, \bar w) \\ 
     &\leq \dfrac{1}{N}\sum_{n=n_2}^{n_2+N-1} \mathbb{J}(x_n,w_n,y)\\
     &\leq \dfrac{p_{n_2}(y)}{2\eta N}.
 \end{aligned}
 \end{equation}
Define $P_3\coloneqq\dfrac{\psi}{\psi-1}\lVert \bar x-z_{n_2+1} \rVert^2+\dfrac{1}{\beta} (b+\lVert y_{n_2-1} \rVert)^2+ {\mu'}\lVert x_{n_2}-x_{n_2-1}\rVert^2$. Clearly, we can observe that $P_3$ is an upper bound of $p_n(y)$. Hence, by taking maximum on both sides of~(\ref{Roc_3}) over $\lVert y
\rVert\leq b$ yields
\begin{equation}\label{Roc_4}
    \Phi(\Tilde{x}_N,\, \Tilde{w}_N)+ b\lVert K\Tilde{x}_N-\Tilde{w}_N \rVert- \Phi(\bar x,\, \bar w)\leq\dfrac{P_1}{N},
\end{equation}
where $P_1=P_3/2\eta$. Since $(\bar x,\bar w,\bar y)$ is a saddle point of $\mathscr{L}$, we have $\mathscr{L}(\bar x,\bar w,\bar y)\leq \mathscr{L}(x_N,w_N,\bar y)$, Now, using $K\bar x=\bar w$ and $\lVert \bar y\rVert\leq b/2$, we obtain
\begin{equation}\label{Roc_5}
    \Phi(\bar x,\, \bar w)-\Phi(\Tilde{x}_N,\,\Tilde{w}_N)\leq\langle \bar y,\, K\Tilde{x}_N-\Tilde{w}_N \rangle\leq \dfrac{b}{2}\lVert K\Tilde{x}_N-\Tilde{w}_N \rVert.
\end{equation}
Altogether, (\ref{Roc_4}) with (\ref{Roc_5}) implies 
 \begin{equation*}
 \begin{aligned}
     & b\lVert K\Tilde{x}_N-\Tilde{w}_N \rVert\leq \dfrac{b}{2}\lVert K\Tilde{x}_N-\Tilde{w}_N \rVert+ \dfrac{P_1}{N}.   
 \end{aligned}
 \end{equation*}
 Thus $\lVert K\Tilde{x}_N-\Tilde{w}_N \rVert\leq \dfrac{2P_1}{bN}$. Using this estimate in (\ref{Roc_4}) together with (\ref{Roc_5}), we obtain
 \begin{equation*}
     \left | \Phi(\Tilde{x}_N,\, \Tilde{w}_N)- \Phi(\bar x,\,\bar w)\right| \leq \frac{P_1}{N}.
 \end{equation*}
\end{proof}
\subsection{Nonergodic rate of convergence} In this section, we prove the R-Linear rate of convergence for Algorithm \ref{algorithm 1} using the primal-dual gap function. The primal-dual gap function has been widely used in literature; see \cite{malitsky2018first,chambolle2016ergodic,chambolle2011first,vladarean2021first,chang2021goldengrpda,chang2022goldenlinesearch}. First, we define the primal-dual gap function, and then we recall the definition of Linear convergence of a sequence.

Under Assumption \ref{assumption1} and from \cite{Rockafellar+1970}, it follows that, $\hat x\in\mathbb{X}$ is a solution of the primal problems (\ref{1.1}) if and only if there exists $\hat y\in\mathbb{Y}$ such that $(\hat x , \hat y)$ is a saddle point of (\ref{1.2}), that is
\begin{equation}\label{gap_2}
    \mathbb{L}(\hat x, y)\leq \mathbb{L}(\hat x, \hat y)\leq\mathbb{L}(x, \hat y)~~~\forall (x,y)\in\mathbb{X}\times\mathbb{Y},
\end{equation}
where $\mathbb{L}$ is defined in \eqref{1.2}. Here, such $(\hat x, \hat y)$ is the solution of the min-max or saddle point problem (\ref{1.2}), and  $\hat y$ is the solution of the dual problem (\ref{1.3}). Notice that, \eqref{gap_2} can be equivalently written as
\begin{equation*}
\begin{cases}
\displaystyle
\mathcal{P}_{(\hat x, \hat y)}(x)
= f(x)-f(\hat x)
  + \bigl\langle x-\hat x,\;K^*\hat y + \nabla h(\hat x)\bigr\rangle\geq 0
  & \forall x\in\mathbb{X},\\[1ex]
\displaystyle
\mathcal{D}_{(\hat x, \hat y)}(y)
= g^*(y)-g^*(\hat y)
  - \bigl\langle K\hat x,\;y-\hat y\bigr\rangle\geq 0
  ~~\forall y\in\mathbb{Y}.
\end{cases}
\end{equation*}
Given $(\hat x, \hat y)\in\mathbb{X}\times\mathbb{Y}$, the \emph{primal-dual gap} function is defined as
\begin{equation}\label{primaldualgapfunction}
    \mathcal{G}_{(\hat x, \hat y)}(x,y)=\mathcal{P}_{(\hat x, \hat y)}(x)+\mathcal{D}_{(\hat x, \hat y)}(y)\geq 0, ~\forall(x,y)\in\mathbb{X}\times\mathbb{Y}.
\end{equation}
Let us now recall the definition of Linear convergence of a sequence $(v_n)\in\mathbb{X}$. We say the sequence $(v_n)$ converges Q-Linearly to $v\in\mathbb{X}$ if there is $q\in(0,1)$ and a natural number $k$ such that $\|v_{n+1}-v\|\leq q \|v_n-v\|$ for all $n\geq k$. And $(v_n)$ converges R-Linearly to $v$ if there exists a sequence $\epsilon_n$ and a natural number $k_1$ and $M>0$ such that $\|v_n-v\|\leq M\epsilon_n$ for all $n\geq k_1$, and $\epsilon_n$ converges Q-Linearly to 0.

Given a differentiable function $h$, we say that $h$ is strongly convex if there exists a constant $\gamma_h>0$ such that
\begin{equation}\label{strng_cnvx_h}
    h(y)-h(z)\geq \langle \nabla h(z), y-z\rangle+ \frac{\gamma_h}{2}\lVert y-z\rVert^2~\forall y,z\in\mathbb{X}.
\end{equation}
Furthermore, for a nonsmooth function $g^*$, we call $g^*$ to be strongly convex if there exists a constant $\gamma_{g^*}>0$ such that
\begin{equation}\label{strng_cnvx_g^*}
       g^*(y)-g^*(z) \geq \langle u, y-z \rangle + \frac{\gamma_{g^*}}{2}\|y-z\|^2,~ \forall y,z\in\mathbb{Y},~ \forall u\in\partial g^*(z).
   \end{equation}
\begin{assumption}\label{assump_4.1}
   Suppose that $g^*, h$ are strongly convex functions with constants $\gamma_{g^*}, \gamma_h>0$, respectively.
\end{assumption}
\vspace{8pt}
\begin{theorem}\label{theorem5.1.1}
   Under Assumptions \ref{assumption1}, \ref{assumption_2} and \ref{assump_4.1}, let $\{(x_n,y_n)\}$ be the sequences generated by Algorithm \ref{algorithm 1}. Suppose that $\{(x_n,y_n)\}$ converges to the unique primal-dual solution $(\bar x, \bar y)$. Then, there exist constants $V_1,  V_2,  Z >0$, a scalar $\zeta\in(0,1)$, and a natural number $n_4$ such that 
 \begin{equation*}
    \begin{aligned}
        &\lVert \bar{x} - z_{n+2} \rVert^2 \leq Z\zeta^n , \quad \lVert \bar{y} - y_n \rVert^2 \leq V_1 \zeta^n, \quad \lVert x_n - x_{n+1} \rVert^2 \leq V_2 \zeta^n, \quad \forall n \geq n_4,
    \end{aligned}
\end{equation*}
and thus $\{(x_n,y_n)\}$ converges R-linearly to zero. 

\end{theorem}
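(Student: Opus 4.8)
The plan is to upgrade the energy inequality of Lemma~\ref{Lemma4.1.1} into a strict contraction by harvesting the two quadratic surpluses that strong convexity provides, and then to convert the resulting one-step descent into a genuine (two-step) geometric decay of the Lyapunov function $p_n(\bar y)$ from \eqref{Theorem4.1_1}. The key observation is that the three quantities to be bounded, $\|\bar x-z_{n+2}\|^2$, $\|\bar y-y_n\|^2$ and $\|x_n-x_{n+1}\|^2$, are (up to fixed constants) exactly the three summands of $p_{n+1}(\bar y)$. Hence it suffices to prove $p_n(\bar y)\le C\zeta^n$ for some $C>0$ and $\zeta\in(0,1)$: isolating each summand then yields the three displayed estimates with $Z=\tfrac{\psi-1}{\psi}C$, $V_1=\beta C$, $V_2=C/\mu'$, and R-linear convergence of $(x_n,y_n)$ follows since $\|\bar x-x_n\|^2$ and $\|\bar y-y_n\|^2$ are themselves controlled by $p_n$.

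First I would revisit the derivation leading to \eqref{lemma4.1_5}, now invoking Assumption~\ref{assump_4.1}. Strong convexity of $h$ in \eqref{strng_cnvx_h} lets me sharpen the convexity step $\tau_n\langle\nabla h(x_n),\bar x-x_n\rangle\le\tau_n(h(\bar x)-h(x_n))$ by the margin $-\tfrac{\tau_n\gamma_h}{2}\|\bar x-x_n\|^2$; equivalently, taking $y=\bar y$ one verifies directly that $\mathbb{J}(x_n,w_n,\bar y)\ge\tfrac{\gamma_h}{2}\|\bar x-x_n\|^2$ using $-K^*\bar y-\nabla h(\bar x)\in\partial f(\bar x)$ and $K\bar x=\bar w$, so that $2\tau_n\mathbb{J}$ already contributes a primal decay term $c_1\|\bar x-x_n\|^2$ with $c_1\sim\tau_n\gamma_h$. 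For the dual surplus I would use that the update \eqref{eq:y_n update} gives $y_n\in\partial g(w_n)$, while $\bar y\in\partial g(\bar w)$; passing to conjugates gives $w_n\in\partial g^*(y_n)$, $\bar w\in\partial g^*(\bar y)$, and strong convexity of $g^*$ in \eqref{strng_cnvx_g^*} then yields the strong-monotonicity bound $\langle y_n-\bar y,\,w_n-\bar w\rangle\ge\gamma_{g^*}\|\bar y-y_n\|^2$, which I would route through the dual identity \eqref{eq:y-id}. Carrying both surpluses through produces, for all $n$ beyond some $n_3$, a strengthened descent $p_{n+1}(\bar y)+q_n\le p_n(\bar y)$, where $q_n$ collects $c_1\|\bar x-x_n\|^2$ and $c_2\|\bar y-y_n\|^2$ (with $c_2\sim\tau_n\gamma_{g^*}$, both bounded below by Proposition~\ref{tau_n_bdd_below}), the decay term $\psi\bar\theta_n\|x_n-z_{n+1}\|^2$ already present in \eqref{lemma4.1.1.1}, and the two increment terms $(\mu-2\mu')\|x_{n+1}-x_n\|^2$ and $\tfrac{1-\mu}{\beta}\|y_n-y_{n-1}\|^2$ that were discarded between \eqref{lemma4.1_5} and \eqref{lemma4.1.1.1} (all with positive coefficients for large $n$ by \eqref{lemma4.1_6} and $\mu<1$).

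The crux is then to establish $p_n(\bar y)\le M(q_{n-1}+q_n)$ for some $M>0$, via the triangle inequality applied to each summand of $p_n(\bar y)$. The dual term satisfies $\tfrac1\beta\|\bar y-y_{n-1}\|^2\le\tfrac2\beta\|\bar y-y_n\|^2+\tfrac2\beta\|y_n-y_{n-1}\|^2$, controlled by the $c_2$ and $\tfrac{1-\mu}{\beta}$ parts of $q_n$; the lagged increment $\mu'\|x_n-x_{n-1}\|^2$ is exactly the $x$-increment bonus carried in $q_{n-1}$; and the delicate geometric-average term is tamed by $\|\bar x-z_{n+1}\|^2\le 2\|\bar x-x_n\|^2+2\|x_n-z_{n+1}\|^2$ (with $x_n-z_{n+1}=\tfrac1\psi(x_n-z_n)$ from \eqref{eqn:39a}), controlled by the $c_1$ and $\psi\bar\theta_n$ parts of $q_n$. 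Combining with the descent, $p_n\le M(q_{n-1}+q_n)\le M\big((p_{n-1}-p_n)+(p_n-p_{n+1})\big)=M(p_{n-1}-p_{n+1})$, so $(M+1)p_{n+1}\le M\,p_{n-1}$, i.e.\ $p_{n+1}\le\tfrac{M}{M+1}p_{n-1}$. This two-step contraction forces $p_n\le C\zeta^n$ with $\zeta=\big(\tfrac{M}{M+1}\big)^{1/2}\in(0,1)$, which completes the proof after isolating the summands of $p_{n+1}(\bar y)$.

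I expect the main obstacle to be making the dual surplus genuinely appear: the estimate of Lemma~\ref{Lemma4.1.1} is phrased through $g$ and $w_n$ rather than through $g^*$, so one must carefully inject the strong-monotonicity bound $\langle y_n-\bar y,w_n-\bar w\rangle\ge\gamma_{g^*}\|\bar y-y_n\|^2$ into the $\tfrac1\beta$-scaled dual terms without disturbing the primal cancellations, and confirm that its coefficient survives with a uniform positive lower bound. The secondary difficulty is purely index-arithmetic, namely matching the lagged quantities $\|x_n-x_{n-1}\|^2$ and $\|\bar y-y_{n-1}\|^2$ in $p_n$ against decay terms carried at steps $n-1$ and $n$; it is precisely this mismatch that forces the two-step, rather than one-step, contraction argument.
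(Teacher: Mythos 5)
Your proposal is correct, and it reaches the theorem by a route that differs from the paper's in two identifiable ways while sharing the same core: both arguments strengthen the descent estimate behind Lemma~\ref{Lemma4.1.1} with the surpluses $\tau_n\gamma_h\lVert \bar x-x_n\rVert^2$ and $\tau_n\gamma_{g^*}\lVert \bar y-y_n\rVert^2$, and both lean on $\tau_n\ge\eta$ and $2\mu'<\mu$ for uniformly positive coefficients. First, the dual surplus. The paper leaves the $(x,w,y)$ formulation, works with the gap function $\mathcal{G}_{(\bar x,\bar y)}$ of \eqref{primaldualgapfunction}, and injects strong convexity of $g^*$ through the prox-optimality condition \eqref{dual_optimality_condition} of the $y$-update; you stay with $\mathbb{J}$ and $w_n$ and invoke $w_n\in\partial g^*(y_n)$, $\bar w\in\partial g^*(\bar y)$. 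The obstacle you flag is real but resolves cleanly: by the Fenchel equalities $g(w_n)=\langle y_n,w_n\rangle-g^*(y_n)$ and $g(\bar w)=\langle \bar y,\bar w\rangle-g^*(\bar y)$, strong convexity of $g^*$ sharpens the inequality $g(w_n)-g(\bar w)\le\langle y_n,w_n-\bar w\rangle$ of \eqref{3.2.1} to
\begin{equation*}
g(w_n)-g(\bar w)\le\langle y_n,\,w_n-\bar w\rangle-\tfrac{\gamma_{g^*}}{2}\lVert y_n-\bar y\rVert^2,
\end{equation*}
which is precisely the functional form of your strong-monotonicity bound, and this margin rides through the rest of the lemma untouched. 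Second, the conversion to geometric decay. The paper rebalances coefficients into a one-step contraction of a reweighted Lyapunov function — its key step \eqref{Roceqn4} fuses the $\gamma_h$-surplus with the $\lVert x_n-z_{n+1}\rVert^2$ decay to discount the coefficient of $\lVert \bar x-z_{n+1}\rVert^2$, producing the explicit factor $\zeta=\max\bigl\{1-\tfrac{\psi-1}{\psi}\eta\gamma_h,\ \tfrac{\mu'}{\mu-\mu'},\ \tfrac{1}{1+\beta\eta\gamma_{g^*}}\bigr\}$ — whereas you keep the surpluses as a separate quantity $q_n$, prove the error bound $p_n\le M(q_{n-1}+q_n)$, and deduce the two-step contraction $(M+1)p_{n+1}\le Mp_{n-1}$. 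Your route yields a less explicit rate, but it buys three things: your triangle inequalities run in the safe direction $\lVert \bar x-z_{n+1}\rVert^2\le 2\lVert \bar x-x_n\rVert^2+2\lVert x_n-z_{n+1}\rVert^2$, so you never need the step $\lVert a\rVert^2+\lVert b\rVert^2\ge\lVert a+b\rVert^2$ used in \eqref{Roceqn4}, which as written actually requires an extra factor of $\tfrac12$; you recycle the dual increment $\tfrac{1-\mu}{\beta}\lVert y_n-y_{n-1}\rVert^2$ that the paper discards, which is what lets you dominate the lagged term $\lVert \bar y-y_{n-1}\rVert^2$; and since $q_n$ contains $\eta\gamma_h\lVert \bar x-x_n\rVert^2$ while $q_n\le p_n$, you get $\lVert \bar x-x_n\rVert^2\le p_n/(\eta\gamma_h)$ directly, so the R-linear convergence of $(x_n)$ is immediate without the Cauchy-sequence argument \eqref{cauchy_seqn} that closes the paper's proof.
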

\begin{proof} 
From the dual update \eqref{eq:w_n update}, \eqref{eq:y_n update} in Algorithm \ref{algorithm 1}, and Remark \ref{Remark_tau_n},  we have
\begin{equation}\label{dual_optimality_condition}
y_{n-1}-y_n+\sigma_nKx_n\in\sigma_n\partial g^*(y_n).
\end{equation}
Using \eqref{dual_optimality_condition} along with the definition of strong convexity of $g^*$ in \eqref{strng_cnvx_g^*}, and the fact $\sigma_n=\beta\tau_n$, we obtain
\begin{multline}\label{strng_g^*}
   \tau_n(g^*(y_n) - g^*(\bar y))\leq\left\langle \frac{1}{\beta}(y_n-y_{n-1})-\tau_nKx_n,\, \bar y-y_n\right\rangle - \frac{\tau_n\gamma_{g^*}}{2}\|\bar y - y_n\|^2.
\end{multline}
Again, by elementary calculations similar to those in Lemma~\ref{Lemma4.1.1}, together with~\eqref{strng_g^*} and the definition of the primal--dual gap function in~\eqref{primaldualgapfunction}, we obtain
    \begin{multline*}
        \tau_n\mathcal{G}_{(\bar x, \bar y)}(x_n,y_n)\leq 
       \langle x_{n+1}-z_{n+1}, \, \bar x-x_{n+1}\rangle 
       + \psi\bar\theta_n\langle x_n-z_{n+1},\, x_{n+1}-x_n\rangle\\  +\dfrac{1}{\beta}\langle y_n-y_{n-1}, \,\bar y-y_n\rangle + \tau_n\langle K(x_n-x_{n+1}), \,y_n-y_{n-1} \rangle
     + \tau_{n}\langle \nabla h(x_{n-1}), \,x_{n+1}-x_n \rangle\\
     + \tau_n\big(h(x_n)-h(\bar x)+ \langle\nabla h(x_n), \,\bar x-x_{n+1}\rangle\big) - \frac{\tau_n\gamma_{g^*}}{2}\|\bar y - y_n\|^2.
    \end{multline*}
Furthermore, using (\ref{strng_cnvx_h}), we have
\begin{multline*}
     \tau_n\mathcal{G}_{(\bar x, \bar y)}(x_n,y_n)\leq  \langle x_{n+1}-z_{n+1}, \, \bar x -x_{n+1}\rangle  + \psi\bar\theta_n\langle  x_n-z_{n+1},\, x_{n+1}-x_n\rangle \\ 
      +\dfrac{1}{\beta}\langle y_n-y_{n-1},\, \bar y-y_n\rangle+
        \tau_{n}\langle K(x_n-x_{n+1}),\, y_n-y_{n-1} \rangle \\
        + \tau_n\langle \nabla h(x_n)-\nabla h(x_{n-1}),\, x_n-x_{n+1}\rangle 
        -\frac{\tau_n\gamma_{g^*}}{2}\|\bar y - y_n\|^2-\frac{\gamma_h\tau_n}{2}\|\bar x - x_n\|^2.
\end{multline*}  
Doing similar calculations as in Lemma \ref{Lemma4.1.1}, and following \eqref{psi_ineq}, \eqref{lemma4.1_6}, there exists a natural number $n_2$ such that $\forall n\geq n_2$
\begin{multline}\label{Roceqn1}
     2\tau_n\mathcal{G}_{(\bar x, \bar y)}(x_n,y_n)+ \dfrac{\psi}{\psi-1}\lVert \bar x-z_{n+2} \rVert^2 + \dfrac{1}{\beta}\lVert \bar y-y_{n} \rVert^2 + (\mu-\mu')\lVert x_n-x_{n+1} \rVert^2 \\ \leq \dfrac{\psi}{\psi-1}\lVert \bar x-z_{n+1} \rVert^2 +\dfrac{1}{\beta}\lVert \bar y-y_{n-1} \rVert^2+{\mu'}\lVert x_n-x_{n-1} \rVert^2 - {\psi\bar\theta_n}\lVert x_n-z_{n+1} \rVert^2 \\ 
     -{\gamma_h\tau_n}\lVert \bar x-x_n\lVert^2-{\tau_n\gamma_{g^*}}\lVert \bar y-y_n\lVert^2.
\end{multline}
Since $(\bar x, \bar y)$ is a primal-dual solution of (\ref{1.2}), it follows from (\ref{primaldualgapfunction}) that $\tau_n\mathcal{G}_{(\bar x, \bar y)}(x_n,y_n)\geq 0$ for all $n$. Again, from the facts that $(\tau_n)$ converges to a positive constant and $4\mu'<\psi$, we have
\vspace{3pt}
\begin{equation*}
    \lim_{n\to\infty}\psi\bar\theta_n=\psi>4\mu'.
\end{equation*}
\vspace{3pt}
Thus, there exists a natural number $n_3$ such that 
\begin{equation*}
    \psi\bar\theta_n>4\mu',~\forall n\geq n_3.
\end{equation*}
Consequently, taking $n_4=\max\{n_2,n_3\}$, from (\ref{Roceqn1}), $\forall n\geq n_4$, we obtain
\begin{multline}\label{Roceqn2}
       \dfrac{\psi}{\psi-1}\lVert \bar x-z_{n+2} \rVert^2 + \dfrac{1}{\beta}\lVert \bar y-y_{n} \rVert^2 + (\mu-\mu')\lVert x_n-x_{n+1} \rVert^2 \leq \dfrac{\psi}{\psi-1}\lVert \bar x-z_{n+1} \rVert^2\\
       +\dfrac{1}{\beta}\lVert \bar y-y_{n-1} \rVert^2 +{\mu'}\lVert x_n-x_{n-1} \rVert^2
        - 4\mu'\lVert x_n-z_{n+1} \rVert^2 \\
          -{\gamma_h\tau_n}\lVert \bar x-x_n\lVert^2-{\tau_n\gamma_{g^*}}\lVert \bar y-y_n\lVert^2.
\end{multline}
The strong convexity of $h$ and Lipschitz continuity of $\nabla h$ imply that $0<\gamma_h\leq \bar L$. Consequently, $0<\eta\gamma_h\leq \bar L\dfrac{\mu'}{\bar L}=\mu'$, where $\eta = \min\left\{ \tau_0, \frac{\mu}{\sqrt{\beta}\|K\|}, \frac{\mu'}{\bar{L}} \right\}$, as defined in Proposition \ref{tau_n_bdd_below}.
By applying this fact, we obtain
\begin{equation}\label{Roceqn4}
    \begin{aligned}
        \gamma_h\tau_n\lVert \bar x-x_n\lVert^2+4\mu'\lVert x_n-z_{n+1} \rVert^2&\geq\eta\gamma_h\lVert \bar x-x_n\lVert^2 + \mu'\lVert x_n-z_{n+1} \rVert^2\\ &
        \geq \eta\gamma_h\big(\lVert \bar x-x_n\lVert^2+\lVert x_n-z_{n+1} \rVert^2\big)\\&
        \geq\eta\gamma_h\lVert \bar x-z_{n+1}\lVert^2.
    \end{aligned}
\end{equation}
Combining (\ref{Roceqn2}) and (\ref{Roceqn4}), we have
\begin{equation*}
    \begin{aligned}
         &\dfrac{\psi}{\psi-1}\lVert \bar x-z_{n+2} \rVert^2 + \left(\dfrac{1}{\beta}+ \tau_n\gamma_{g^*}\right)\lVert \bar y-y_n\rVert^2+ (\mu-\mu')\lVert x_n-x_{n+1} \rVert^2\\ &
         \leq\Big(\dfrac{\psi}{\psi-1}-\eta\gamma_h\Big)\lVert \bar x-z_{n+1} \rVert^2+\dfrac{1}{\beta}\lVert \bar y-y_{n-1} \rVert^2+ \mu'\lVert x_n-x_{n-1} \rVert^2 . 
    \end{aligned}
\end{equation*}
Rearranging all the terms and remembering $\tau_n\geq\eta~~\forall n$, we obtain
\begin{equation}\label{Roceqn6}
    \begin{aligned}
        &\lVert \bar x-z_{n+2} \rVert^2 + \dfrac{\psi-1}{\psi}\left(\dfrac{1}{\beta}+\eta\gamma_{g^*}\right)\lVert \bar y-y_{n} \rVert^2 + \dfrac{\psi-1}{\psi}(\mu-\mu')\lVert x_n-x_{n+1} \rVert^2\\ &
         \leq\Big(1-\dfrac{\psi-1}{\psi}\eta\gamma_h\Big)\lVert \bar x-z_{n+1} \rVert^2+\dfrac{1}{\beta}\dfrac{\psi-1}{\psi}\lVert \bar y-y_{n-1} \rVert^2 + \mu'\dfrac{\psi-1}{\psi}\lVert x_n-x_{n-1} \rVert^2 .
    \end{aligned}
\end{equation}
Now, take $\zeta =\max\left\{1-\dfrac{\psi-1}{\psi}\eta\gamma_h,~~\dfrac{\mu'}{\mu-\mu'}, ~~\dfrac{1}{1+\beta\eta\gamma_{g^*}}\right\}$, then $\zeta\in(0,1)$ and it follows from (\ref{Roceqn6}) that,$~\forall n\geq n_4$
\begin{multline}\label{Roceqn7}
    \lVert \bar{x}-z_{n+2} \rVert^2 + \frac{\psi-1}{\psi}\left(\frac{1}{\beta}+\eta\gamma_{g^*}\right)\lVert \bar{y}-y_{n} \rVert^2 + \dfrac{\psi-1}{\psi}(\mu-\mu')\lVert x_n-x_{n+1} \rVert^2\\ 
\leq \zeta\left(\lVert \bar{x}-z_{n+1} \rVert^2 + \frac{\psi-1}{\psi}\left(\frac{1}{\beta}+\eta\gamma_{g^*}\right)\lVert \bar{y}-y_{n-1} \rVert^2+\frac{\psi-1}{\psi}(\mu-\mu')\lVert x_n-x_{n-1} \rVert^2\right).
\end{multline}
Iterating (\ref{Roceqn7}) yields
\begin{multline*}
     \lVert \bar{x}-z_{n+2} \rVert^2 + \dfrac{\psi-1}{\psi}\left(\dfrac{1}{\beta}+\eta\gamma_{g^*}\right)\lVert \bar{y}-y_{n} \rVert^2 + \dfrac{\psi-1}{\psi}(\mu-\mu')\lVert x_n-x_{n+1} \rVert^2\\ 
        \leq \zeta^{n-n_4+1}M,~\forall n\geq n_4,
\end{multline*}
where
\begin{multline*}
            M=\lVert \bar{x}-z_{n_4+1} \rVert^2 + \dfrac{\psi-1}{\psi}\left(\dfrac{1}{\beta}+\eta\gamma_{g^*}\right)\lVert \bar{y}-y_{{n_4}-1} \rVert^2\\ + \dfrac{\psi-1}{\psi}(\mu-\mu')\lVert x_{{n_4}}-x_{n_4-1} \rVert^2.
\end{multline*}
Moreover, setting $Z=\dfrac{M}{\zeta^{n_4-1}}$, we obtain
 \begin{multline}\label{Roceqn9}
        \lVert \bar{x}-z_{n+2} \rVert^2 + \dfrac{\psi-1}{\psi}\left(\dfrac{1}{\beta}+\eta\gamma_{g^*}\right)\lVert \bar{y}-y_{n} \rVert^2 + \dfrac{\psi-1}{\psi}(\mu-\mu')\lVert x_n-x_{n+1} \rVert^2\\
        \leq\zeta^n Z ~\forall n\geq n_4.
\end{multline}
Therefore, (\ref{Roceqn9}) implies that $\lVert \bar{x}-z_{n+1} \rVert^2 + \dfrac{\psi-1}{\psi}\left(\dfrac{1}{\beta}+\eta\gamma_{g^*}\right)\lVert \bar{y}-y_{n-1} \rVert^2 + \dfrac{\psi-1}{\psi}(\mu-\mu')\lVert x_n-x_{n-1} \rVert^2$ converges R-Linearly to zero. Moreover, from (\ref{Roceqn9}), $\forall n\geq n_4$, we have  
\begin{equation*}
        \lVert \bar{x}-z_{n+2} \rVert^2\ \leq Z\zeta^n, \quad \lVert \bar{y}-y_{n} \rVert^2\leq V_1\zeta^n,~\text{where}~ V_1=\dfrac{\beta \psi Z}{(\psi-1)(1+\beta\eta\gamma_{g^*})}.
\end{equation*}
Since $\psi>1$, it follows that $V_1>0$. From (\ref{Roceqn9}), we obtain
\begin{equation*}
    \lVert x_n-x_{n+1} \rVert^2\leq V_2\zeta^n,~\text{where}~V_2=\dfrac{\psi Z}{(\psi-1)(\mu-\mu')}.
\end{equation*}
As $\mu-\mu'>\mu>0$, we have $V_2>0$ and thus $\lVert x_n-x_{n+1} \rVert\leq \sqrt{V_2}\zeta^{n/2}$. Now, for $m>n\geq n_4$, by applying the triangle inequality, we have
\begin{equation}\label{cauchy_seqn}
\begin{aligned}
     \|x_n-x_m\|& \leq \|x_n-x_{n+1}\| + \|x_{n+1}-x_{n+2}\| + \ldots + \|x_{m-1}-x_m\|\\ &
     \leq \sqrt{V_2}\left(\zeta^{n/2} + \zeta^{n+1/2} + \ldots + \zeta^{m-1/2}\right) \\ &
     \leq \sqrt{V_2}\zeta^{n/2}\left(\zeta + \zeta^{1/2} + \ldots + \zeta^{m-n-1/2}\right) \\ &
     = \sqrt{V_2}\zeta^{n/2}\frac{1-\zeta^{m-n/2}}{1-\zeta^{1/2}}.
\end{aligned}
\end{equation}
Since $\zeta\in(0,1)$, it follows from (\ref{cauchy_seqn}) that $ \|x_n-x_m\|\leq\frac{\sqrt{V_2}\zeta^{n/2}}{1-\zeta^{1/2}}$, $\forall m, n\geq n_4$. This proves that $(x_n)$ is a Cauchy sequence, and hence $(x_n)$ converges to $\bar x$. Finally, letting $m\to\infty$, we obtain $\|x_n-\bar x\|\leq\frac{\sqrt{V_2}\zeta^{n/2}}{1-\zeta^{1/2}}$ for all $n\geq n_4$. This completes the proof.
\end{proof}

%--------------------The Adaptive extended Golden Ratio primal-dual algorithm-------------
%------------------------------------------------%----------------------------------------
\section{The Adaptive extended Golden Ratio primal-dual algorithm}\label{sec5}
In this section, we propose an adaptive version of E-GRPDA, namely \emph{aEGRPDA}, which does not rely on linesearch procedures (like Algorithm~\ref{algorithm 1}) and exploits local curvature information of $\nabla h$ to establish global convergence of the iterates. The key differences between Algorithm~\ref{algorithm 2} and Algorithm~\ref{algorithm 1} are that the stepsizes are allowed to increase from time to time and that $h$ is only required to be locally smooth. It was shown in~\cite{malitsky2020golden} that aGRAAL for monotone variational inequalities converges at an $\mathcal{O}(1/N)$ rate, and we derive a similar result in this section. Next, for $n\ge 1$, we define

\vspace{5pt}
\begin{equation*}
    \bar L_n=\frac{\lVert\nabla h(x_n)-\nabla h(x_{n-1})\rVert}{\lVert x_n-x_{n-1}\rVert}~~\text{when}~~x_n-x_{n-1}\neq 0.
\end{equation*}

\begin{algorithm}[H]
\caption{The aEGRPDA for (\ref{E-GRPDA})}
\label{algorithm 2}
\begin{algorithmic}[1]
    \State \textbf{Initialization:} Let $x_0 \in \mathbb{X}$, $y_0 \in \mathbb{Y}$, and set $z_0 = x_0$. Choose $\beta > 0$, $\psi \in (1, \phi]$, $\rho = \psi^{-1}+\psi^{-2}$, $\theta_0 > 0$ and $\tau_{\max}\gg0$. Suppose $\tau_0 > 0$ and $n = 1$.  
    \State \textbf{Step 1:} \emph{Compute}:
        \begin{align*}
            z_n &= \dfrac{\psi-1}{\psi}x_{n-1} + \dfrac{1}{\psi}z_{n-1},\\
            x_{n} &= \operatorname{prox}_{\tau_{n-1} f}(z_n-\tau_{n-1} K^*y_{n-1}-\tau_{n-1}\nabla h(x_{n-1})).
            \end{align*}
 \State \textbf{Step 2:} \emph{Update}:
    \begin{equation}
        \begin{aligned}
            \tau_n & = \min\left\{\rho\tau_{n-1},~\dfrac{\psi\theta_{n-1}}{9({\bar L_n}^2+\beta\psi\|K\|^2)}\dfrac{1}{\tau_{n-1}},~\tau_{\max}\right\}\label{eq:alg2_c},\\ 
            \sigma_n  &= \beta \tau_n.
        \end{aligned}
    \end{equation}
    \State \textbf{Step 3:} \emph{Compute}:    
        \begin{align*}
          w_n &= \operatorname{prox}_{\frac{1}{\sigma_n}g}\Bigl(\frac{y_{n-1}}{\sigma_n} + Kx_n\Bigr),\\
          y_n &= y_{n-1} + \sigma_n\bigl(Kx_n - w_n\bigr).
          %\;=\;\prox_{\sigma_n g^*}\bigl(y_{n-1} + \sigma_n Kx_n\bigr).
        \end{align*}
   \State \textbf{Step 4:} \emph{Update}:~$\theta_n=\dfrac{\psi\tau_n}{\tau_{n-1}}$.
   \State \textbf{Step 5:} Let $n \leftarrow n+1$ and return to \textbf{Step 1}.
\end{algorithmic}
\end{algorithm}

Before we delve into the convergence analysis of Algorithm~\ref{algorithm 2}, we pause to make a few observations.
\vspace{8pt}
\begin{remark}
    In Algorithm \ref{algorithm 2}, $\tau_{\max}$ is chosen to be a large value. In practice, we take $\tau_{\max} = 10^7$ and $\rho = \frac{1}{\psi} +\frac{1}{\psi^2}$ for a given $\psi$. As in Algorithm \ref{algorithm 1}, we also adopt the convention $\frac{0}{0} = \infty$. Under this convention, when $x_n= x_{n-1}$, we calculate $\tau_n$ as
    $\tau_n = \min\{\rho\tau_{n-1}, \tau_{\max}\}$.
\end{remark}
\vspace{8pt}
\begin{remark}
    The E-GRPDA algorithm \eqref{E-GRPDA} converges under the stepsize condition $\tau\sigma\lVert K\rVert^2+2\tau\bar L<\psi$ for any $\psi\in(1,\phi]$, where $\sigma=\beta\tau$. Now, substituting the value of $\sigma$ back into the stepsize inequality gives
    \begin{equation*}
        \beta\tau^2\lVert K\rVert^2+2\tau\bar L<\psi\implies\beta\lVert K\rVert^2+\dfrac{2\bar L}{\tau}-\dfrac{\psi}{\tau^2}<0.
    \end{equation*}
    Solving this, we have $\tau\in\left(0,\dfrac{\psi}{\bar L+\sqrt{\bar L^2+\psi\beta\lVert K\rVert^2}}\right)$.
\end{remark}
\noindent
 We show that Algorithm \ref{algorithm 2} converges when this estimate is satisfied locally given by $\tau_n\in\left(0,\dfrac{\psi}{3\sqrt{\bar L_n^2+\psi\beta\|K\|^2}}\right)$ for any $\psi\in(1,\phi]$. In particular, we analyze the convergence of Algorithm \ref{algorithm 2} when $\tau_n\tau_{n-2}\leq\dfrac{\psi^2}{9(\bar L_n^2+\psi\beta \|K\|^2)}$. This inequality implies that
 \begin{equation*}
    \tau_n\leq\dfrac{\psi}{9(\bar L_n^2+\psi\beta\|K\|^2)}\dfrac{\psi\tau_{n-1}}{\tau_{n-2}}\dfrac{1}{\tau_{n-1}}=\dfrac{\psi\theta_{n-1}}{9(\bar L_n^2+\psi\beta\|K\|^2)}\dfrac{1}{\tau_{n-1}}~\forall n\geq 1,
 \end{equation*}
 where $\tau_0$, $\theta_0>0$ and $\theta_{n} = \dfrac{\psi\tau_{n}}{\tau_{n-1}}$.
\vspace{8pt}
\begin{remark}\label{remark_5.2}
From equation (\ref{eq:alg2_c}), notice that
    \begin{equation*}
        \tau_n\leq\dfrac{\theta_n\theta_{n-1}}{9\tau_n}\dfrac{1}{\bar L_n^2+\beta\psi\|K\|^2}~~\text{as}~~\dfrac{\psi\theta_{n-1}}{\tau_{n-1}}=\dfrac{\theta_n\theta_{n-1}}{\tau_n}.     
    \end{equation*}
    Therefore, it follows that $\tau_n^2\bar L_n^2\leq\dfrac{\theta_n\theta_{n-1}}{9}$ or $\tau_n\bar L_n\leq \dfrac{\sqrt{\theta_n\theta_{n-1}}}{3}$. Similarly, from \eqref{eq:alg2_c}, we also derive $\tau_n\leq\dfrac{1}{3}\sqrt{\dfrac{\theta_n\theta_{n-1}}{\beta\psi\|K\|^2}}$.
\end{remark}
\vspace{8pt}
\begin{proposition}\label{prop_1}
Suppose that $h$ is locally smooth and the sequence $(x_n)$ generated by Algorithm \ref{algorithm 2} is bounded. Then, the sequences $(\tau_n)$ and $(\theta_n)$ are bounded below by positive constants.
\end{proposition}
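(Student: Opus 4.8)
The plan is to establish a positive lower bound for $(\tau_n)$ first, and then deduce the corresponding bound for $(\theta_n)$ via the update rule $\theta_n = \psi\tau_n/\tau_{n-1}$. Since $h$ is locally smooth and $(x_n)$ is bounded, the iterates lie in a compact set on which $\nabla h$ is Lipschitz with some finite modulus, say $L$; consequently the local estimates $\bar L_n = \lVert\nabla h(x_n)-\nabla h(x_{n-1})\rVert / \lVert x_n-x_{n-1}\rVert$ satisfy $\bar L_n \le L$ for all $n$ with $x_n\neq x_{n-1}$. This uniform bound on $\bar L_n$ is the crucial ingredient, because it controls the second term in the $\min$ defining $\tau_n$ in \eqref{eq:alg2_c}.

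First I would examine the recursion for $\tau_n$. From \eqref{eq:alg2_c} the only way $\tau_n$ can become small is through the middle term $\dfrac{\psi\theta_{n-1}}{9(\bar L_n^2+\beta\psi\lVert K\rVert^2)}\dfrac{1}{\tau_{n-1}}$. Substituting $\theta_{n-1} = \psi\tau_{n-1}/\tau_{n-2}$ shows that this term equals $\dfrac{\psi^2}{9(\bar L_n^2+\beta\psi\lVert K\rVert^2)\tau_{n-2}}$, so the product $\tau_n\tau_{n-2}$ is bounded below whenever $\bar L_n$ is bounded above. Using $\bar L_n\le L$, one gets $\tau_n\tau_{n-2} \ge \dfrac{\psi^2}{9(L^2+\beta\psi\lVert K\rVert^2)}$ (on the branch where the middle term is active), which says that $\tau_n$ cannot decrease too rapidly relative to its value two steps earlier. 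The natural way to convert this two-step coupling into a genuine lower bound is an inductive argument that tracks $\tau_n$ and $\tau_{n-1}$ jointly: I would show by induction that $\tau_n \ge c$ for the constant $c \coloneqq \min\Big\{\tau_0,\tau_1,\ \dfrac{\psi^2}{9(L^2+\beta\psi\lVert K\rVert^2)\,\tau_{\max}}\Big\}$, handling separately the case where $\tau_n = \rho\tau_{n-1}$ (in which $\tau_n$ is a positive multiple of the previous, already-bounded-below term) and the case where the middle term is active (in which $\tau_n \ge \dfrac{\psi^2}{9(L^2+\beta\psi\lVert K\rVert^2)\tau_{n-2}}\ge \dfrac{\psi^2}{9(L^2+\beta\psi\lVert K\rVert^2)\tau_{\max}}$, using $\tau_{n-2}\le\tau_{\max}$). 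The convention $\tfrac{0}{0}=\infty$ when $x_n=x_{n-1}$ only makes $\tau_n$ larger, so it poses no threat to the lower bound.

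Once $\tau_n\ge c>0$ is established, the bound on $\theta_n$ follows quickly: since $\tau_n\le\tau_{\max}$ and $\tau_{n-1}\le\tau_{\max}$ for all $n$, and $\tau_{n-1}\ge c$, we have $\theta_n = \psi\tau_n/\tau_{n-1} \ge \psi c/\tau_{\max} > 0$, giving a uniform positive lower bound. I expect the main obstacle to be the inductive bookkeeping in the two-step recursion: because $\tau_n$ is tied to $\tau_{n-2}$ rather than $\tau_{n-1}$, the induction must carry information about two consecutive terms, and one must verify the base cases ($\tau_0,\tau_1$) and confirm that the constant $c$ is chosen so that both branches of the $\min$ respect it simultaneously. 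Care is also needed to ensure the upper bound $\tau_{n-2}\le\tau_{\max}$ is always available, which is immediate from the $\min$ with $\tau_{\max}$ in \eqref{eq:alg2_c}.
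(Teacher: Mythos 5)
Your overall strategy is the same as the paper's: use boundedness of $(x_n)$ together with local smoothness to obtain a uniform constant $L$ with $\bar L_n\le L$ along the iterates, then run an induction on the stepsize recursion \eqref{eq:alg2_c} after substituting $\theta_{n-1}=\psi\tau_{n-1}/\tau_{n-2}$, and finally read off the bound for $\theta_n$ from $\theta_n=\psi\tau_n/\tau_{n-1}$ together with the cap $\tau_{\max}$. (The paper compresses exactly this calculation into a citation of an external lemma; your proposal fills it in, and your constants, which retain the dependence on $\tau_0,\tau_1$, are if anything more careful than the ones the paper displays.)

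There is, however, one point where your justification does not hold up as written, and it is precisely the crux of the induction. In the branch $\tau_n=\rho\tau_{n-1}$ you argue that $\tau_n$ is ``a positive multiple of the previous, already-bounded-below term''. Positivity of $\rho$ is not enough: if $\rho<1$, a positive multiple of a quantity bounded below by $c$ is only bounded below by $\rho c$, so the fixed constant $c$ is not preserved through the induction; in fact the conclusion itself would then be false, since the $\rho$-branch can stay active for all large $n$ (the middle term is proportional to $1/\tau_{n-2}$ and therefore grows as the stepsizes shrink), giving geometric decay $\tau_n=\rho^{n-k}\tau_k\to 0$. What saves the argument is that $\rho=\psi^{-1}+\psi^{-2}=(\psi+1)/\psi^{2}\ge 1$ exactly when $\psi^{2}\le\psi+1$, i.e.\ when $\psi\in(1,\phi]$, which is the standing assumption of Algorithm \ref{algorithm 2}; you must invoke this to conclude $\rho\tau_{n-1}\ge\tau_{n-1}\ge c$. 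A second, much smaller bookkeeping point: $\tau_0$ is an arbitrary positive initialization and is not capped by $\tau_{\max}$, so the inequality $\tau_{n-2}\le\tau_{\max}$ used in the middle branch is only ``immediate from the $\min$'' for $n\ge 3$; for $n=2$ (and similarly when bounding $\theta_1$) one should replace $\tau_{\max}$ by $\max\{\tau_0,\tau_{\max}\}$ in the constants. With these two repairs your induction goes through and yields the proposition.
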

 \begin{proof}
        Since the sequence $(x_n)$ is bounded, by definition of local smoothness of $h$, there exist a constant $\bar L>0$ such that $\lVert \nabla h(x_n)-\nabla h(x_{n-1})\rVert\leq\bar L\lVert x_n-x_{n-1}\rVert$ $\forall n$. Thereafter, doing a similar calculation as in \cite[Lemma 4.2]{tam2023bregman}, we obtain
        \begin{equation*}
            \tau_n\geq\dfrac{\psi}{9(\bar L^2+\beta\psi\|K\|^2)}\dfrac{1}{\tau_{\max}}~~\text{and}~~ \theta_n\geq\dfrac{\psi^2}{9(\bar L^2+\beta\psi \|K\|^2)}\dfrac{1}{{\tau_{\max}^2}}~~\forall n.
        \end{equation*}
    \end{proof}
\begin{assumption}\label{assumption_5.1}
    Suppose that $f$ and $g$ are proper, convex, and lower semicontinuous. The function $h:\mathbb{X}\to\mathbb{R}$ is convex and locally smooth, i.e., for all compact sets $D$, where $D\subset\mathbb{X}$, there exists $L_D>0$ such that
\begin{equation*}
    \lVert \nabla h (x)-\nabla h (y) \rVert\leq \bar L_D\lVert x-y\rVert,~\forall x,y\in D.
\end{equation*}
\end{assumption}
\vspace{8pt}
\begin{lemma}\label{lemma_5.0.1}
 Under Assumptions \ref{assumption1}, \ref{assumption_2} and \ref{assumption_5.1}, let  $\{(z_n,x_n,w_n,y_n,\tau_n)\}$ be the sequence generated by Algorithm \ref{algorithm 2}. Suppose that $(x^*,w^*,y^*)\in\mathbf{\Omega}$ is a saddle point of $\mathscr{L}$. Then, for any $y\in\mathbb{Y}$, the following holds as $n\geq 1$
    \begin{multline}\label{adap_4}
        2\tau_n\mathbb{J}(x_n,w_n,y)+ \dfrac{\psi}{\psi-1}\lVert x^*-z_{n+2} \rVert^2 + \dfrac{1}{\beta}\lVert y-y_{n}\rVert^2+\dfrac{\theta_n}{3}\lVert x_n-x_{n+1} \rVert^2 \\\leq
        \dfrac{\psi}{\psi-1}\lVert x^*-z_{n+1} \rVert^2 +\dfrac{1}{\beta}\lVert y-y_{n-1} \rVert^2 + \dfrac{\theta_{n-1}}{3}\lVert x_n-x_{n-1} \rVert^2- {\theta_n}\lVert x_n-z_{n+1} \rVert^2.
    \end{multline}
\end{lemma}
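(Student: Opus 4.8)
The plan is to mirror the derivation of Lemma~\ref{Lemma4.1.1} almost verbatim up to the point where the two cross terms are estimated, and then to replace the monotonicity-based bounds of Algorithm~\ref{algorithm 1} with the adaptive estimates recorded in Remark~\ref{remark_5.2}. First I would write the two proximal inequalities for $f$ at the levels $\tau_n$ and $\tau_{n-1}$ together with the one for $g$ via Lemma~\ref{usefullemma3}, use the relation $x_n-z_n=\psi(x_n-z_{n+1})$, and combine them with the multiplier $\tau_n/\tau_{n-1}$. Since $\theta_n=\psi\tau_n/\tau_{n-1}$ by Step~4 of Algorithm~\ref{algorithm 2}, the coefficient $\psi\bar\theta_n$ appearing in Lemma~\ref{Lemma4.1.1} is now exactly $\theta_n$; hence the identical collection of the $K$-terms and gradient terms as in \eqref{eq:Ky-collect}--\eqref{eq:grad-collect}, the addition of the $y$-update identity \eqref{eq:y-id}, and the use of convexity of $h$ yield the analogue of \eqref{lemma4.1_2} with $\psi\bar\theta_n$ replaced by $\theta_n$. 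Applying the cosine rule \eqref{14a} to the first three inner products then produces, in particular, the terms $-\tfrac{\theta_n}{2}\|x_n-z_{n+1}\|^2$, $\tfrac{\theta_n}{2}\|x_{n+1}-z_{n+1}\|^2$ and $-\tfrac{1}{2\beta}\|y_n-y_{n-1}\|^2$.

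The substantive difference lies in estimating the two remaining cross terms, and this is where I would invoke Remark~\ref{remark_5.2} rather than a non-increasing stepsize argument. For the gradient term I would write $\|\nabla h(x_n)-\nabla h(x_{n-1})\|=\bar L_n\|x_n-x_{n-1}\|$ and use $\tau_n\bar L_n\le\tfrac13\sqrt{\theta_n\theta_{n-1}}$, then split by the weighted estimate $\sqrt{\theta_{n-1}}\,a\cdot\sqrt{\theta_n}\,b\le\tfrac12(\theta_{n-1}a^2+\theta_n b^2)$ with $a=\|x_n-x_{n-1}\|$, $b=\|x_n-x_{n+1}\|$, giving the bound $\tfrac{\theta_{n-1}}{6}\|x_n-x_{n-1}\|^2+\tfrac{\theta_n}{6}\|x_n-x_{n+1}\|^2$. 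For the coupling term I would use $\tau_n\|K\|\le\tfrac13\sqrt{\theta_n\theta_{n-1}/(\beta\psi)}$ together with Young's inequality, choosing the free parameter so that the weight on $\|y_n-y_{n-1}\|^2$ is exactly $\tfrac{1}{2\beta}$; this cancels the negative $-\tfrac{1}{2\beta}\|y_n-y_{n-1}\|^2$ term from the cosine rule and leaves only $\tfrac{\theta_n\theta_{n-1}}{18\psi}\|x_{n+1}-x_n\|^2$. Finally I would eliminate $\|x^*-x_{n+1}\|^2$ using the identity \eqref{derived_equality2}.

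It then remains to verify the two sign conditions that close the argument, both driven by the cap $\tau_n\le\rho\tau_{n-1}$ with $\rho=\psi^{-1}+\psi^{-2}$, which forces $\theta_n\le\psi\rho=1+\tfrac1\psi$ for every $n\ge1$. The coefficient of $\|x_{n+1}-z_{n+1}\|^2$ collapses to $\tfrac12\bigl(\theta_n-1-\tfrac1\psi\bigr)\le0$, so that term may be discarded; the net coefficient of $\|x_n-x_{n+1}\|^2$ equals $-\tfrac{\theta_n}{3}+\tfrac{\theta_n\theta_{n-1}}{18\psi}$, which is $\le-\tfrac{\theta_n}{6}$ precisely because $\theta_{n-1}\le1+\tfrac1\psi<3\psi$. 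Moving this negative term across and doubling yields $\tfrac{\theta_n}{3}\|x_n-x_{n+1}\|^2$ on the left and $\tfrac{\theta_{n-1}}{3}\|x_n-x_{n-1}\|^2$ on the right, which is exactly \eqref{adap_4}. The main obstacle is the bookkeeping: since the stepsizes may now grow, nothing can be absorbed via monotonicity, so the precise constants hidden in \eqref{eq:alg2_c} (the factor $9$, which propagates into the $\tfrac13$ estimates of Remark~\ref{remark_5.2}) must be matched term by term against the $\theta_n$-weighted squares produced by the cosine rule. A minor point is the index $n=1$, where $\theta_{n-1}=\theta_0$ is a free parameter and the bound $\theta_{n-1}\le1+\tfrac1\psi$ is unavailable; this is accommodated by choosing $\theta_0\le3\psi$ at initialization, the estimate being automatic for $n\ge2$.
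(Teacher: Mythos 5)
Your proposal is correct and takes essentially the same route as the paper's proof: it reproduces the derivation of Lemma~\ref{Lemma4.1.1} with $\psi\bar\theta_n$ replaced by $\theta_n$, bounds the two cross terms via the estimates $\tau_n\bar L_n\le\tfrac13\sqrt{\theta_n\theta_{n-1}}$ and $\tau_n\|K\|\le\tfrac13\sqrt{\theta_n\theta_{n-1}/(\beta\psi)}$ of Remark~\ref{remark_5.2}, and uses the cap $\theta_n\le\psi\rho=1+\tfrac1\psi$ to discard the $\|x_{n+1}-z_{n+1}\|^2$ term. The only departures are cosmetic: the paper's Young split places weight $\tfrac{\theta_{n-1}}{3\beta\psi}$ (rather than exactly $\tfrac{1}{2\beta}$) on $\|y_n-y_{n-1}\|^2$ and discards that term using $\theta_{n-1}\le 3\psi$, whereas you cancel it exactly and push the surplus into the $\|x_n-x_{n+1}\|^2$ coefficient; and your explicit caveat that the case $n=1$ requires $\theta_0\le 3\psi$ addresses a point the paper's proof passes over silently.
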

\begin{proof}
Following a similar arguments as in Lemma \ref{Lemma4.1.1}, we obtain
    \begin{multline}\label{adap_1}
        2\tau_n\mathbb{J}(x_n,w_n,y)+  \lVert x^*-x_{n+1} \rVert^2 + \dfrac{1}{\beta}\lVert y-y_{n} \rVert^2\leq \lVert x^*-z_{n+1} \rVert^2 \\ +
        \dfrac{1}{\beta}\lVert y-y_{n-1} \rVert^2 -\lVert x_{n+1}-z_{n+1} \rVert
        ^2
        - \theta_n\lVert x_n-z_{n+1} \rVert^2-\theta_n\lVert x_n-x_{n+1} \rVert^2 \\ + \theta_n\lVert x_{n+1}-z_{n+1} \rVert^2 -\dfrac{1}{\beta}\lVert y_n-y_{n-1} \rVert^2 + 2\tau_n\langle y_n-y_{n-1},\, K(x_{n+1}-x_n)\rangle  \\ +
        2\tau_n\langle \nabla h(x_n)-\nabla h(x_{n-1}),\, x_n-x_{n+1}\rangle.
    \end{multline}
 Using the Cauchy-Schwarz inequality and Remark \ref{remark_5.2}, we have
  \begin{equation}\label{adap_2}
     \begin{aligned}
        2\tau_n\langle \nabla h(x_n)-\nabla h(x_{n-1}), x_n-x_{n+1}\rangle&\leq  
         2\tau_n\lVert \nabla h(x_n)-\nabla h(x_{n-1}) \rVert \lVert x_n-x_{n+1} \rVert \\ & 
         \leq2\tau_n\bar L_n\lVert  x_n - x_{n-1} \rVert \lVert x_n-x_{n+1} \rVert\\ &
          \leq\frac{2\sqrt{\theta_n\theta_{n-1}}}{3}\lVert  x_n - x_{n-1} \rVert \lVert x_n-x_{n+1} \rVert\\&
          \leq\dfrac{\theta_n}{3}\lVert x_n-x_{n+1}\rVert^2+\dfrac{\theta_{n-1}}{3}\lVert x_n-x_{n-1}\rVert^2.
     \end{aligned}
 \end{equation}
Similarly, by Remark \ref{remark_5.2}, we obtain
\begin{equation}\label{adap_3}
    \begin{aligned}
        2\tau_n\langle y_n-y_{n-1},\, K(x_{n+1}-x_n)\rangle&\leq 2\tau_n \lVert Kx_n-Kx_{n+1} \rVert \hspace{.1cm} \lVert y_n-y_{n-1} \rVert \\ & 
        \leq2\tau_n\|K\|\lVert x_n-x_{n+1} \rVert\lVert y_n-y_{n-1} \rVert\\&
        \leq\frac{2}{3}\sqrt{\frac{\theta_n\theta_{n-1}}{\beta\psi}}\lVert x_n-x_{n+1} \rVert\lVert y_n-y_{n-1} \rVert\\&
        \leq \dfrac{\theta_n}{3}\lVert x_n-x_{n+1} \rVert^2+\dfrac{\theta_{n-1}}{3\beta\psi}\lVert y_n-y_{n-1} \rVert^2.
    \end{aligned}
\end{equation}
Now, combining (\ref{adap_1}), (\ref{adap_2}), (\ref{adap_3}), and (\ref{derived_equality2}), we derive
    \begin{multline}\label{adap_5}
        2\tau_n\mathbb{J}(x_n,w_n,y)+ \dfrac{\psi}{\psi-1}\lVert x^*-z_{n+2} \rVert^2 + \dfrac{1}{\beta}\lVert y-y_{n} \rVert^2\leq \dfrac{\psi}{\psi-1}\lVert x^*-z_{n+1} \rVert^2\\+
        \dfrac{1}{\beta}\lVert y-y_{n-1} \rVert^2 + \left(\theta_n-1-
        \dfrac{1}{\psi}\right)\lVert x_{n+1}-z_{n+1} \rVert
        ^2 - {\theta_n}\lVert x_n-z_{n+1} \rVert^2 + \dfrac{\theta_{n-1}}{3}\lVert x_n-x_{n-1} \rVert^2\\
        -\left(\theta_n-\dfrac{\theta_n}{3}-\dfrac{\theta_n}{3}\right)\lVert x_n-x_{n+1} \rVert^2 
         -\dfrac{1}{\beta}\left(1-\dfrac{\theta_{n-1}}{3\psi}\right)\lVert y_n-y_{n-1} \rVert^2.
    \end{multline}
Since $\tau_n \leq \rho \tau_{n-1}~~\forall n$, it follows that $\theta_n \leq \psi \rho \leq 1 + \dfrac{1}{\psi}~~\forall n$. Furthermore, we have 
\begin{equation*}
    1 - \dfrac{\theta_{n-1}}{3\psi} \geq 1 - \dfrac{1}{3} \left(\dfrac{1}{\psi} + \dfrac{1}{\psi^2}\right) > 0, ~~\forall\psi\in (1, \phi].
\end{equation*}
Finally, observing $\theta_n-\dfrac{\theta_n}{3}-\dfrac{\theta_n}{3}=\dfrac{\theta_n}{3}$ together with~(\ref{adap_5}), completes the proof of Lemma~\ref{lemma_5.0.1}.
\end{proof}
 \begin{theorem}
  Under Assumptions \ref{assumption1}, \ref{assumption_2} and \ref{assumption_5.1}, suppose that $\{(z_n,x_n,w_n,y_n,\tau_n)\}$ is the sequence generated by Algorithm \ref{algorithm 2}. Let $(x^*, w^*, y^*)\in\Omega.$
   \begin{enumerate}
       \item Then, the sequence $\{(x_n,y_n)\}$ converges to a solution of (\ref{1.2}).
       \item  There exists a constant $C_1>0$ such that
    \begin{equation*}
       \left | \Phi(\Tilde{x}_N,\, \Tilde{w}_N)- \Phi(x^*, w^*)\right| \leq \frac{C_1}{N}\hspace{.17cm}and \hspace{.17cm}\left | K\Tilde{x}_N- \Tilde{w}_N \right|\leq \frac{2C_1}{\delta N}
    \end{equation*}
holds, where $\delta$ is a positive constant satisfying $\delta\geq 2\|y^*\|$, $N\geq 1$ and
$\Tilde x_N$, $\Tilde y_N$ are defined in \eqref{x_ntilde and w_ntilde}.    
   \end{enumerate}
\end{theorem}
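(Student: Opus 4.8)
The plan is to follow the template already established for Algorithm~\ref{algorithm 1} in Theorems~\ref{thm_3.1.1} and~\ref{thm_4.0.1}, with one crucial adjustment: since Proposition~\ref{prop_1} delivers positive lower bounds on $(\tau_n)$ and $(\theta_n)$ only \emph{after} boundedness of $(x_n)$ is known, I must secure that boundedness first, using only the conclusions of Lemma~\ref{usefullemma2} that survive without any stepsize control.

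For part~(1), I would set $y=y^*$ in the estimate \eqref{adap_4} of Lemma~\ref{lemma_5.0.1}. Since $(x^*,w^*,y^*)$ is a saddle point, $\mathbb{J}(x_n,w_n,y^*)\ge 0$ by \eqref{primal_dual_gap}, so \eqref{adap_4} reduces to $p_{n+1}(y^*)\le p_n(y^*)-q_n$, where
\[
p_n(y)=\tfrac{\psi}{\psi-1}\|x^*-z_{n+1}\|^2+\tfrac1\beta\|y-y_{n-1}\|^2+\tfrac{\theta_{n-1}}{3}\|x_n-x_{n-1}\|^2,\qquad q_n=\theta_n\|x_n-z_{n+1}\|^2.
\]
Because $\theta_n=\psi\tau_n/\tau_{n-1}>0$ for every $n$, we have $q_n\ge0$ unconditionally, so Lemma~\ref{usefullemma2} applies and yields that $\lim_n p_n(y^*)$ exists and $\sum_n q_n<\infty$. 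The finiteness of $\lim_n p_n(y^*)$ forces $(z_n)$ to be bounded, and inverting the update as $x_{n-1}=(\psi z_n-z_{n-1})/(\psi-1)$ then shows $(x_n)$ is bounded as well.

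With boundedness of $(x_n)$ in hand, I can invoke Proposition~\ref{prop_1} to obtain a constant $\underline\theta>0$ with $\theta_n\ge\underline\theta$ for all $n$. Combined with $\theta_n\|x_n-z_{n+1}\|^2\to0$ this gives $\|x_n-z_{n+1}\|\to0$; since $x_n-z_{n+1}=\tfrac1\psi(x_n-z_n)$, also $\|x_n-z_n\|\to0$ and hence $\|x_n-x_{n-1}\|\to0$. From here the argument is identical to Theorem~\ref{thm_3.1.1}: extract a subsequence $(x_{n_k},y_{n_k})\to(\tilde x,\tilde y)$, pass to the limit in the prox optimality conditions using lower semicontinuity of $f$ and $g^*$ to show $(\tilde x,\tilde y)$ solves \eqref{1.2}, and upgrade subsequential to full convergence by putting $(x^*,y^*)=(\tilde x,\tilde y)$ in $p_n$, whose limit exists and vanishes along the subsequence, hence vanishes entirely. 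For part~(2), I use \eqref{adap_4} for arbitrary $y$ to get $2\tau_n\mathbb{J}(x_n,w_n,y)\le p_n(y)-p_{n+1}(y)$; boundedness of $(x_n)$ lets me apply Proposition~\ref{prop_1} for a uniform bound $\tau_n\ge\underline\tau>0$, so replacing $\tau_n$ by $\underline\tau$ and telescoping over $n=1,\dots,N$ gives $2\underline\tau\sum_{n=1}^N\mathbb{J}(x_n,w_n,y)\le p_1(y)$. Joint convexity of $(x,w)\mapsto\mathbb{J}(x,w,y)$ with \eqref{x_ntilde and w_ntilde} yields $\mathbb{J}(\tilde x_N,\tilde w_N,y)\le p_1(y)/(2\underline\tau N)$; bounding $p_1(y)$ uniformly for $\|y\|\le\delta$ by an $N$-independent constant and maximizing over that ball converts the bilinear term into $\delta\|K\tilde x_N-\tilde w_N\|$, and combining with $\Phi(x^*,w^*)-\Phi(\tilde x_N,\tilde w_N)\le\langle y^*,K\tilde x_N-\tilde w_N\rangle\le\tfrac\delta2\|K\tilde x_N-\tilde w_N\|$ reproduces both claimed bounds exactly as in Theorem~\ref{thm_4.0.1}.

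The main obstacle is precisely the ordering flagged above: unlike Algorithm~\ref{algorithm 1}, where Proposition~\ref{tau_n_bdd_below} provides an \emph{a priori} positive lower bound on the stepsizes, here Proposition~\ref{prop_1} is conditional on boundedness of $(x_n)$. One must therefore extract only the stepsize-agnostic conclusions of Lemma~\ref{usefullemma2} (existence of $\lim p_n$ and summability of $q_n$, which require nothing beyond $q_n\ge0$), use them to bound $(z_n)$ and hence $(x_n)$, and only afterwards feed boundedness back into Proposition~\ref{prop_1} to recover the decisive bounds $\theta_n\ge\underline\theta$ and $\tau_n\ge\underline\tau$. Verifying the joint convexity of $\mathbb{J}$ and the $N$-independence of the final constant is routine.
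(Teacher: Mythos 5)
Your proposal is correct and follows essentially the same route as the paper's proof: the monotone estimate $p_{n+1}(y)\leq p_n(y)-q_n$ from Lemma~\ref{lemma_5.0.1}, Lemma~\ref{usefullemma2}, Proposition~\ref{prop_1}, the subsequential-limit and $\lim_n p_n(\bar y)=0$ argument of Theorem~\ref{thm_3.1.1}, and the telescoping plus joint-convexity argument of Theorem~\ref{thm_4.0.1} for the rate. Your one refinement --- deriving boundedness of $(z_n)$, and hence of $(x_n)$ via $x_{n-1}=(\psi z_n - z_{n-1})/(\psi-1)$, directly from finiteness of $\lim_n p_n$ \emph{before} invoking Proposition~\ref{prop_1} --- is actually a cleaner ordering than the paper's own presentation, which cites Proposition~\ref{prop_1} to conclude $\lVert x_n - z_{n+1}\rVert \to 0$ before it records the boundedness of the iterates on which that proposition depends.
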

\begin{proof} (1) Since $(x^*,w^*,y^*)$ is a saddle point of $\mathscr{L}$, it follows that $2\tau_n\mathbb{J}(x_n,w_n,y^*)\geq0$, $\forall n$. Now, (\ref{adap_4}) can be written as  
$$p_{n+1}(y)\leq p_n(y)- q_n~~\forall n,$$
where 
\begin{equation}\label{Theo4.1_1}
    \begin{aligned}
    & p_{n}(y) \coloneqq  \dfrac{\psi}{\psi-1}\lVert x^*-z_{n+1} \rVert^2+\dfrac{1}{\beta}\lVert y-y_{n-1} \rVert^2+\frac{\theta_{n-1}}{3}\lVert x_n-x_{n-1} \rVert^2 ,\\ &
    q_n \coloneqq {\theta_n}\lVert x_n-z_{n+1} \rVert^2.
    \end{aligned}
    \end{equation}
By using Lemma \ref{usefullemma2}, we obtain $\lim_{n\to\infty} p_n(y)\in \mathbb{R}$ and $\lim_{n\to\infty} q_n = 0$. Moreover, from Proposition \ref{prop_1}, we derive that $\lim_{n\to\infty} \lVert x_n - z_{n+1} \rVert^2 = 0$. Similarly, like in Theorem \ref{thm_3.1.1}, combining with Proposition \ref{prop_1}, we can show that $\lim_{n\to\infty} \lVert x_n - x_{n-1} \rVert^2 = 0$. Since $\lim_{n\to\infty} p_n(y) \in \mathbb{R}$ for any $y$ and $\lim_{n\to\infty} q_n = 0$, both imply that $\{x_n\}$, $\{y_n\}$ and $\{z_n\}$ are bounded sequences. Let $(\bar x, \bar y)$ be the subsequential limit of $\{(x_n,y_n)\}$. Then there exists a subsequence $\{(x_{n_{k}},y_{n_{k}})\}$ of $\{(x_n,y_n)\}$ such that $\{(x_{n_{k}},y_{n_{k}})\}$ converges to $(\bar x, \bar y)$, i.e., $\lim_{n\to\infty} x_{n_{k}}= \bar x$ and $\lim_{n\to\infty} y_{n_{k}}= \bar y$. Noting, $\lim_{n\to\infty}\lVert x_n-z_{n+1} \rVert^2=0$ implies $\lim_{n\to\infty} z_{n_{k}}= \bar x$. Furthermore, using Lemma \ref{usefullemma3} similarly like in Theorem \ref{thm_3.1.1}, for all $(x,y)\in\mathbb{X}\times\mathbb{Y}$, we have
\begin{equation*}%\label{theorem4.1_4}
    \begin{aligned}
        \langle x_{n_{k}}-z_{n_{k}}+\tau_{n_{k}-1}K^*y_{n_{k}-1}+\tau_{n_{k}-1}\nabla h(x_{n_{k}-1}), \,x-x_{n_{k}}\rangle&\geq \tau_{n_{k}-1}(f(x_{n_{k}})-f(x)), \\
        \left\langle \frac{1}{\beta}(y_{n_{k}}-y_{n_{k}-1})+\tau_{n_{k}}Kx_{n_{k}}, \, y-y_{n_{k}-1}\right\rangle&\geq \tau_{n_{k}}(g^*(y_{n_{k}})-g^*(y)).
    \end{aligned}
\end{equation*}
Recalling that both $f,g^*$ are lower semi-continuous and letting $k\to\infty$, we derive
 \begin{equation}\label{theo4.1_5}
     \begin{aligned}
            \langle K^*\bar y+\nabla h(\bar x),\, x-\bar x \rangle\geq f(\bar x)-f(x)\hspace{.17cm} &\text{and}~
      \langle K\bar x,\, y-\bar y \rangle\geq g^*(\bar y)-g^*(y).
     \end{aligned}
 \end{equation}
 Equation (\ref{theo4.1_5}) implies that $(\bar x,\bar y)$ is a saddle point of (\ref{1.2}). Thus, all the above validation holds, including equation (\ref{Theo4.1_1}), replacing $x^*$ by $\bar x$ and $y^*$ by $\bar y$. This implies that $\lim_{n\to\infty} p_{n_k}(\bar y) =0$. Furthermore, the sequence $\{p_{n_k}(\bar y)\}$ is monotonically non-increasing and bounded below. Thus, we have $\lim_{n\to\infty} p_{n}(\bar y) =0$. Therefore, $\lim_{n\to\infty} z_{n}= \bar x$ and $\lim_{n\to\infty} y_{n}= \bar y$. Additionally, $\lim_{n\to\infty}\lVert x_n-z_{n+1} \rVert^2=0$ implies that $\lim_{n\to\infty} x_{n}= \bar x$. Hence, this completes the proof.\\
 \\
 (2) For any $y\in\mathbb{Y}$, using the definitions of $q_n$ and $p_n(y)$ in \eqref{Theo4.1_1} and applying \eqref{adap_4}, we obtain
    \begin{equation*}
         2\tau_n\mathbb{J}(x_n,w_n,y)\leq p_{n}(y)-p_{n+1}(y).
    \end{equation*}
 Now, from Proposition \ref{prop_1}, we get $\tau_n\geq\tau\coloneqq\dfrac{\psi}{9(\bar L^2+\beta\psi \|K\|^2)}\dfrac{1}{\tau_{\max}}$, for all $n$. Thus, we have
 \begin{equation*}
      2\tau\mathbb{J}(x_n,w_n,y)\leq p_{n}(y)-p_{n+1}(y).
 \end{equation*}
Furthermore, summing over $n=1,2,\ldots,N-1$, we obtain
\begin{equation*}
\begin{aligned}
   & 2\tau\sum_{n=1}^{N-1}\mathbb{J}(x_n,w_n,y)\leq p_{1}(y)-p_{N}(y) \\&
    \leq p_{1}(y)=\dfrac{\psi}{\psi-1}\lVert x^*-z_{2} \rVert^2+ \dfrac{1}{\beta}\lVert y-y_{0} \rVert^2+\frac{\theta_0}{3}\lVert x_{1}-x_{0} \rVert^2.
\end{aligned}
\end{equation*}
Following a similar argument as in Theorem \ref{thm_4.0.1}, we obtain 
\begin{equation*}
       \left | \Phi(\Tilde{x}_N,\, \Tilde{w}_N)- \Phi(x^*, w^*)\right| \leq \frac{C_1}{N}\hspace{.17cm}\text{and} \hspace{.17cm}\left | K\Tilde{x}_N- \Tilde{w}_N \right|\leq \frac{2C_1}{\delta N},
    \end{equation*}
where $C_1= \frac{1}{2\tau}\left[\dfrac{\psi}{\psi-1}\lVert x^*-z_{2} \rVert^2+ \dfrac{1}{\beta}(\delta+\lVert y_{0} \rVert^2)+\dfrac{\theta_0}{3}\lVert x_{1}-x_{0} \rVert^2\right]$.
\end{proof}
\section{Numerical Experiments}\label{section_6}
In this section, we provide numerical results on LASSO, Logistic regression, Graph Net, Fused LASSO, and Image inpainting problems for Algorithm \ref{algorithm 1} and Algorithm~\ref{algorithm 2}, and compare them with related existing methods. All experiments were executed in \textbf{Python 3.11.13} on a 64-bit Linux system hosted on Google Colab environment, equipped with an \textbf{Intel Xeon CPU @ 2.20 GHz} and \textbf{12.7 GB~RAM}. All source codes used in this paper are available at
\url{https://github.com/soesantanu/Adaptive-GRPDA-Experiments}.

\noindent Unless stated otherwise, we fix $\psi=1.5$, $\rho=\tfrac{1}{\psi}+\tfrac{1}{\psi^{2}}$, and $\tau_{\max}=10^{7}$ for aEGRPDA (Algorithm~\ref{algorithm 2}) and aGRAAL~\cite{malitsky2020golden}. The choices of $\psi$ and $\rho$ are aligned with \cite{malitsky2020golden}, which reported improved numerical performance with the same parameter values. Furthermore, we choose $(\tau,\sigma)$ to satisfy each method’s stepsize requirements: PDHG uses $\tau\sigma\|K\|^2<1$; GRPDA uses $\tau\sigma\|K\|^2<\psi$; Condat–V\~u uses $\tau\sigma\|K\|^2+\tfrac{\tau\bar{L}}{2}\le 1$; and E\text{-}GRPDA uses $\tau\sigma\|K\|^2+2\tau\bar{L}<\psi$. For GRPDA-L, we adopt the linesearch requirements of~\cite{chang2022goldenlinesearch}; and for P\text{-}GRPDA we choose $(\psi,\mu)$ in accordance with~\eqref{extension_on_mu}. In each of our experiments, $F(x_n)$ denotes the objective value evaluated at the iterate $x_n$ generated by the algorithms, and $F^* = \inf_{x} F(x)$ is computed by running the algorithm for a sufficiently large number of iterations when the true solution $x^*$ is unavailable.

%=================================================================

\subsection{The LASSO Problem}\label{lasso}

The $\ell_1$-penalized linear regression or LASSO problem \cite{tibshirani1996regression} is formulated as:
\begin{equation*}
    \min_{x\in\mathbb{R}^n}F(x)\coloneqq\frac{1}{2} \lVert Kx - b \rVert^2 + \lambda \lVert x \rVert_1,
\end{equation*}
where the rows of $K \in \mathbb{R}^{m \times n}$ represent predictor variables, $\lambda$ is a regularization parameter, and $b \in \mathbb{R}^m$ is a given response vector. 
The objective is to find the unknown signal $x\in\mathbb{R}^n$. LASSO is a popular method in statistics for linear regression and is particularly interesting in compressive sensing when $n > m$, i.e., when the rank of $K$ is less than $n$. Comparing LASSO with (\ref{1.1}) gives $g(\cdot)=\frac{1}{2}\|\cdot-b\|^{2}$, $f(\cdot) = \lambda \lVert \cdot \rVert_1$, and $h=0$. Next, following \cite{chang2022grpdarevisited,chang2021goldengrpda,malitsky2018first}, we generate the required data for this problem. The $s$ nonzero coordinates of the true signal $x^*\in\mathbb{R}^n$ are taken randomly using uniform normal distribution $\mathcal{N}(-10,10)$ and the rest are set to $0$. The entries of additional noise $\omega\in\mathbb{R}^m$ are drawn from $\mathcal{N}(0,0.1)$ and we set $b=Kx^*+\omega$. We generate the matrix $K$ in one of the following ways:
\begin{enumerate}[(i)]
        \item First, we construct a  matrix $B$ whose entries are drawn independently from $\mathcal{N}(0,1)$. Then for $q\in(0,1)$, we generate the columns of $K$ as follows: set $K_1=\frac{B_1}{\sqrt{1-q^2}}$ and $K_j = qK_{j-1} + B_{j}$, where $K_j,B_j$ are columns of $K,B$ for $j=2,3,\ldots,n$,  respectively.\label{caseii}
        \item All entries of $K$ are sampled independently from $\mathcal{N}(0,1)$.\label{case1}
    \end{enumerate}
\vspace{0.1cm}
We set the initial values for all algorithms as $x_0 = 0$, $y_0 = -b$, and $\lambda = 0.1$. The remaining parameters are chosen as follows:
\vspace{0.1cm}
\begin{itemize}
    \item \textbf{PDHG}: $\tau = \frac{25}{\|K\|}$, $\sigma = \frac{0.04}{\|K\|}$, where $\|K\|^2 = \lambda_{\max}(K^*K)$.
    
    \item \textbf{GRPDA}: $\phi = 1.618$, $\tau = \frac{\sqrt{\phi}}{\|K\|}$, $\sigma = \frac{\sqrt{\phi}}{\|K\|}$, where $\|K\|^2 = \lambda_{\max}(K^*K)$.
    
    \item \textbf{GRPDA-L}: $\tau_0 = 10$, $\delta = 0.99$, $\mu = 0.7$, $\beta=0.2$ where $\delta$, $\mu$ and $\beta$ appear in the linesearch conditions of GRPDA-L.
    
    \item \textbf{P-GRPDA} (Algorithm~\ref{algorithm 1}): $\tau_0 = 10$, $\psi = 1.76$, $\mu = 0.77236$, $\beta = 0.2$.
\end{itemize}

The stepsizes $\tau$ and $\sigma$ for PDHG and GRPDA are chosen in accordance with their respective stepsize conditions and are tuned to enhance practical convergence; see~\cite{chang2021goldengrpda,chang2022grpdarevisited} for further details. For P\text{-}GRPDA, we empirically observe that taking a larger $\psi$ with $\mu\in[0.70,0.81]$ and $\beta=0.2$ improves performance, which is associated with a faster decay of the iterate gap $\|x_n - z_n\|$. The parameters required for GRPDA-L are selected following~\cite{chang2022goldenlinesearch}. Since primal--dual methods are typically sensitive to the parameter $\beta $, we proceed in two stages to ensure a fair comparison: we first run P\text{-}GRPDA and GRPDA-L using the same values of $\beta$ along with the other algorithms, and then we perform a sensitivity study for P\text{-}GRPDA over a range of $\beta$ values.

%----------------LASSO Figures----------------
\begin{figure}[htbp]
  \centering
  % first row
  \begin{subfigure}[b]{0.42\textwidth}
    \includegraphics[width=\textwidth]{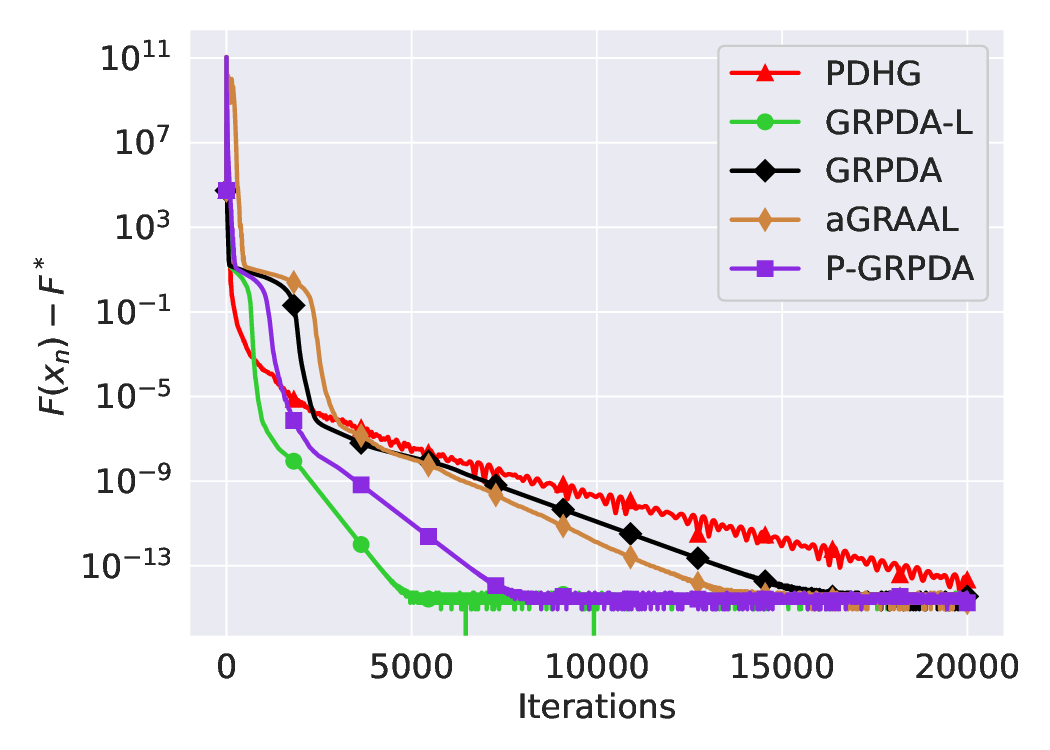}
    \subcaption{$(m,n,s)=(300,1000,10)$}
    \label{fig:lasso1a}
  \end{subfigure}\hfill
  \begin{subfigure}[b]{0.43\textwidth}
    \includegraphics[width=\textwidth]{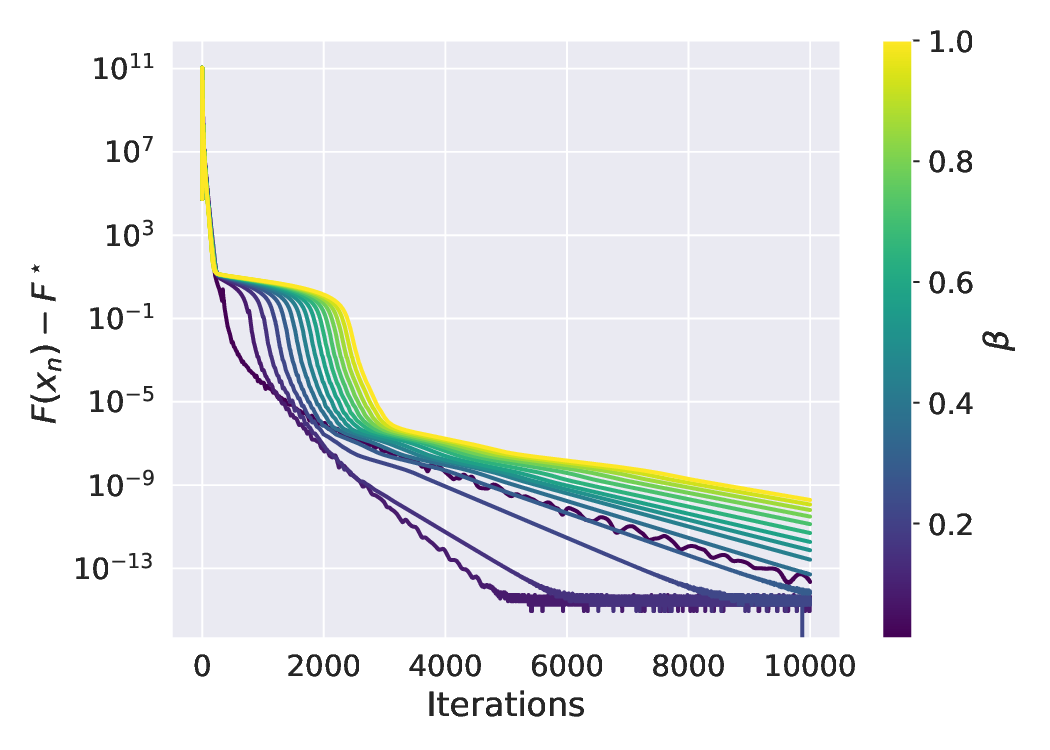}
    \subcaption{$(m,n,s)=(300,1000,10)$}
    \label{fig:lasso1b}
  \end{subfigure}

  \vspace{.3em}

  % second row
  \begin{subfigure}[b]{0.42\textwidth}
    \includegraphics[width=\textwidth]{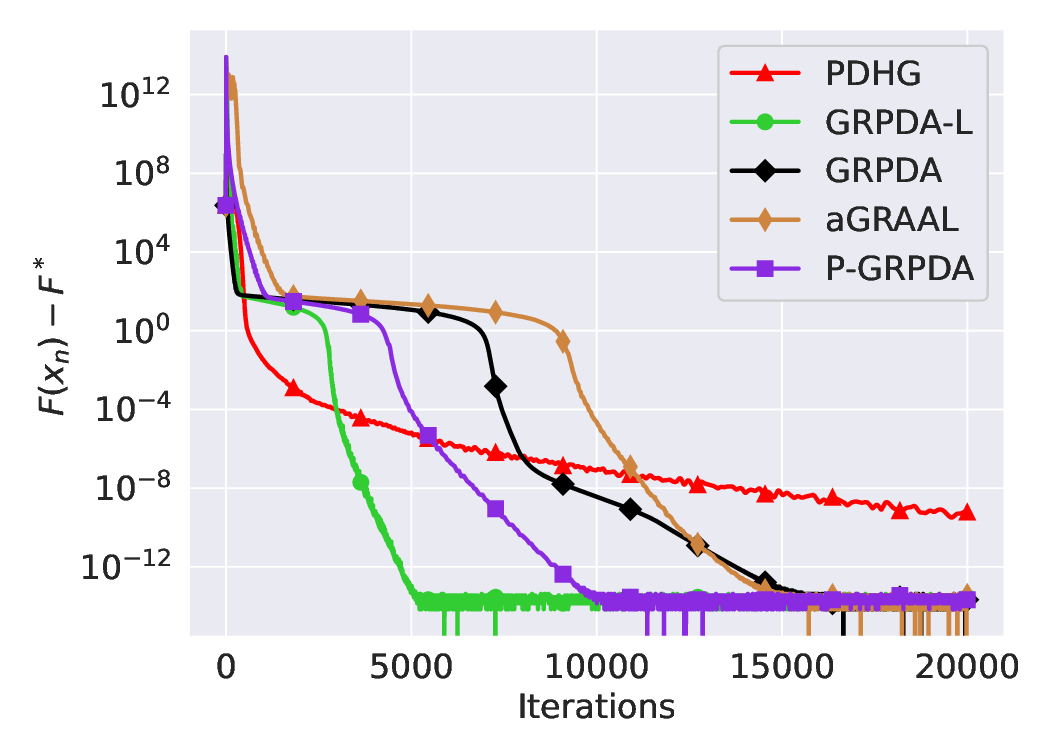}
    \subcaption{$(m,n,q,s)=(1000,2000,\,0.5,100)$}
    \label{fig:lasso1c}
  \end{subfigure}\hfill
  \begin{subfigure}[b]{0.43\textwidth}
    \includegraphics[width=\textwidth]{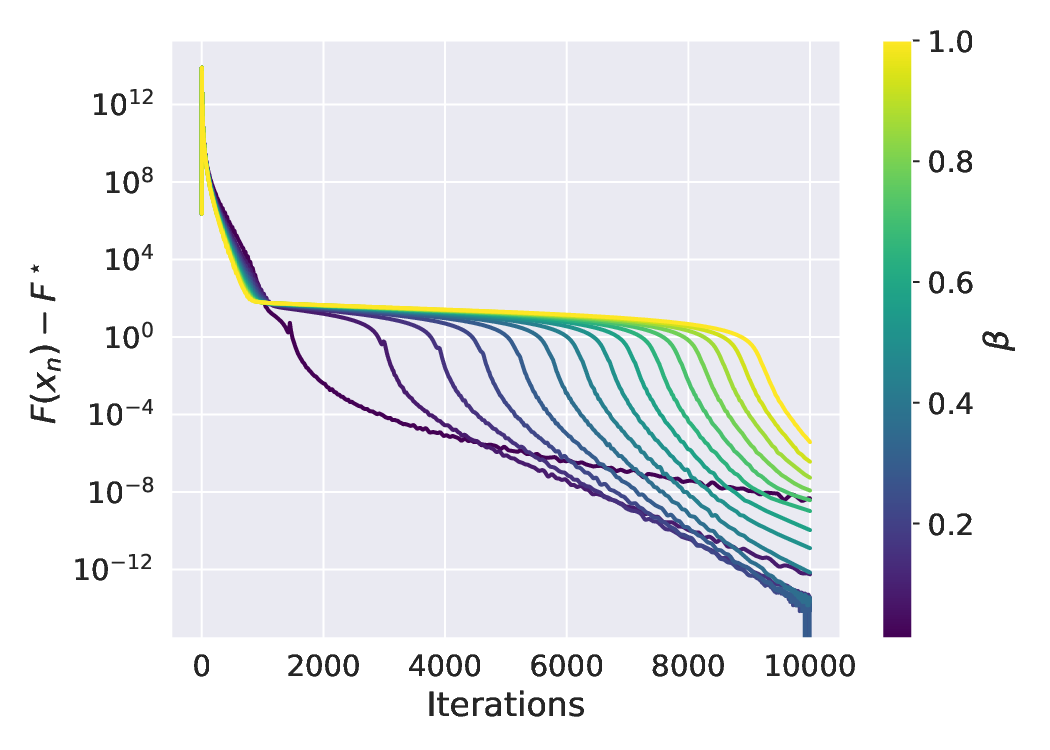}
    \subcaption{$(m,n,q,s)=(1000,2000,\,0.5,100)$}
    \label{fig:lasso1d}
  \end{subfigure}

  \caption{Convergence plot for LASSO of different algorithms. In (a)–(b), the matrix $K$ is generated by scheme (ii), and in (c)–(d) by scheme (i). Here, (b) and (d) illustrate the decay of the objective gap for P-GRPDA on the LASSO problem for different choices of~$\beta$}
  \label{fig:Lasso_Fig_1}
\end{figure}

We plot $F(x_n)-F^\ast$ against the iteration number for all algorithms, where $F^\ast$ is computed as described earlier. From Figure~\ref{fig:Lasso_Fig_1}, we see that, for the same value of $\beta$, GRPDA-L outperforms P\text{-}GRPDA. This behaviour can be attributed to the fact that, in P\text{-}GRPDA, the sequence $(\tau_n)$ is decreasing, which causes the updates to become more conservative and slows down the progress of the iterates. We also report in Figure~\ref{fig:Lasso_Fig_1} the \emph{sensitivity study} mentioned above, in which we run P\text{-}GRPDA for several values of $\beta$ while keeping the remaining parameters fixed. In this study, we observe that values of $\beta$ in the range $(0.1,0.4)$ yield superior performance compared with the other tested choices.
%------------------Logistic regression-----------------
%--------------------------------------------------------------

\subsection{Logistic Regression Problem}
Our next test problem is the regularized logistic regression model
\begin{equation}\label{log_reg}
   \min_{x\in\mathbb{R}^n}F(x)\coloneqq \min_{x\in\mathbb{R}^n}
    \sum_{i=1}^m \log\bigl(1+\exp(-b_i a_i^\top x)\bigr)
    + R(x),
\end{equation}
where $\{(a_i,b_i)_{i=1}^m\}\in\mathbb{R}^n\times\{-1,1\}$ and $R$ is a convex regularizer. 
Although primal--dual splitting is not the most direct approach for this formulation, our methods can still be applied by setting $h(x) := \sum_{i=1}^m \log\!\bigl(1+\exp(-b_i a_i^\top x)\bigr),$ and the nonsmooth component as $f(x)=0$ and $g(x)=R(x)$. The gradient of $h$ is given by
$$
\nabla h(x) = -\sum_{i=1}^m \frac{b_i\,a_i}{1 + \exp(b_i a_i^\top x)}.
$$
By a simple computation, one can show that $h$ is $\frac{1}{4}\|A\|^2_2$- smooth.
In this experiment, we consider two types of regularizers:

\begin{figure}[htbp]
  \centering
  % Row 1: a9a
  \begin{subfigure}[b]{0.33\textwidth}
    \includegraphics[width=\textwidth]{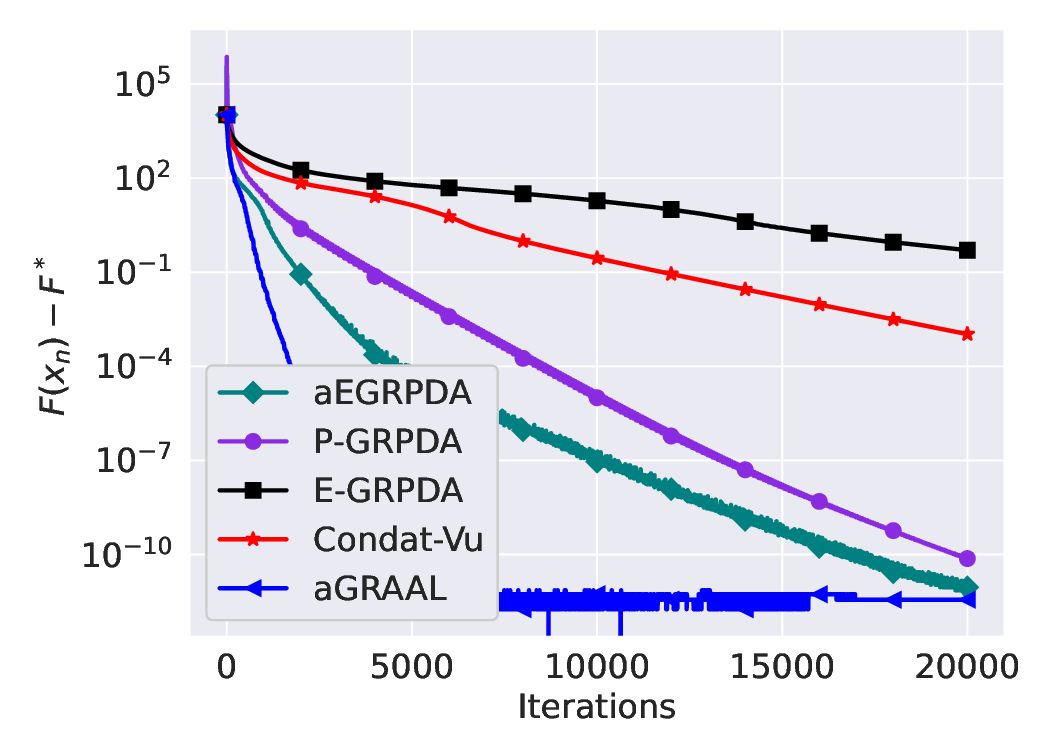}
    \subcaption{a9a}
    \label{fig:slr_a9a_main}
  \end{subfigure}\hfill
  \begin{subfigure}[b]{0.33\textwidth}
    \includegraphics[width=\textwidth]{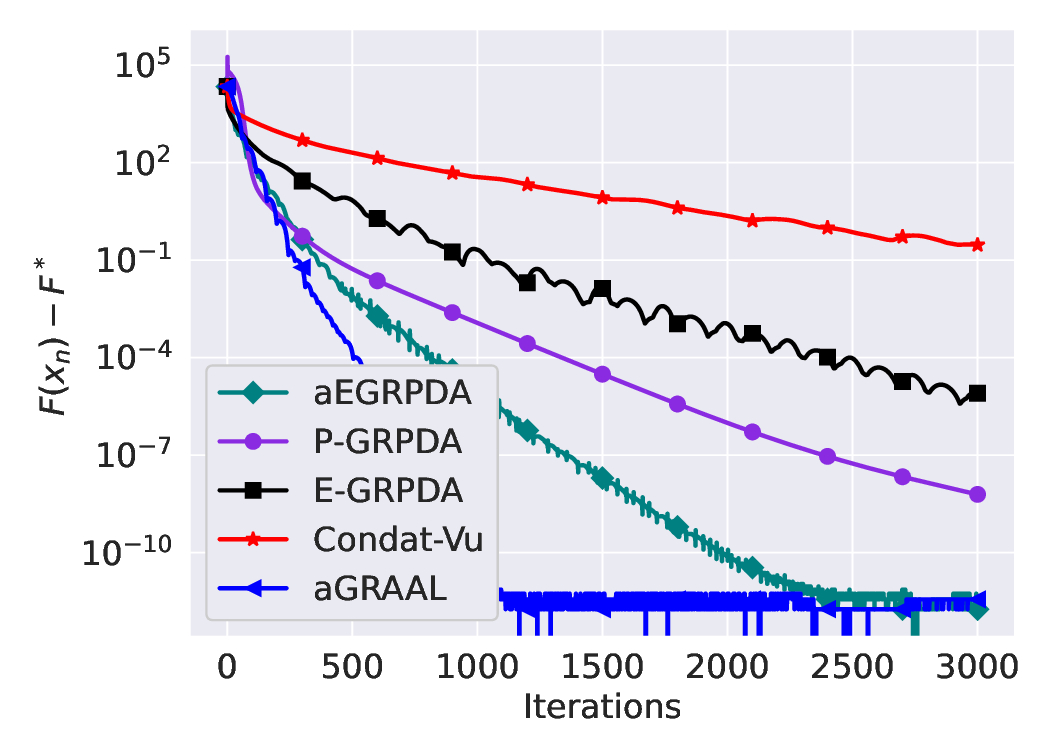}
    \subcaption{ijcnn1}
    \label{fig:slr_a9a_primal}
  \end{subfigure}\hfill
  \begin{subfigure}[b]{0.33\textwidth}
    \includegraphics[width=\textwidth]{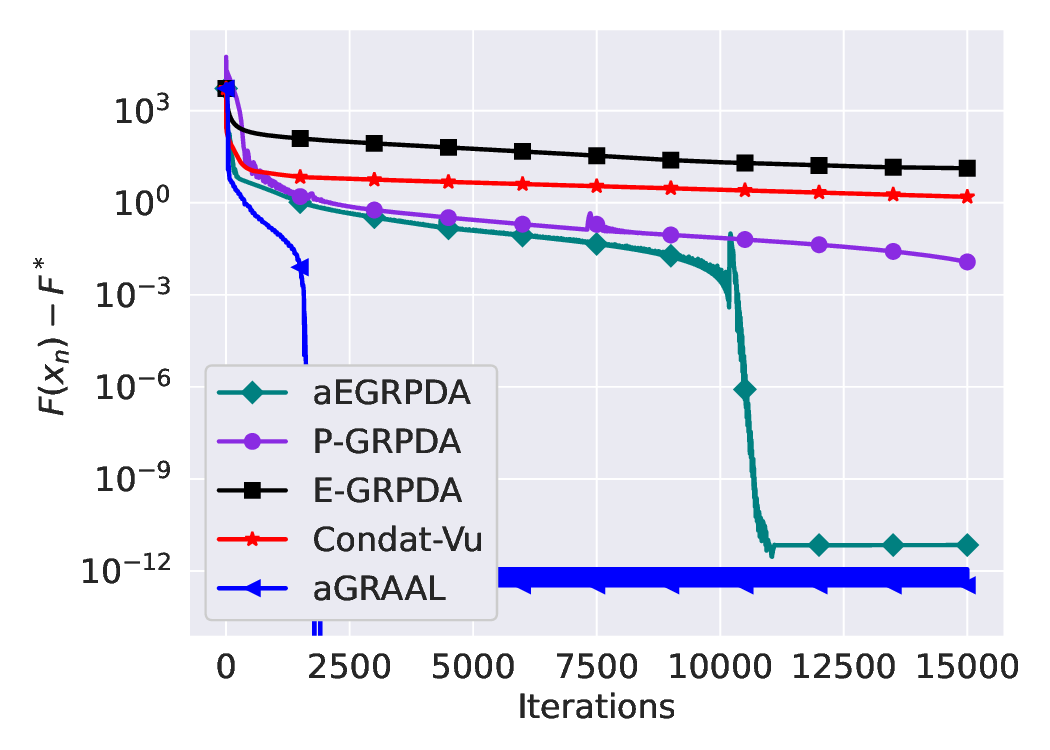}
    \subcaption{mushrooms}
    \label{fig:slr_a9a_dual}
  \end{subfigure}
  \caption{Convergence plots for logistic regression with \textbf{setting 1} on the three LIBSVM datasets \texttt{a9a}, \texttt{ijcnn1}, and \texttt{mushrooms}.}
  \label{fig:slr_all}
\end{figure}

\begin{figure}[htbp]
  \centering
  % Row 1: a9a
  \begin{subfigure}[b]{0.32\textwidth}
    \includegraphics[width=\textwidth]{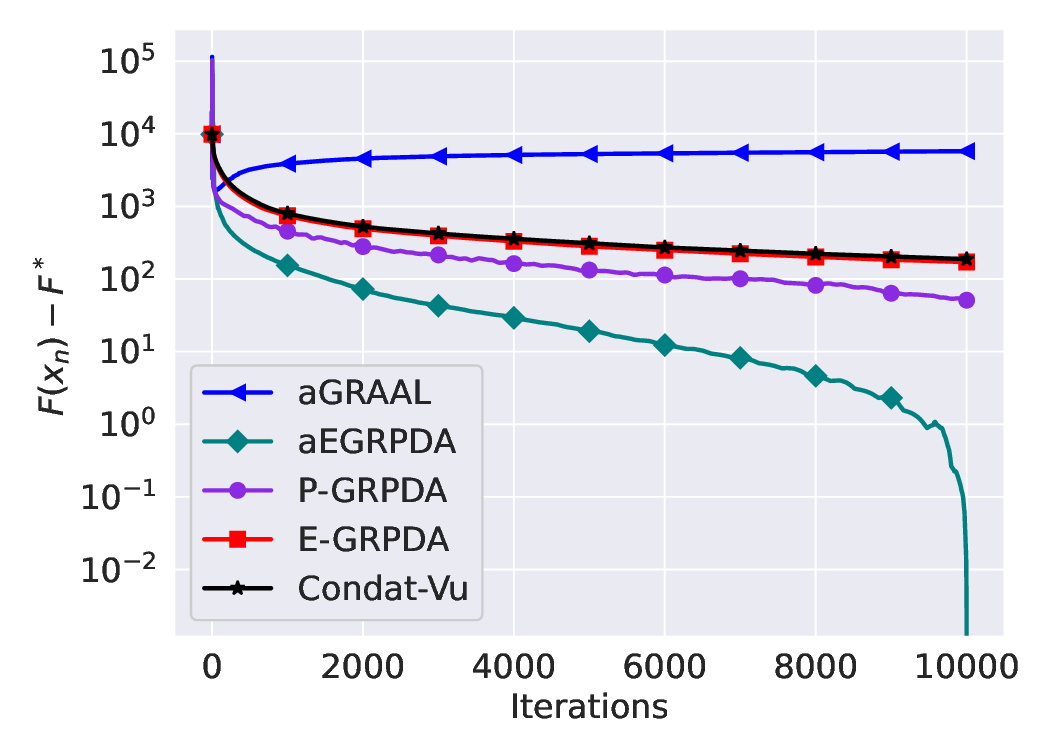}
    \subcaption{a9a}
  \end{subfigure}\hfill
  \begin{subfigure}[b]{0.32\textwidth}
    \includegraphics[width=\textwidth]{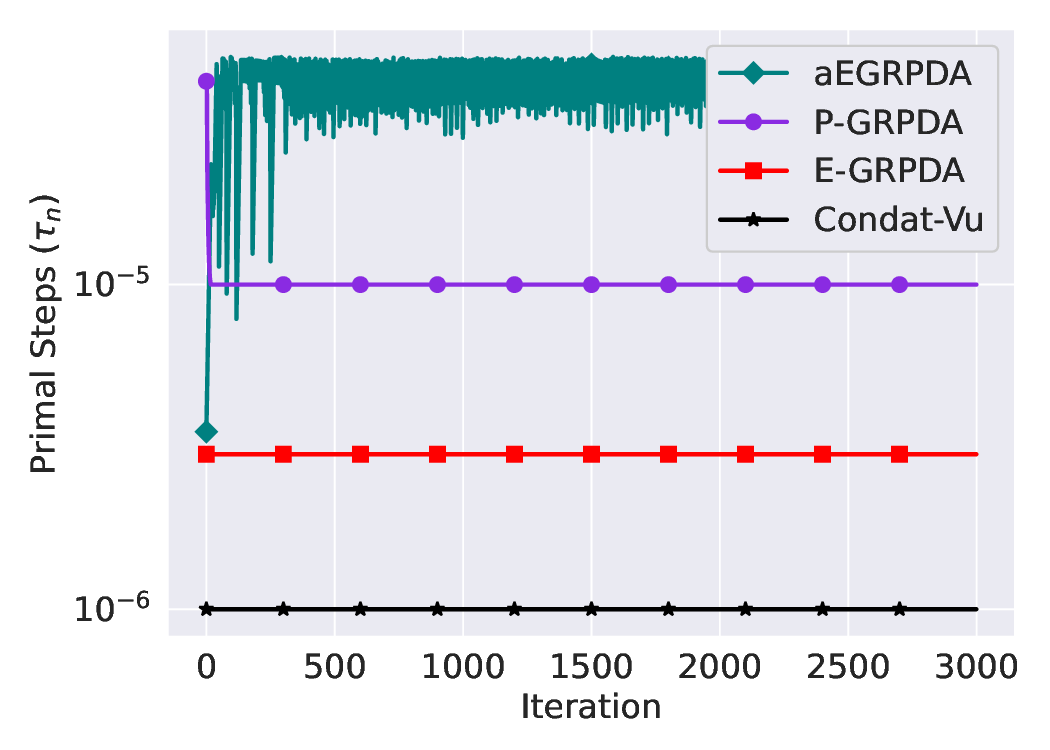}
    \subcaption{Primal stepsize ($\tau_n$)}
  \end{subfigure}\hfill
  \begin{subfigure}[b]{0.32\textwidth}
    \includegraphics[width=\textwidth]{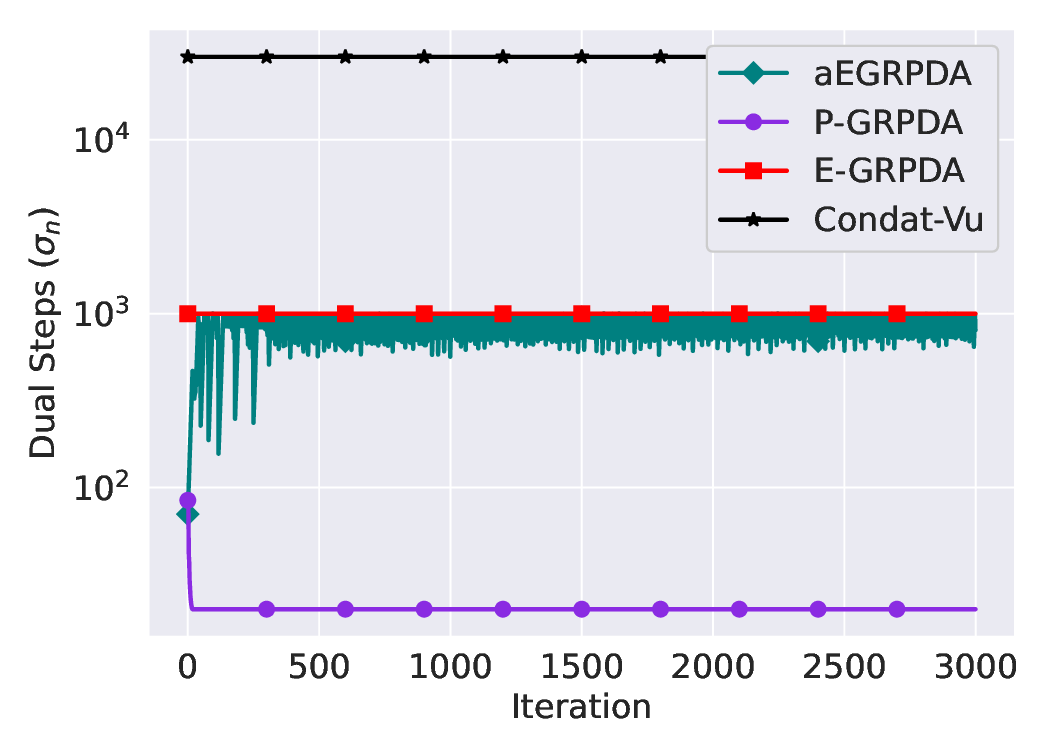}
    \subcaption{Dual stepsize ($\sigma_n$)}
  \end{subfigure}

  \vspace{1em}

  % Row 2: mushrooms
  \begin{subfigure}[b]{0.32\textwidth}
    \includegraphics[width=\textwidth]{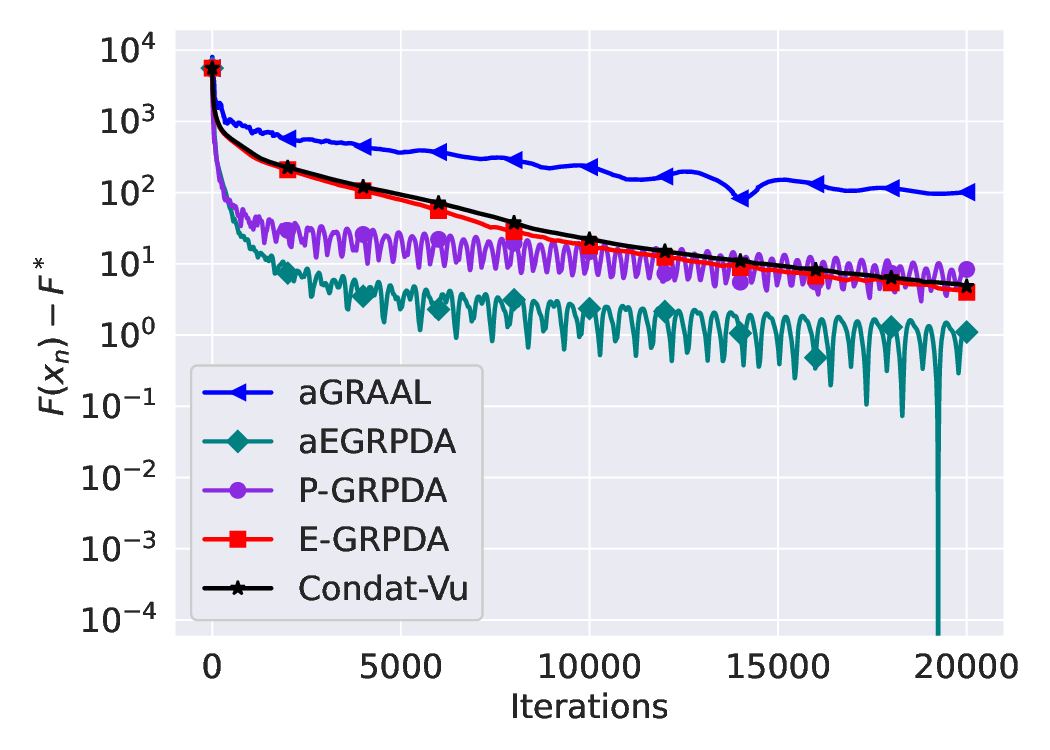}
    \subcaption{mushrooms}
  \end{subfigure}\hfill
  \begin{subfigure}[b]{0.32\textwidth}
    \includegraphics[width=\textwidth]{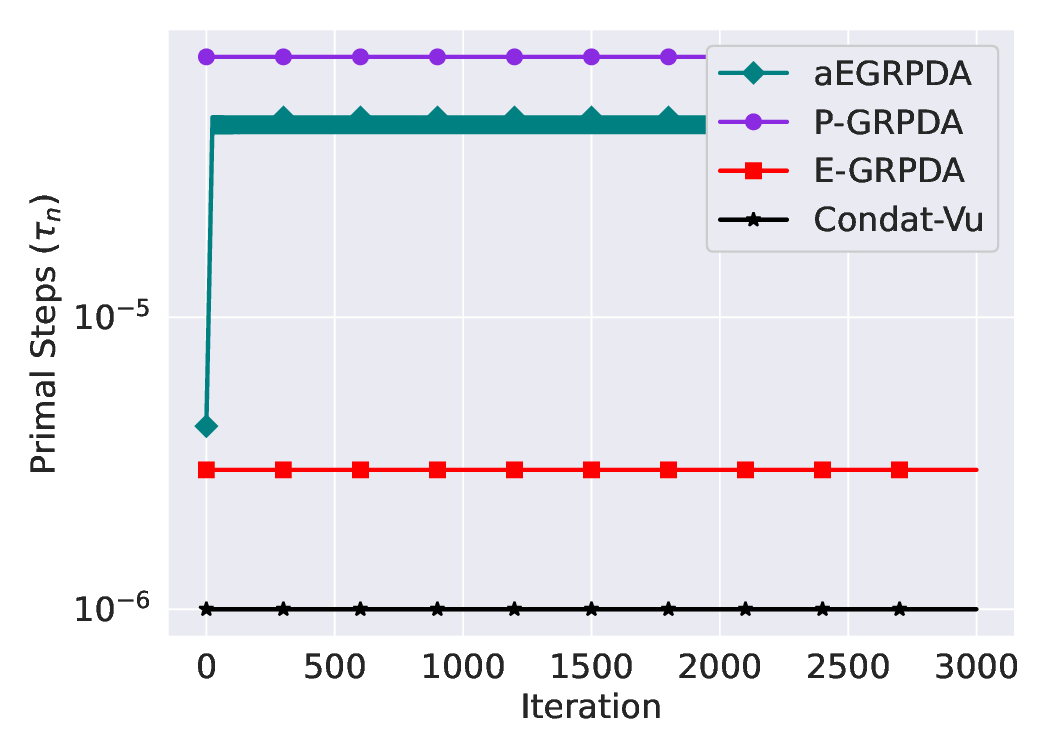}
    \subcaption{Primal stepsize ($\tau_n$)}
  \end{subfigure}\hfill
  \begin{subfigure}[b]{0.32\textwidth}
    \includegraphics[width=\textwidth]{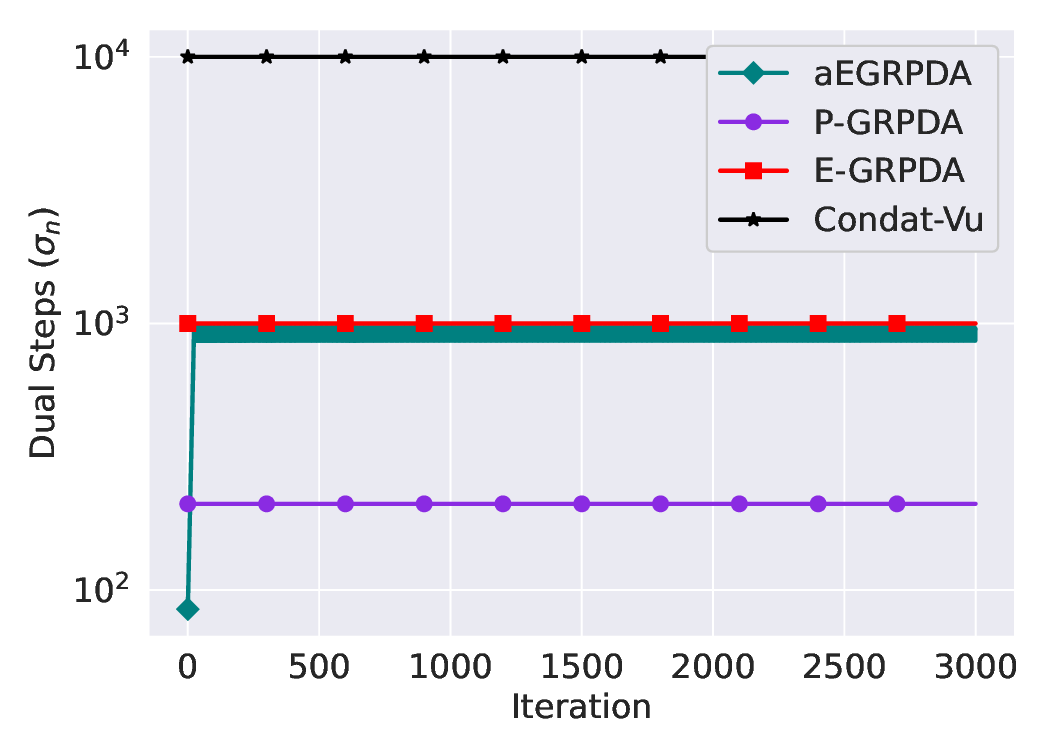}
    \subcaption{Dual stepsize ($\sigma_n$)}
  \end{subfigure}

  \vspace{1em}

  % Row 3: ijcnn1
  \begin{subfigure}[b]{0.32\textwidth}
    \includegraphics[width=\textwidth]{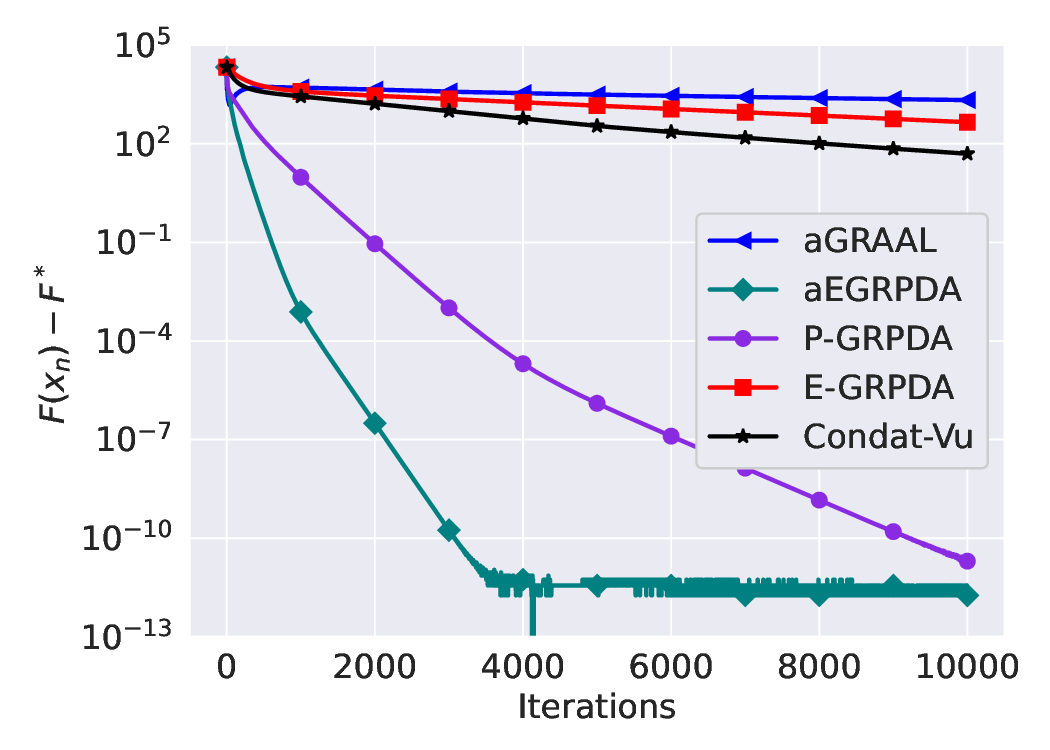}
    \subcaption{ijcnn1}
  \end{subfigure}\hfill
  \begin{subfigure}[b]{0.32\textwidth}
    \includegraphics[width=\textwidth]{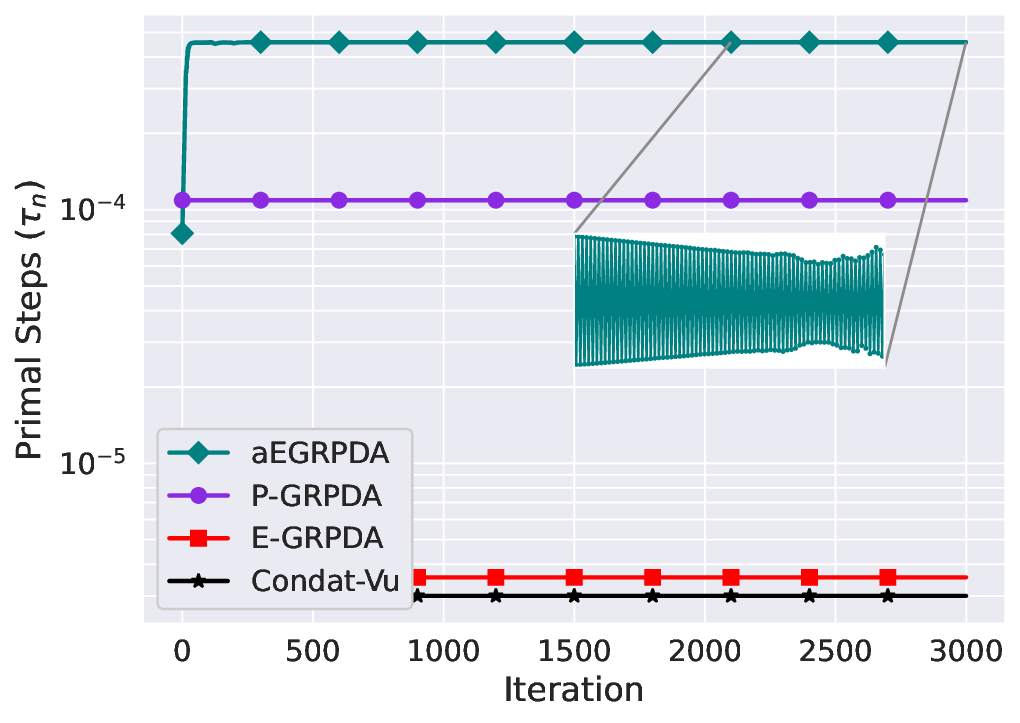}
    \subcaption{Primal stepsize ($\tau_n$)}
  \end{subfigure}\hfill
  \begin{subfigure}[b]{0.32\textwidth}
    \includegraphics[width=\textwidth]{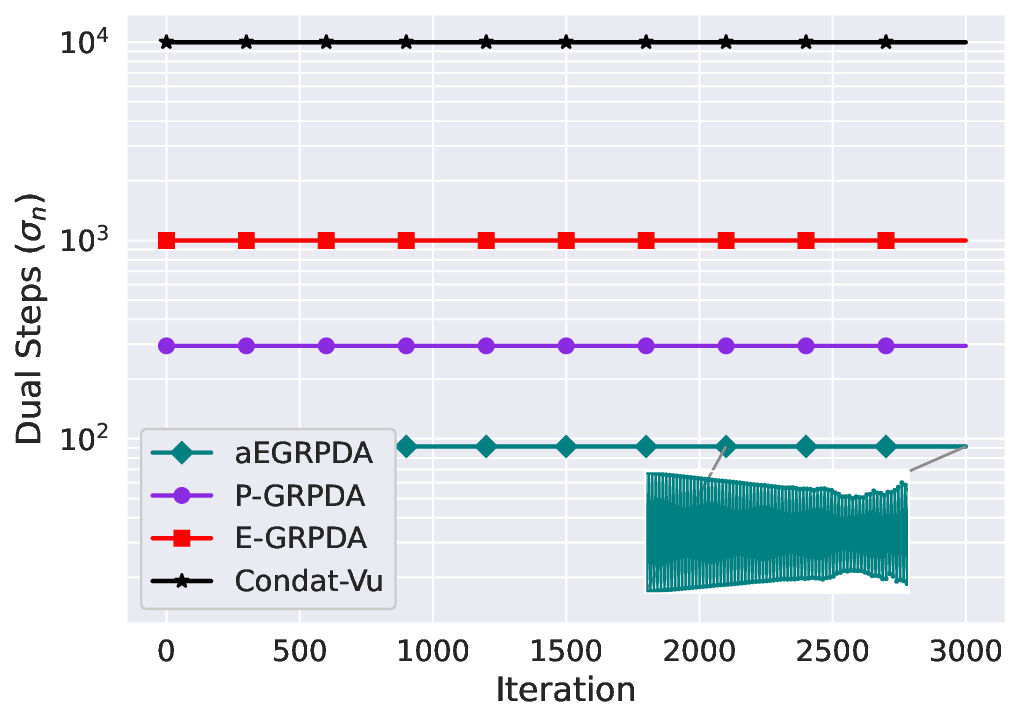}
    \subcaption{Dual stepsize ($\sigma_n$)}
  \end{subfigure}

  \caption{Convergence plots for logistic regression with \textbf{setting 2} on the three LIBSVM datasets \texttt{a9a}, \texttt{ijcnn1}, and \texttt{mushrooms}. In each row: (left) objective gap $F(x_n)-F^*$ vs iterations, (center) primal stepsize $\tau_n$, and (right) dual stepsize $\sigma_n$.}
  \label{fig:slr_comparison}
\end{figure}

\medskip
\textbf{Setting 1:} When $R(x) = \lambda\|x\|_1$, $\lambda>0$, \eqref{log_reg} fits our model \eqref{1.1} with $K=I$ (which reduces the computational cost of matrix–vector multiplication), $g(\cdot) = \lambda\|\cdot\|_1$, and $f,h$ chosen as above. Since both of our algorithms (Algorithm \ref{algorithm 1} and \ref{algorithm 2}) take advantage over aGRAAL when the linear operator $K$ in \eqref{1.1} is nonzero and non-identity by exploiting the structure. So, in this case, instead of applying aGRAAL in the primal-dual form, we apply aGRAAL directly to \eqref{log_reg}. 

\medskip
\textbf{Setting 2:} In the second scenario, we take $R(x) = \lambda_1\|x\|_1 + \lambda_2\|Dx\|_1$, where $D$ is the first-order difference matrix and $\lambda_1,\lambda_2>0$. This corresponds to~\eqref{1.1} with $f(\cdot)=\lambda_1\|\cdot\|_1$, $g(\cdot)=\lambda_2\|\cdot\|_1$, $K=D$, and $h$ as above.

\medskip
Numerical experiments are conducted on the standard LIBSVM datasets \texttt{a9a}, \texttt{ijcnn1}, and \texttt{mushrooms}.
Following \cite{vladarean2021first}, we set $\lambda = 0.005\|A^{\top} b\|_{\infty}$, where $A=[a_1^\top;\dots;a_m^\top]\in\mathbb{R}^{m\times n}$ and $b=[b_1,\ldots,b_m]\in\mathbb{R}^m$, and $\lambda_1=1, \lambda_2=150$.

All methods are initialized with $x_0=0$ and $y_0=x_0+\eta$, where $\eta\sim\mathcal{N}(0,10^{-9}I)$ is a small perturbation. Since there is no formal rule for selecting $(\tau,\sigma)$ for Condat--V\~u and E\mbox{-}GRPDA, we performed moderate parameter tuning and found that $\sigma\in[2,4]$ together with $\tau\in[2\times10^{-6},\,6.1\times10^{-4}]$ yields stable and fast convergence for \texttt{ijcnn1} dataset, $\sigma\in[10^2,10^4]$ together with $\tau\in[10^{-6},\,5\times10^{-5}]$ for \texttt{a9a} and \texttt{mushrooms}. Moreover, the computation of $\tau_n$ in P\mbox{-}GRPDA and aEGRPDA follows the local stepsize inequality of E\mbox{-}GRPDA; thus, it is sensible to choose the initial parameters (e.g., $\tau_0$) close to the $\tau$ used for E\mbox{-}GRPDA for good performance, while values such as $\tau_0=1$ or $\tau_0=10$ can be used as quick sanity checks. Since $\sigma_n=\beta\,\tau_n$, we sweep $\beta$ over $[8.1\times10^{2},\,10^{7}]$, a range motivated by the observed ratio $\sigma/\tau$ in Condat-V\~u and E\mbox{-}GRPDA. We therefore adopt the following parameter choices for the logistic regression problem in \textbf{Setting~1}:
\begin{itemize}
  \item \textbf{Condat-V\~u}: $\tau = 6.5 \times 10^{-5}$, $\sigma = 2$ for \texttt{ijcnn1}; $\tau = 3 \times 10^{-6}$, $\sigma = 10^4$ for \texttt{a9a}; $\tau = 10^{-6}$, $\sigma = 10^4$ for \texttt{mushrooms}  .
  \item \textbf{E-GRPDA}: $\tau = 6.321 \times 10^{-4}$, $\sigma = 2.78$ for \texttt{ijcnn1};  $\tau = 3.55 \times 10^{-6}$, $\sigma = 10^3$ for \texttt{a9a}; $\tau = 3\times 10^{-6}$, $\sigma = 10^3$ for \texttt{mushrooms}, and $\phi = \frac{1 + \sqrt{5}}{2}$.
  \item \textbf{P-GRPDA}: $\tau_0 = 9 \times 10^{-3}$, $\beta = 3.5\times 10^{6}$ for \texttt{ijcnn1}; $\beta = 3\times 10^6$ for \texttt{a9a}; $\beta = 1.93 \times 10^4$ for \texttt{mushrooms}, and $\psi = 1.76$, $\mu = 0.79$, $\mu' = 0.26$.
  \item \textbf{aEGRPDA}: $\tau_0 = 10^{-3}$, $\beta = 4.642 \times 10^{3}$ for \texttt{ijcnn1}, $\beta = 7\times 10^6$ for \texttt{a9a} and $\beta=2.7\times10^6$ for \texttt{mushrooms}.
\end{itemize}
In a similar manner, we select the following parameters for \textbf{Setting 2}.

\begin{itemize}
  \item \textbf{Condat-V\~u}: $\tau = 3\times 10^{-5}$, $\sigma = 10^4$ for \texttt{ijcnn1}; $\tau = 10^{-6}$, $\sigma = 3\times 10^4$ for \texttt{a9a}; $\tau = 10^{-6}$, $\sigma = 10^4$ for \texttt{mushrooms}  .
  \item \textbf{E-GRPDA}: $\tau = 3.55 \times 10^{-6}$, $\sigma = 10^3$ for \texttt{ijcnn1};  $\tau = 3\times 10^{-6}$, $\sigma = 10^3$ for \texttt{a9a} and \texttt{mushrooms}, and $\phi = \frac{1 + \sqrt{5}}{2}$.
  \item \textbf{P-GRPDA}: $\tau_0 = 3 \times 10^{-4}$, $\beta = 2.7\times 10^{6}$ for \texttt{ijcnn1} and \texttt{mushrooms}; $\beta = 2\times 10^6$ for \texttt{a9a}, and $\psi = 1.76$, $\mu = 0.79$, $\mu' = 0.26$.
  \item \textbf{aEGRPDA}: $\tau_0 = 10^{-3}$, $\beta = 2 \times 10^{5}$ for \texttt{ijcnn1}; $\beta = 2\times 10^7$ for \texttt{a9a} and \texttt{mushrooms}.
\end{itemize}

The numerical results for the two settings are reported in Figures~\ref{fig:slr_comparison} and~\ref{fig:slr_all}.  
From Figure~\ref{fig:slr_all}, we observe that when $K=I$, aGRAAL takes a clear advantage and consistently outperforms the other methods, in particular P\text{-}GRPDA (Algorithm~\ref{algorithm 1}) and aEGRPDA (Algorithm~\ref{algorithm 2}), in terms of the fastest decay of the objective gap $F(x_n)-F^\ast$. In contrast, Figure~\ref{fig:slr_comparison} indicates that aEGRPDA achieves the most rapid decrease of $F(x_n)-F^\ast$ throughout the run, with P\text{-}GRPDA typically ranking second and often closely tracking aEGRPDA during the initial iterations. Meanwhile, E\text{-}GRPDA and Condat--V\~u remain reliable but converge more slowly under the same conditions, reflecting their more conservative stepsize regimes.

%----------------------------------------------

%---------------------------------%-------------------------------
%-----------------GraphNEt-----------%----------------------------

\subsection{GraphNet Problem}
Consider the GraphNet \cite{Grosenick2013GraphNet} regularized least–squares problem
\begin{equation}\label{eq:graphnet}
    \min_{x\in\mathbb{R}^{n}}
        F(x)\coloneqq \lambda_{1}\,\|x\|_{1}+\frac{\lambda_{2}}{2}\,x^{\top} Wx~+\frac{1}{2m}\,\|Ax-b\|_2^{2}.
\end{equation}
where $A\in\mathbb{R}^{m\times n}$ is a design matrix, $W$ is a sparse graph, $b\in\mathbb{R}^m$, and $\lambda_{1},\lambda_{2}\!>\!0$ are regularization parameters. When $W=I_{n}$, \eqref{eq:graphnet} reduces to the elastic net \cite{zou2005regularization}, see \cite{Grosenick2013GraphNet}. Furthermore, the middle term $x^{T}Wx$ simplifies to $\|Dx\|^2_2$ with $W=D^{T}D$; the graph Laplacian, where $D\in\mathbb{R}^{|E|\times n}$ is the incidence matrix of a graph $G=(V,E)$. With this choice of $W$, comparing \eqref{eq:graphnet} with \eqref{1.1} yields $f(\cdot)=\lambda_1 \lVert \cdot \rVert_1$, $g(\cdot)=\frac{\lambda_2}{2}\lVert \cdot\rVert^2$, $K=D$, and $h(\cdot)=\frac{1}{2m}\|A(\cdot)-b\|^2_2$. 

We generate the following datasets for this experiment.
% --- recovery plot as before ---
\begin{figure}[htbp]
  \centering
  \begin{subfigure}[b]{0.31\textwidth}
    \includegraphics[width=\textwidth]{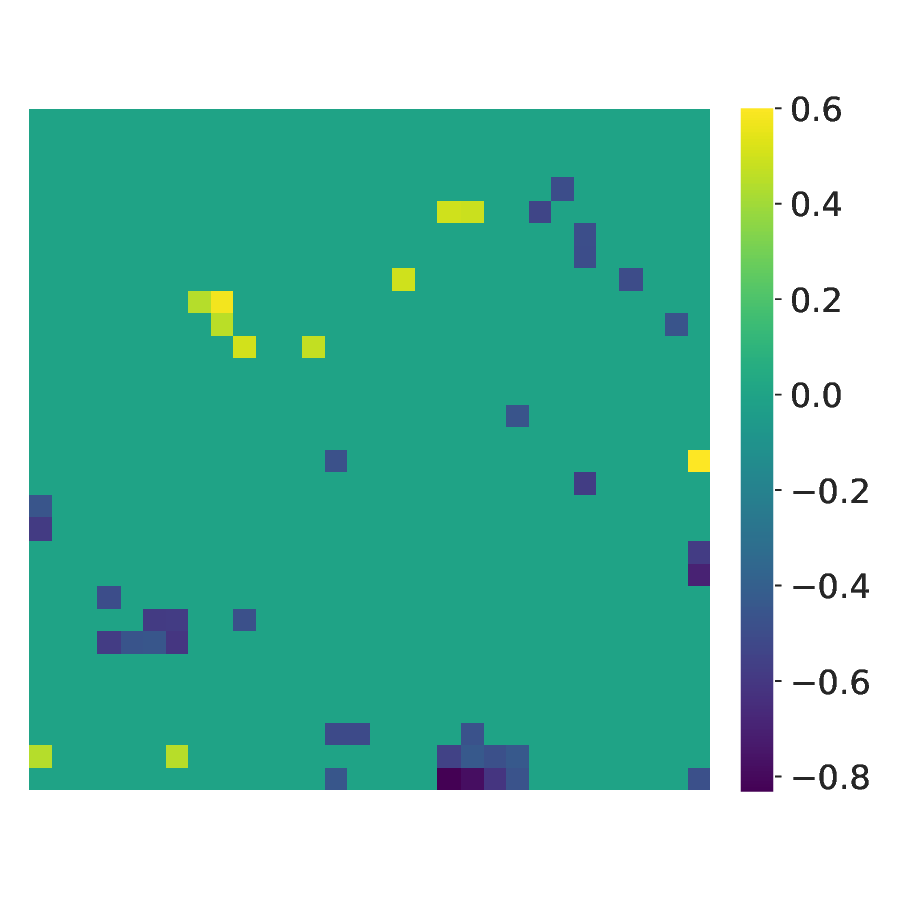}
    \caption{True GraphNet signal}
  \end{subfigure}
  \begin{subfigure}[b]{0.31\textwidth}
    \includegraphics[width=\textwidth]{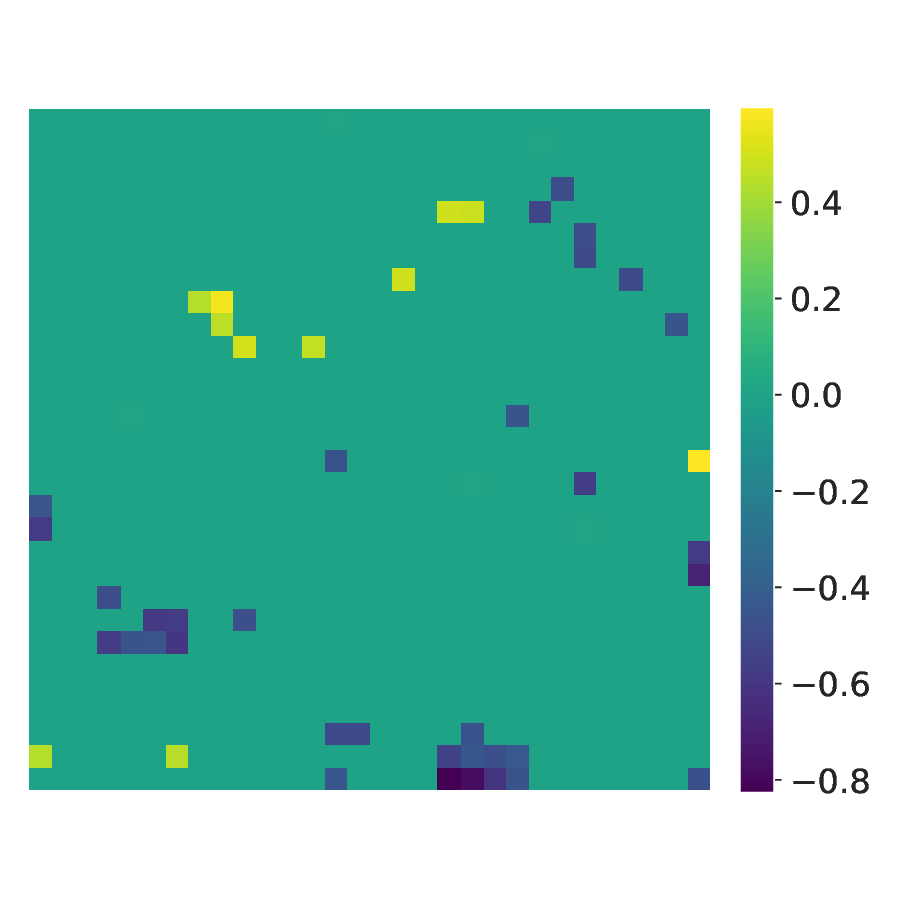}
    \caption{aEGRPDA}
  \end{subfigure}
  \begin{subfigure}[b]{0.31\textwidth}
    \includegraphics[width=\textwidth]{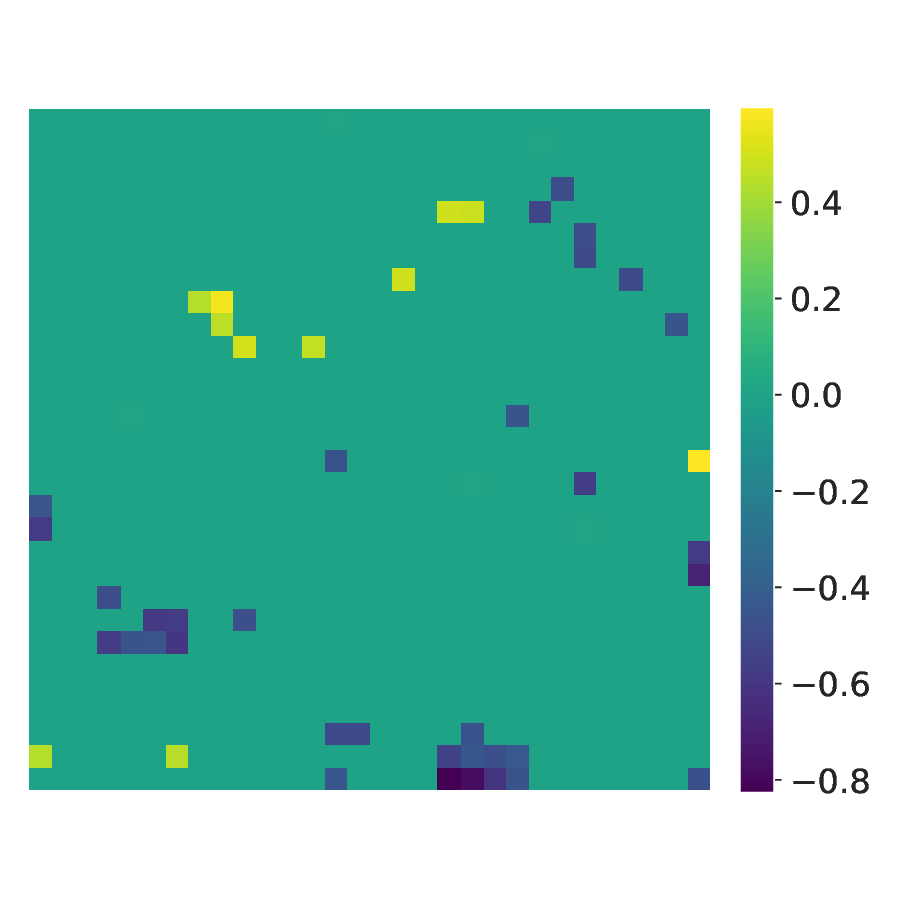}
    \caption{P‑GRPDA}
  \end{subfigure}

  \vspace{.1mm}

  \begin{subfigure}[b]{0.31\textwidth}
    \includegraphics[width=\textwidth]{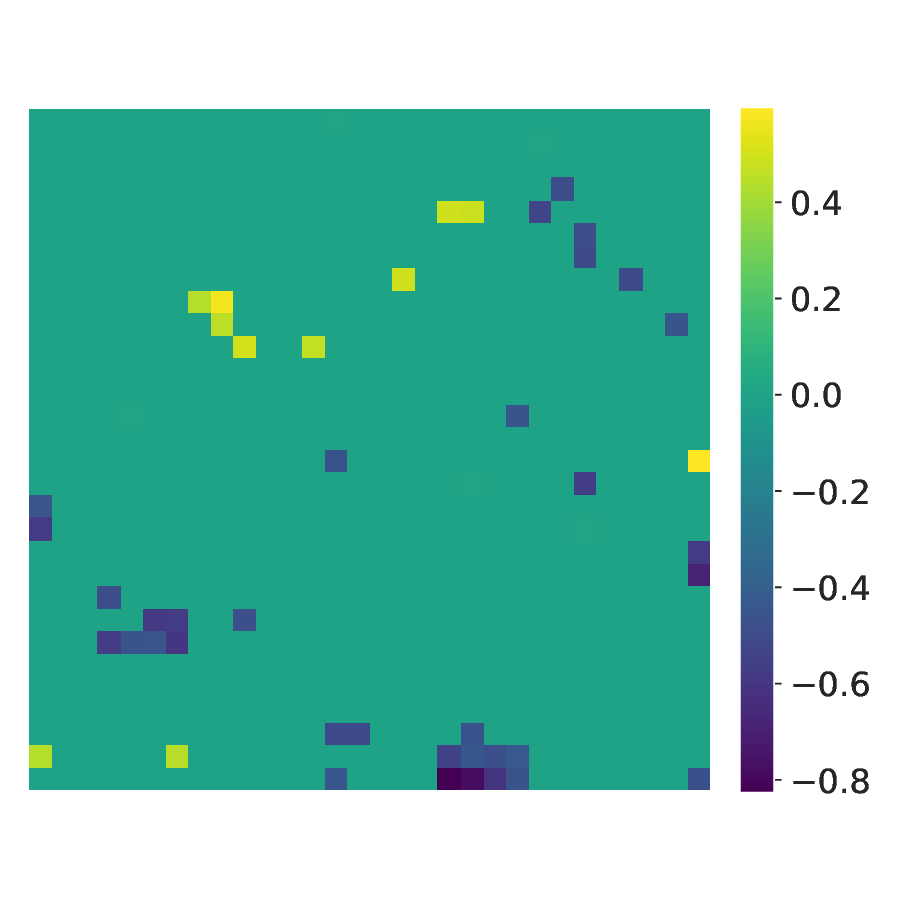}
    \caption{E‑GRPDA}
  \end{subfigure}
  \begin{subfigure}[b]{0.31\textwidth}
    \includegraphics[width=\textwidth]{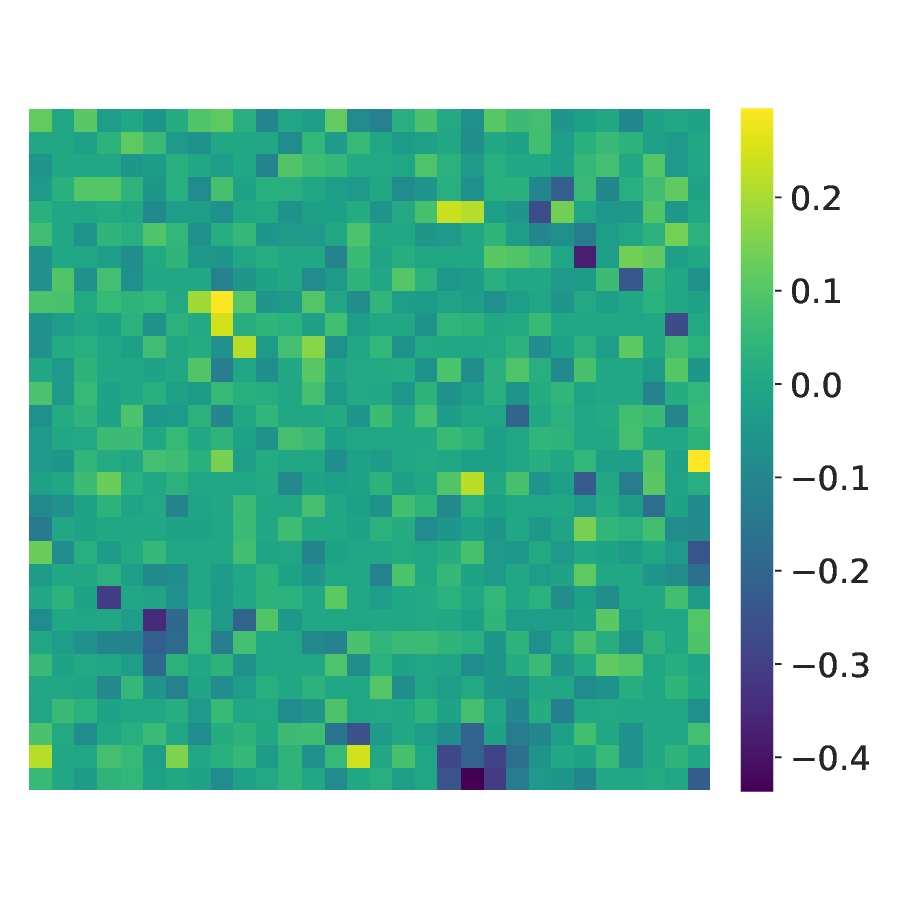}
    \caption{aGRAAL}
  \end{subfigure}
  \begin{subfigure}[b]{0.31\textwidth}
    % blank for symmetry
    \includegraphics[width=\textwidth]{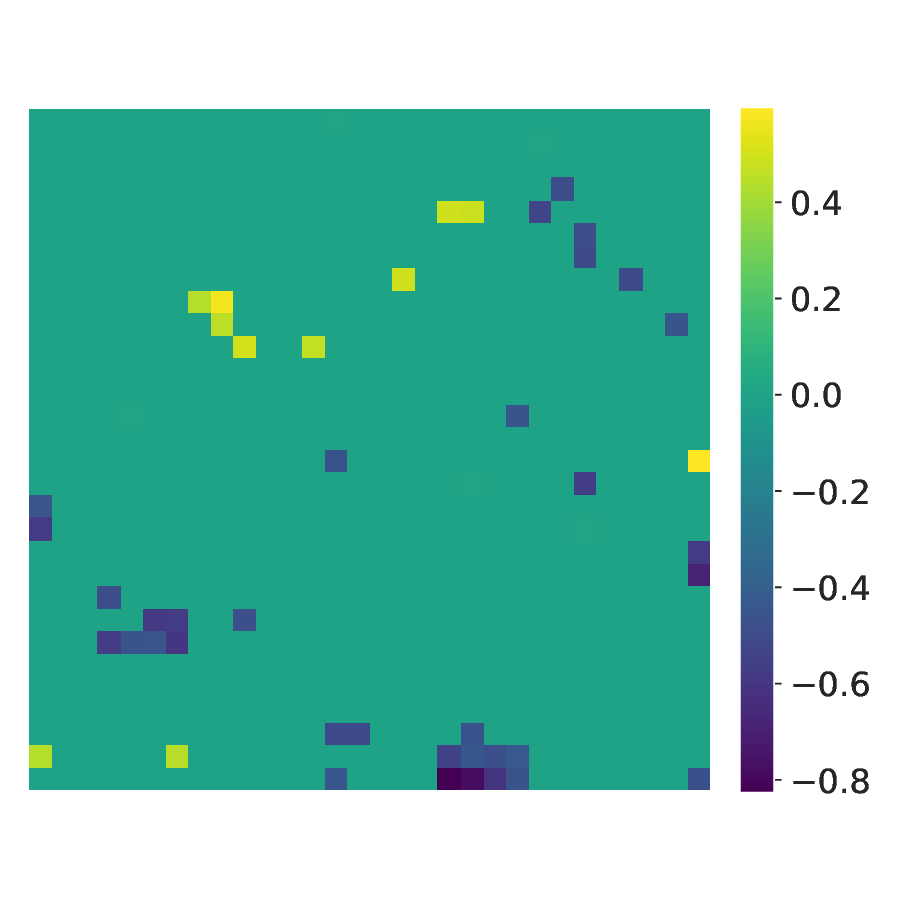}
    \caption{Condat--V\~{u} }
  \end{subfigure}

\caption{Recovered GraphNet signals on a $30 \times 30$ grid with $m = 300$. (a) True signal; (b) Recovered by aEGRPDA; (c) Recovered by P‑GRPDA; (d) Recovered by E‑GRPDA; (e) Recovered by aGRAAL; (f) Recovered by Condat--V\~{u} .}\label{fig:graphnet_recovery}
\end{figure}
%-------------------------------------------
Let $G=(V,E)$ be a graph on a two-dimensional grid of size $n_{v_1}\times n_{v_2}$ (so $|V|=n=n_{v_1}n_{v_2}$). We connect every horizontal and vertical neighbour pair with an edge and the incidence matrix $D\in\mathbb{R}^{|E|\times n}$ is then constructued for each pair $e=(i,j)$, by setting $(Dx)_{e}=x_i-x_j$, see \cite{shuman2013emerging} for an overview of graph constructions and operators.
We generate entries of $x_0\in\mathbb{R}^n$ from $\mathcal{N}(0,1)$, and obtain a smoothed signal $x_{\mathrm{smth}}$ by solving the sparse linear system
$$
  (I_n + \alpha W)\,x_{\mathrm{smth}} = x_0,~~\text{where}~~ W := D^\top D,~~ \alpha>0.
$$
Note that the operator $(I+\alpha W)^{-1}$ is called the Tikhonov smoothing filter \cite{li2020graph}, which eliminates the high graph‑frequencies and preserves the low ones. The approximate solution of the above sparse linear system (one can use $\alpha=2$ or $3$) can be obtained using the conjugate gradient method by taking a maximum of $1000$ iterations. After obtaining $x_{\mathrm{smth}}$, we construct the ground truth (smooth + sparse) $x^\star$
by keeping only the largest $5\%$ of its entries and setting the rest to zero. Specifically, one can take $k=\lfloor 0.05\,n\rfloor$ and let
$$
  c := \left|x_{\mathrm{smth}}\right|_{(k)},
$$
which denotes the $k$-th order statistic of the set $\{|(x_{\mathrm{smth}})_i|\}_{i=1}^n$ that is the $k$-th largest absolute value, see \cite{donoho2006compressed,david2004order, tibshirani2005sparsity} for more details.
Then, for $i=1,2,\ldots,n$, we construct the true signal $x^\star$ as
\begin{equation*}
  x^\star_i =
  \begin{cases}
    (x_{\mathrm{smth}})_i, & \text{if } |(x_{\mathrm{smth}})_i|\ge c,\\[0.2em]
    0,          & \text{otherwise}.
  \end{cases}
\end{equation*}
%---------------------%---------------------------
\begin{figure}[htbp]
  \centering
  % First row: 30×30 grid (m=300)
  \begin{subfigure}[b]{0.40\textwidth}
    \includegraphics[width=\textwidth]{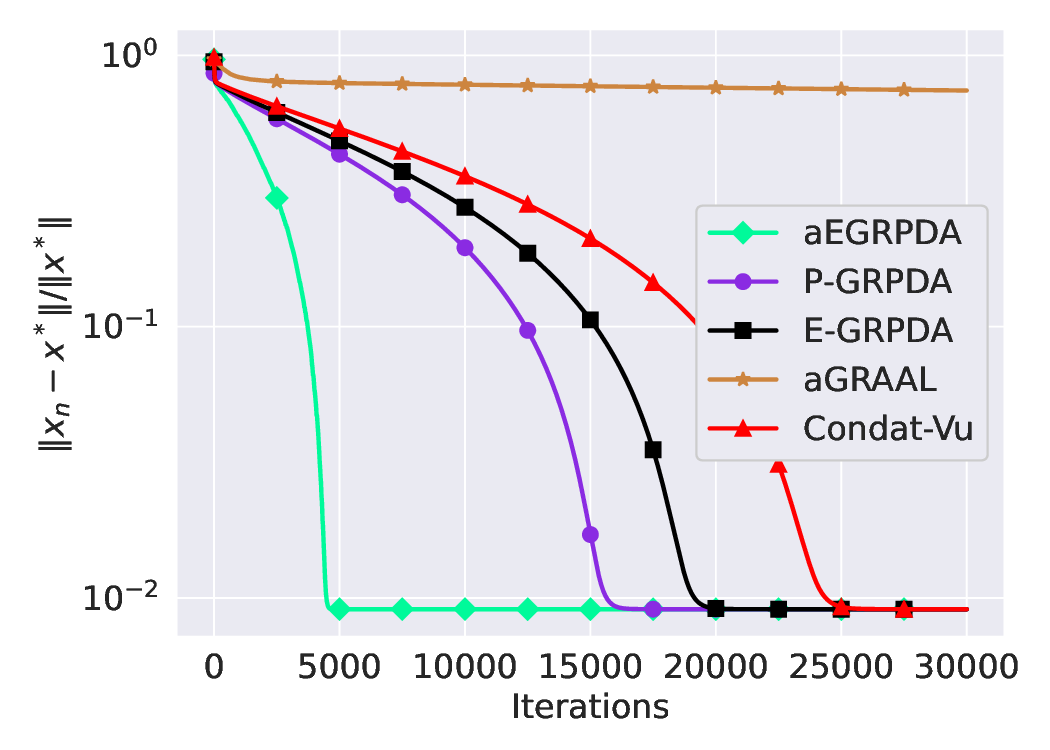}
    \caption{Relative error ($m=300$)}
    \label{fig:graphnet_err_300}
  \end{subfigure}\hfill
  \begin{subfigure}[b]{0.40\textwidth}  
    \includegraphics[width=\textwidth]{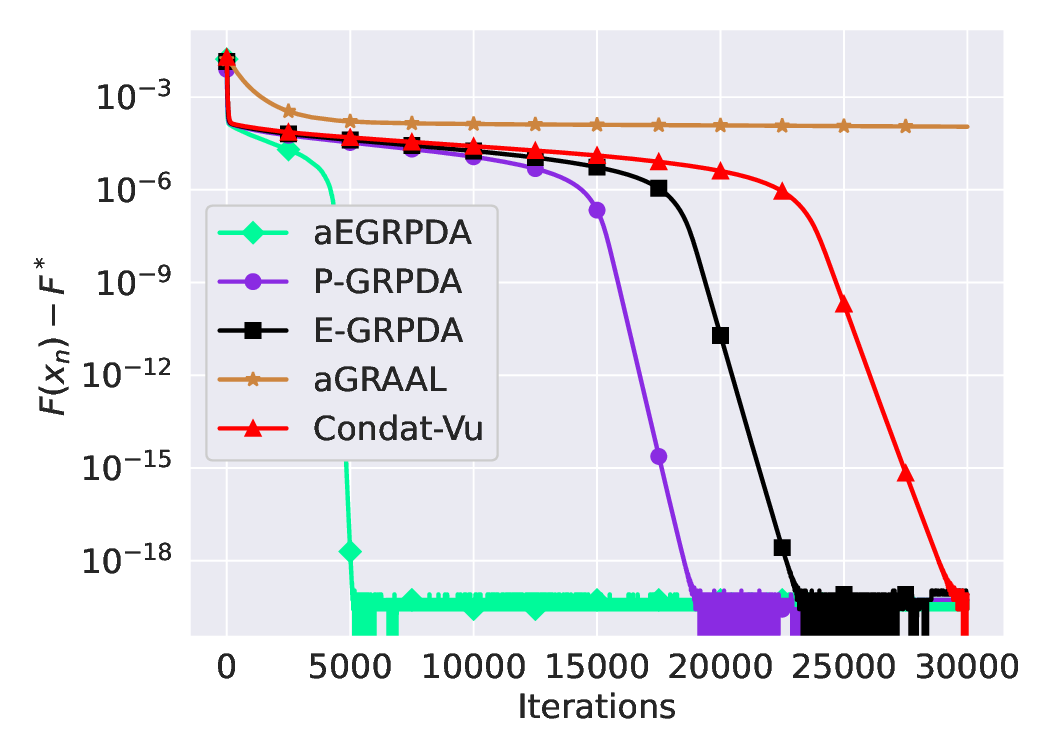}
    \caption{Convergence plot ($m=300$)}
    \label{fig:graphnet_gap_300}
  \end{subfigure}

  \vspace{.1em}

  % Second row: 40×40 grid (m=500)
  \begin{subfigure}[b]{0.40\textwidth}
    \includegraphics[width=\textwidth]{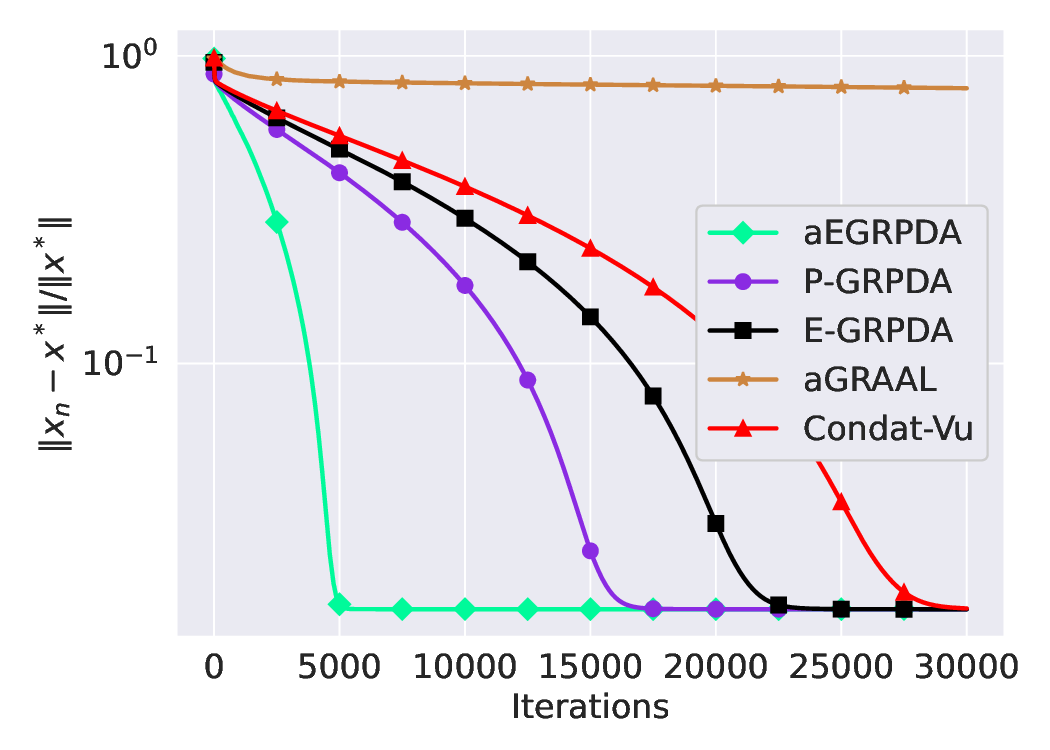}
    \caption{Relative error ($m=500$)}
    \label{fig:graphnet_err_500}
  \end{subfigure}\hfill
  \begin{subfigure}[b]{0.40\textwidth}
    \includegraphics[width=\textwidth]{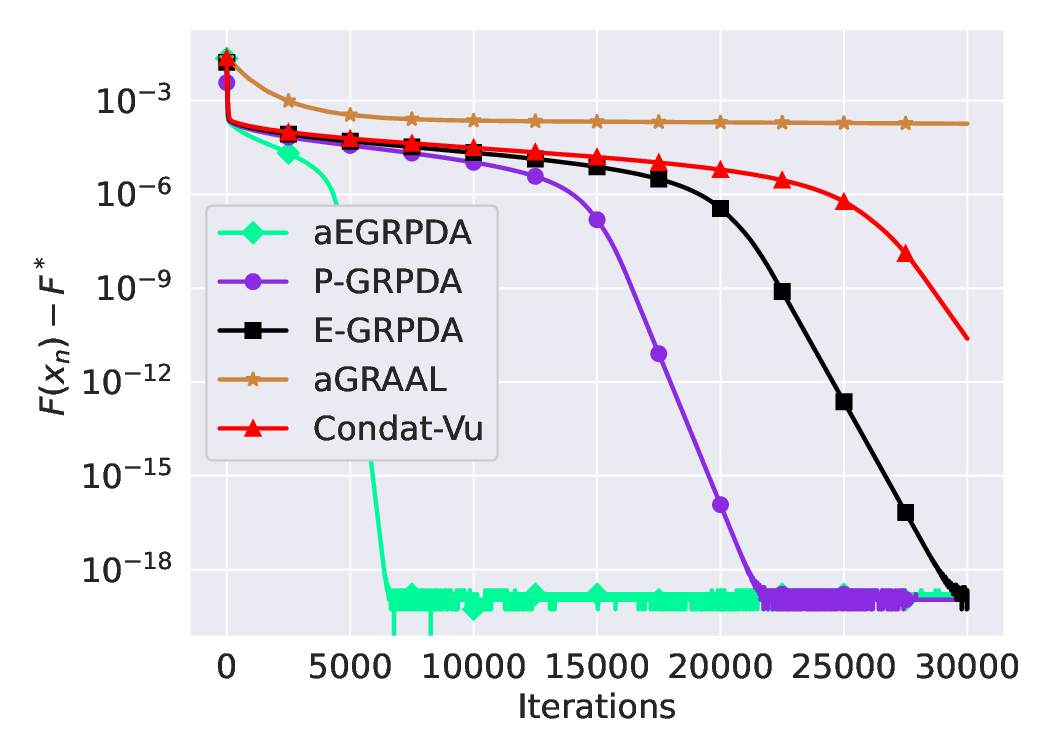}
    \caption{Convergence plot ($m=500$)}
    \label{fig:graphnet_gap_500}
  \end{subfigure}

  \caption{Convergence plot of the GraphNet by different algorithms: (a)--(b) on $30\times30$ grid (with $m=300$);
    (c)--(d) on $40\times40$ grid (with $m=500$). Here (a) and (c) represent the relative error per iteration; (b) and (d) represent the objective‐value gap per iteration.}
  \label{fig:graphnet_convergence}
\end{figure}
%---------------------------%----------------
We generate the enrtries of $A\in\mathbb{R}^{m\times n}$ randomly from $\mathcal{N(}0,\frac{1}{m})$ and set $b= Ax^{\star}+\epsilon$, where $\epsilon$ is the Gaussian noise drawn from $\mathcal{N}(0,0.01)$. We set the regularization parameters as $\lambda_{1} = 6.64\times 10^{-6}$, $\lambda_{2} = 10^{-6}$, and initialized the variables with $x_0 = y_0 = 0$. A moderate parameter tuning was performed to select the initial parameters for each algorithm.
We ran short trajectories on a coarse grid, then narrowed to a local neighborhood and selected values that achieved stable progress and the fastest early-iteration decay of the objective gap. The effective ranges of the parameters were:
Condat--V\~u with $\tau\!\in[5,8]$, $\sigma\!\in[1.5\times10^{-3},\,3\times10^{-2}]$;
E-GRPDA (with $\phi=\tfrac{1+\sqrt{5}}{2}$) with $\tau\!\in[20,30]$, $\sigma\!\in[1.5\times10^{-3},\,3\times10^{-2}]$;
P-GRPDA (with $\phi=\frac{1 + \sqrt{5}}{2}$), $\beta\!\in\left[7.5\times 10^{-5},\ 10^{-3}\right]$; and aEGRPDA with $\beta\!\in \left[3\times 10^{-7},\ 3.75\times 10^{-3}\right]$.
As stated earlier, we took $\tau_0$ close to the $\tau$ used in E-GRPDA, or slightly higher than the $\tau$ used in Condat-V\~u, to provide a good initial kick. We then settled on the following values:
\begin{itemize}
  \item \textbf{Condat--V\~{u} :} $\tau = 7.720572$, $\sigma = 1.9458604 \times 10^{-2}$.
  \item \textbf{E-GRPDA:} $\tau = 25.735240623$, $\sigma = 1.9458604 \times 10^{-2}$, and $\phi = \frac{1 + \sqrt{5}}{2}$.
  \item \textbf{P-GRPDA:} $\tau_0=10$, $\beta = 10^{-4}$, $\psi = \frac{1 + \sqrt{5}}{2}$, $\mu = 0.81$, $\mu' = 0.40$.
  \item \textbf{aEGRPDA:} $\tau_0=10$, $\beta = 10^{-6}$.
\end{itemize}
Figure~\ref{fig:graphnet_recovery} collects the recovered signal $x^*$ obtained by various algorithms, while Figure~\ref{fig:graphnet_convergence} shows the corresponding convergence behavior. We observe that all the algorithms nearly recover $x^*$; however, aGRAAL fails to accurately reconstruct the nonzero elements of $x^*$. This is consistent with the convergence plots in Figure~\ref{fig:graphnet_convergence}, which indicate that aGRAAL requires significantly more iterations to achieve competitive performance.

%---------------------------------

%-----------------------------------------%----------------------------
%-----------------------Fused LASSO-------------------%---------------------

\subsection{Fused LASSO}
The Fused Lasso problem is written as
\begin{equation}
\min_{x\in\mathbb{R}^n}F(x) := \lambda_1 \lVert x \rVert_1 + \lambda_2\|Dx\|_1 + \frac{1}{2} \lVert Ax - b \rVert^2,
\end{equation} 
where the difference matrix  $D\in\mathbb{R}^{(n-1) \times n}$ is given by  
$$K=D=  
\begin{bmatrix}
-1 & 1 & 0 & \cdots & 0 \\
0 & -1 & 1 & \cdots & 0 \\
\vdots & \vdots & \vdots & \ddots & \vdots \\
0 & 0 & 0 & \cdots & -1 & 1
\end{bmatrix}.$$

\begin{figure}[htbp]
  \centering
  % Column 1: (m,n) = (500,1000)
  \begin{subfigure}[b]{0.32\textwidth}
    \includegraphics[width=\textwidth]{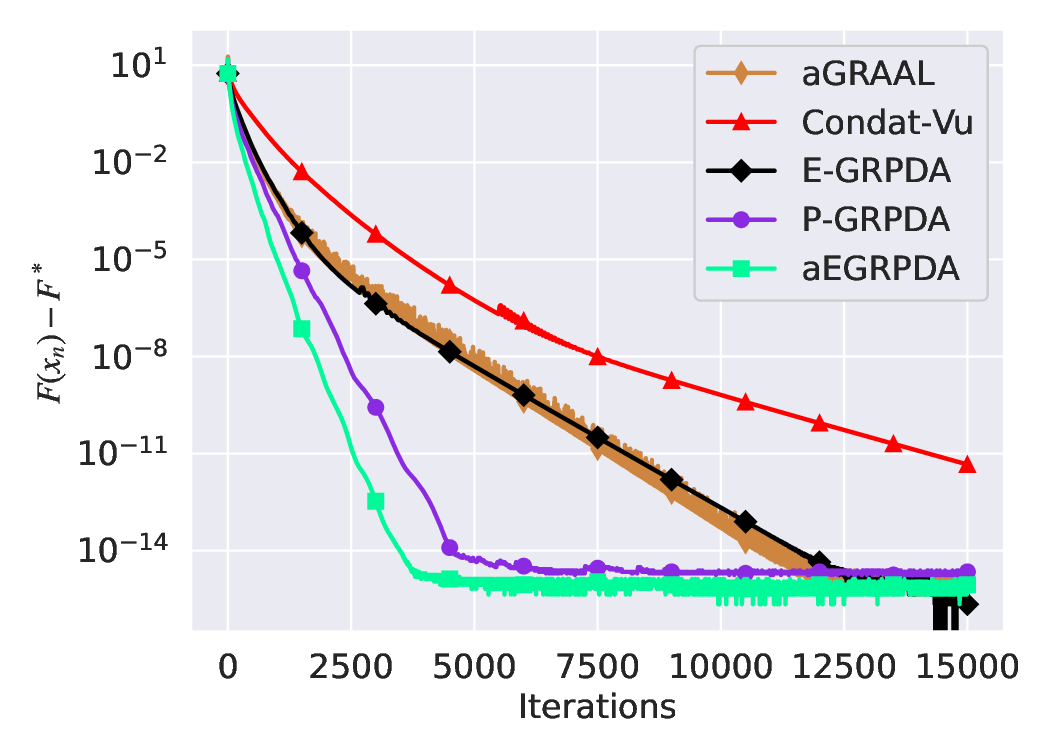}
    \subcaption{Convergence plot}
    \label{fig:fl500x1000_conv}
  \end{subfigure}\hfill
  \begin{subfigure}[b]{0.32\textwidth}
    \includegraphics[width=\textwidth]{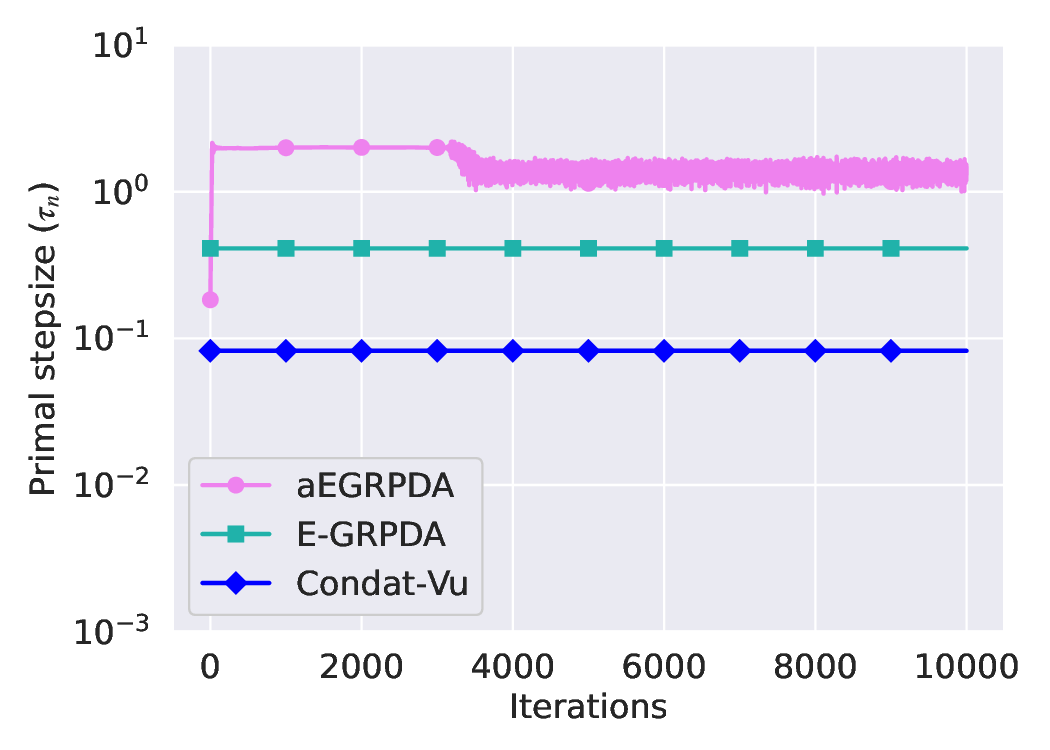}
    \subcaption{Primal steps ($\tau_n$)}
    \label{fig:fl500x1000_primal}
  \end{subfigure}\hfill
  \begin{subfigure}[b]{0.32\textwidth}
    \includegraphics[width=\textwidth]{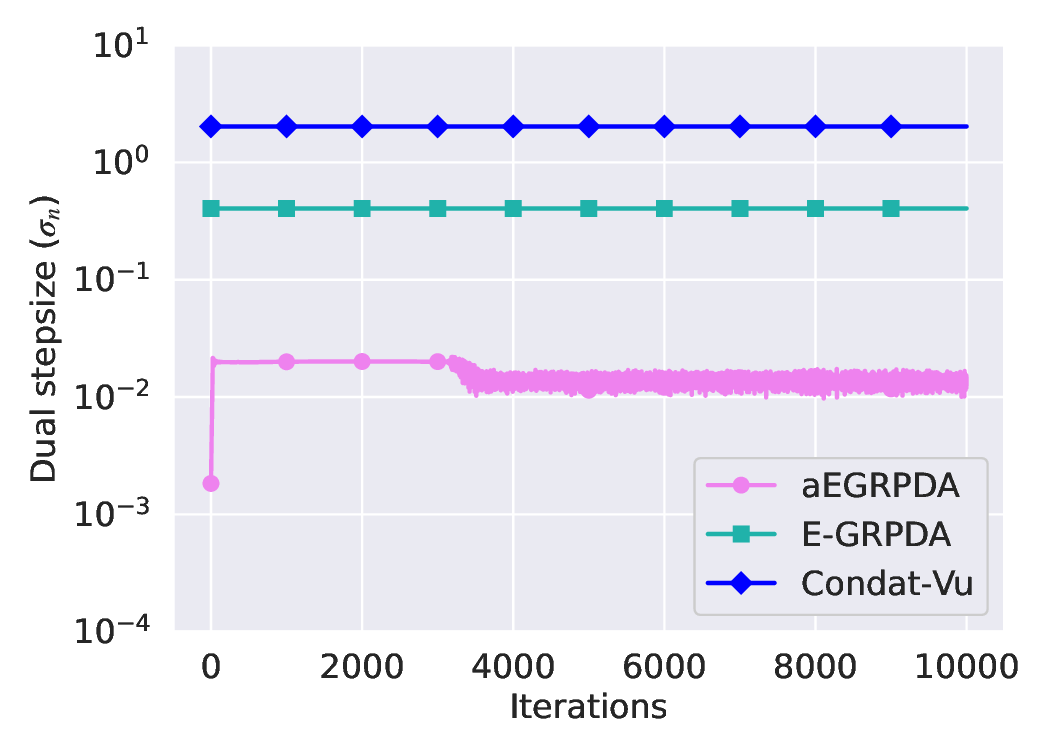}
    \subcaption{Dual steps ($\sigma_n$)}
    \label{fig:fl500x1000_dual}
  \end{subfigure}

  \vspace{.3em}

  % Column 2: (m,n) = (1000,2000)
  \begin{subfigure}[b]{0.32\textwidth}
    \includegraphics[width=\textwidth]{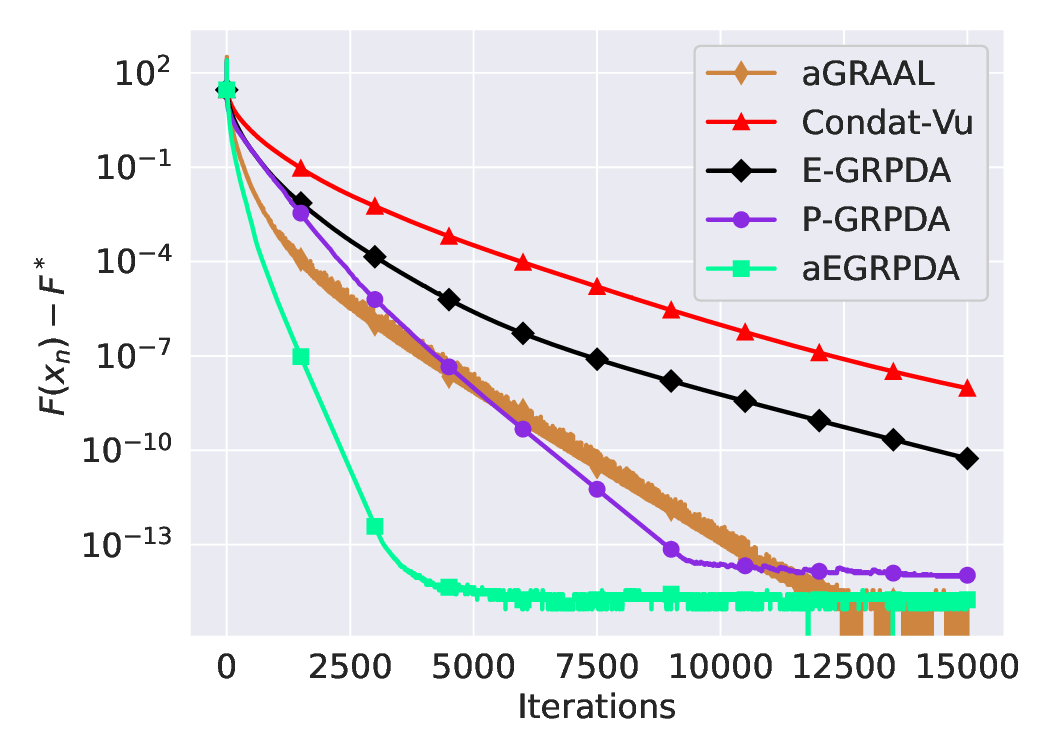}
    \subcaption{Convergence plot}
    \label{fig:fl1000x2000_conv}
  \end{subfigure}\hfill
  \begin{subfigure}[b]{0.32\textwidth}
    \includegraphics[width=\textwidth]{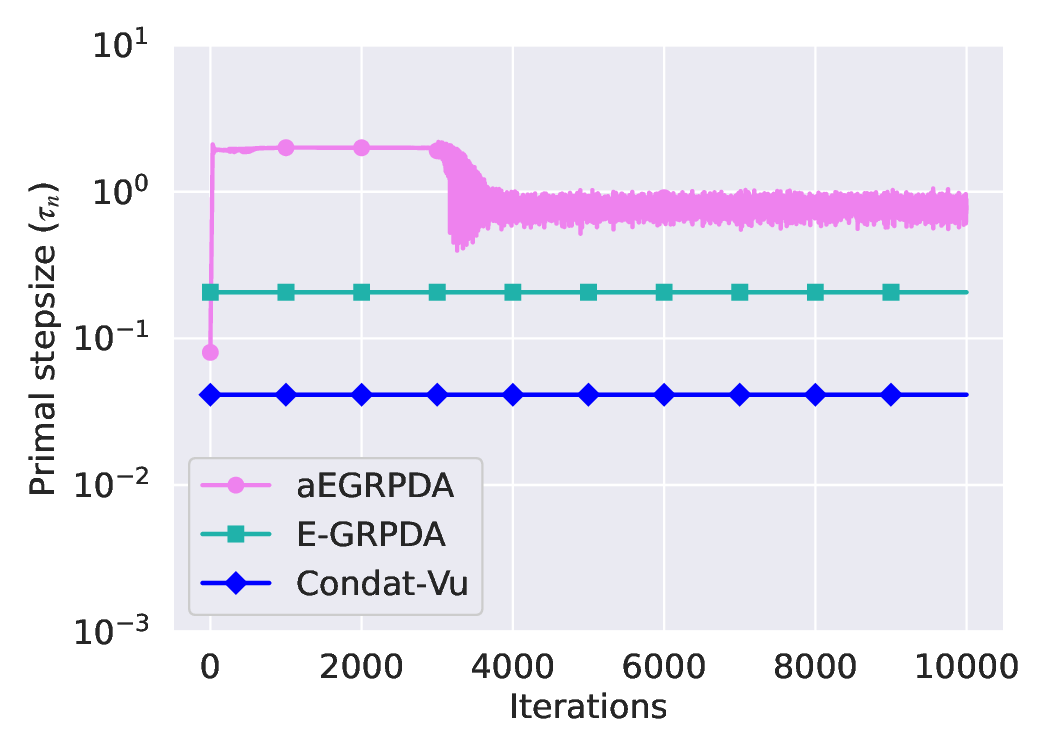}
    \subcaption{Primal steps ($\tau_n$)}
    \label{fig:fl1000x2000_primal}
  \end{subfigure}\hfill
  \begin{subfigure}[b]{0.32\textwidth}
    \includegraphics[width=\textwidth]{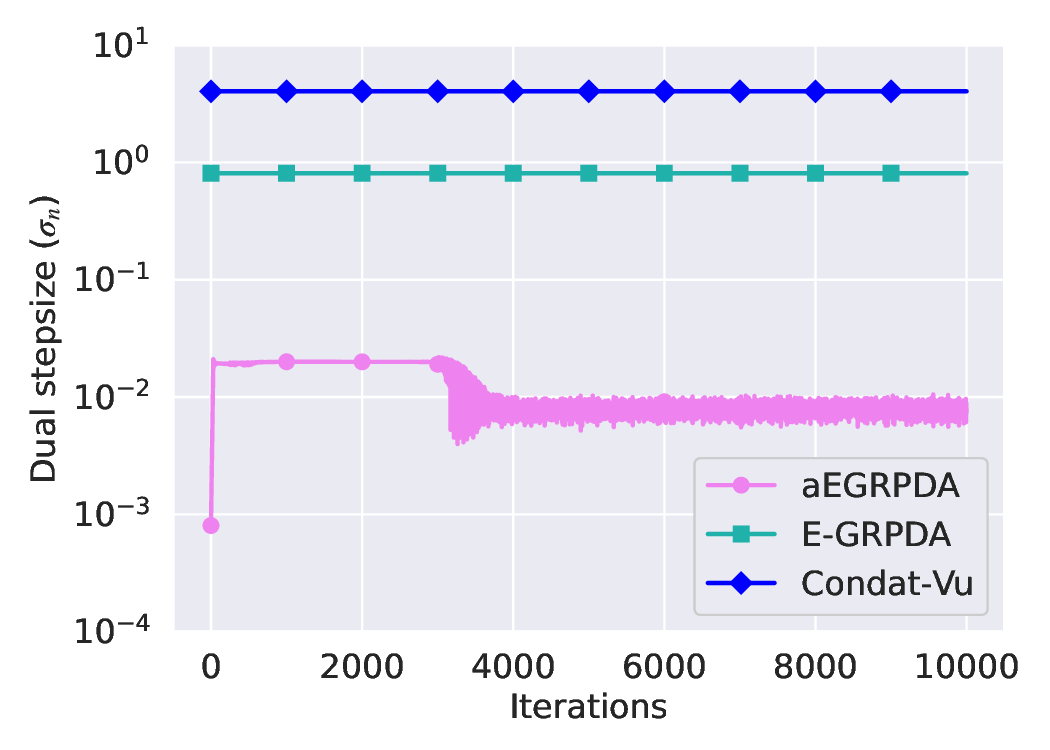}
    \subcaption{Dual steps ($\sigma_n$)}
    \label{fig:fl1000x2000_dual}
  \end{subfigure}

\caption{Convergence results for the Fused Lasso problem with two different problem sizes: $(m,n) = (500, 1000)$ (top row) and $(m,n) = (1000, 2000)$ (bottom row). Each row shows (from left to right) the convergence plot and the comparisons of primal stepsizes $(\tau_n)$ and dual stepsizes $(\sigma_n)$.}
\label{fig:fused_lasso_all}
\end{figure}

\noindent
In this experiment, the elements of $A = (A_{ij})$ and the elements of the additional noise vector $\omega\in\mathbb{R}^m$ are sampled from a normal distribution  $\mathcal{N}(0,0.01)$. The vector $b$ is then constructed as $b=Ax^*+\omega$, where the entries of $x^*$ are independently generated from $\mathcal{N}(0,1)$. We set $\lambda_1 = 0.001$ and $\lambda_2= 0.03$. For the data-fitting term $h(x) = \frac{1}{2}\|Ax-b\|^2$, its gradient is $\nabla h(x) = A^*(Ax-b)$. Therefore, the Lipschitz constant $\bar L$ is nothing but $\|A^*A\|$, which corresponds to the largest eigenvalue of $A^*A$. After a parameter sweep, we are satisfied with the following initial values.
\begin{itemize}
    \item \textbf{Condat--V\~u}: $\tau = 2.353 \times 10^{-2}$,  $\sigma = 7.100 \times 10^{-2}$.
    \item \textbf{E-GRPDA}: $\tau = 1.177 \times 10^{-1}$, $\sigma = 1.420 \times 10^{-2}$, $\psi = \frac{1 + \sqrt{5}}{2}$.
    \item \textbf{P-GRPDA}: $\tau_0 = 10$, $\beta = 10^{-2}$, $\psi = 1.70$, $\mu = 0.79$, $\mu' = 0.266$.
    \item \textbf{aEGRPDA}: $\tau_0=10$, $\beta = 10^{-2}$.
\end{itemize}
%----------------------------
Here, we start with $\tau_0=10$, which may also be set to other values (e.g., the $\tau$ used in E\mbox{-}GRPDA). For $(\tau,\sigma)$, we sweep $\tau\in[10^{-3},\,2\times10^{-1}]$ and $\sigma\in[10^{-3},\,10^{-1}]$, followed by a local refinement around the best candidates. The parameter $\beta$ is then selected via a grid search over $[2\times10^{-3},\,10^{-1}]$. In Figure~\ref{fig:fused_lasso_all}, we observe that the primal–dual stepsizes of aEGRPDA remain essentially flat up to about ($\approx$ 3800 iterations), after which they oscillate within a narrow band. In terms of objective gap, aEGRPDA converges fastest and consistently outperforms the other methods, with P\text{-}GRPDA the next best.
%---------------------------------------------------------
%-------------------------Image Inpainting----------------%----------------------
\subsection{Image Inpainting}
Consider the TV--$L_2$ image inpainting problem \cite{chambolle2011first}
\begin{equation}\label{eq:inpaint}
    \min_{x\in\mathbb{R}^{m\times n}} F(x)\coloneqq\frac12\|M\odot(x-b)\|_2^2+\lambda\|\nabla x\|_{2,1},
\end{equation}
where $b\in\mathbb{R}^{m\times n}$ is the observed (generally noisy or damaged) image, $M\in\{0,1\}^{m\times n}$ is a binary mask (that is 1 on known pixels and 0 on missing), $\lambda>0$, and $\odot$ denotes the \emph{Hadamard product}. Comparing \eqref{eq:inpaint} with \eqref{1.1} gives $f=0$, $g = \lambda\|\cdot\|_{2,1}$, and $h(x)= \frac12\|M\odot(x-b)\|_2^2$. We use isotropic TV \cite{condat2017discrete, RudinOsherFatemi1992} with the \emph{discrete gradient operator} $K=\nabla:\mathbb{R}^{m\times n}\to\mathbb{R}^{m\times n\times 2}$ given by 
\begin{equation*}
  \|Kx\|_{2,1}=\|\nabla x\|_{2,1}:=\sum_{i,j}\sqrt{(\nabla_x x_{i,j})^2+(\nabla_y x_{i,j})^2},
\end{equation*}
with adjoint $K^\top=\nabla^\top=-\mathrm{div}$, and it is known that $\|K\|\le\sqrt{8}$, see \cite{chambolle2011first}. For $g(u)= \lambda\|u\|_{2,1}=\lambda\sum_{i,j}\|u_{i,j}\|_2$, we have $g^*(p)=\delta_{\{\|p_{i,j}\|_2\le\lambda\}}$, and hence the proximal conjugate of $g^*$ is the pixelwise Euclidean projection
\begin{equation*}
  \operatorname{prox}_{\sigma g^*}(q)=\Pi_{\{\|\,\cdot\,\|_2\le\lambda\}}(q)
  =\frac{q}{\max\{1,\ \|q\|_2/\lambda\}},
\end{equation*}
where $\Pi_C$ denotes the projection onto $C$ and $\|\cdot\|_2$ denotes the $\ell_2$ norm.
%-------------------------%--------------------%----
\begin{figure}[htbp]
  \centering
  % -------- Column 1: Condat–Vu--------
  \begin{minipage}{0.20\textwidth}
    \centering \textbf{Condat--V\~{u} }\par\medskip
    \includegraphics[width=\linewidth]{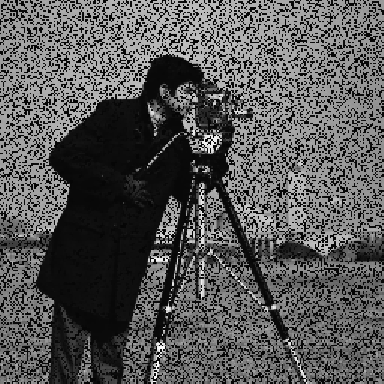}\\[-0.2ex]
    {\footnotesize $\lambda=0.0001$}\par\medskip
    \includegraphics[width=\linewidth]{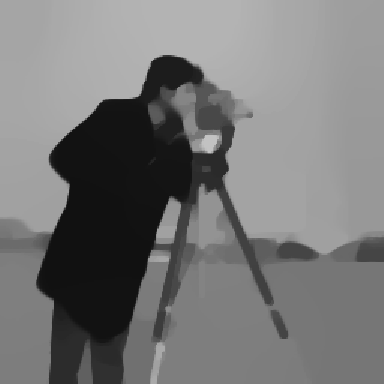}\\[-0.2ex]
    {\footnotesize $\lambda=0.03$}\par\medskip
    \includegraphics[width=\linewidth]{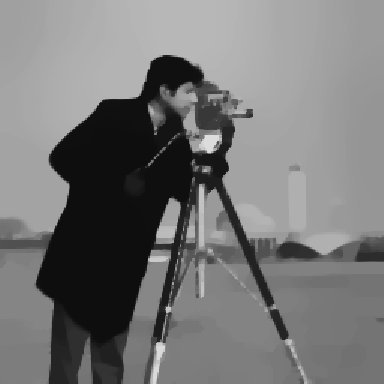}\\[-0.2ex]
    {\footnotesize $\lambda=0.1$}\par\medskip
    \includegraphics[width=\linewidth]{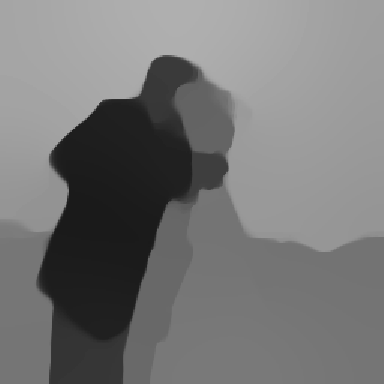}\\[-0.2ex]
    {\footnotesize $\lambda=1$}
  \end{minipage}
  \hfill
  % -------- Column 2: E–GRPDA (fixed steps) --------
  \begin{minipage}{0.20\textwidth}
    \centering \textbf{E-GRPDA}\par\medskip
    \includegraphics[width=\linewidth]{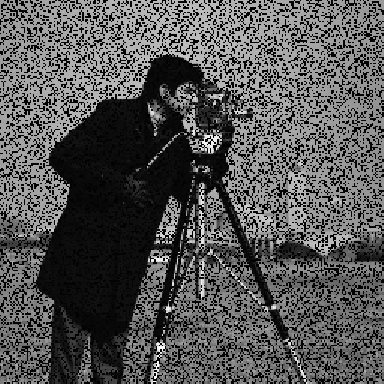}\\[-0.2ex]
    {\footnotesize $\lambda=0.0001$}\par\medskip
    \includegraphics[width=\linewidth]{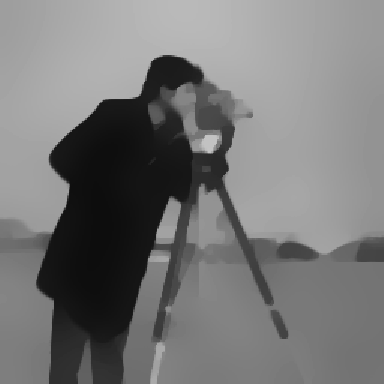}\\[-0.2ex]
    {\footnotesize $\lambda=0.03$}\par\medskip
    \includegraphics[width=\linewidth]{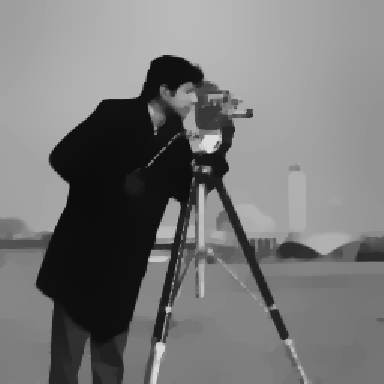}\\[-0.2ex]
    {\footnotesize $\lambda=0.1$}\par\medskip
    \includegraphics[width=\linewidth]{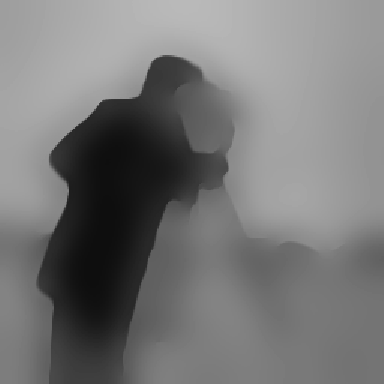}\\[-0.2ex]
    {\footnotesize $\lambda=1$}
  \end{minipage}
  \hfill
  % -------- Column 3: P–GRPDA --------
  \begin{minipage}{0.20\textwidth}
    \centering \textbf{P-GRPDA}\par\medskip
    \includegraphics[width=\linewidth]{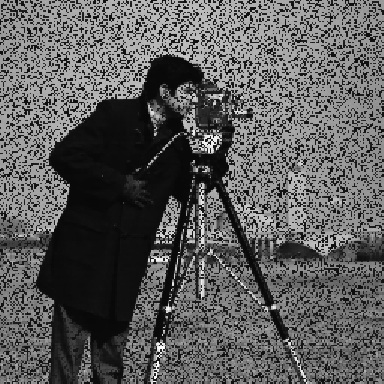}\\[-0.2ex]
    {\footnotesize $\lambda=0.0001$}\par\medskip
    \includegraphics[width=\linewidth]{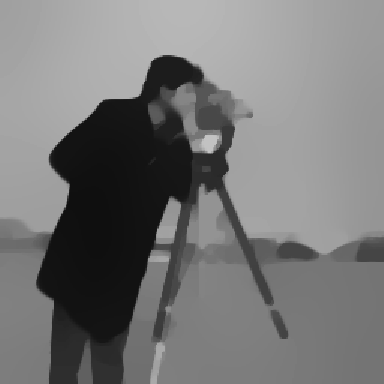}\\[-0.2ex]
    {\footnotesize $\lambda=0.03$}\par\medskip
    \includegraphics[width=\linewidth]{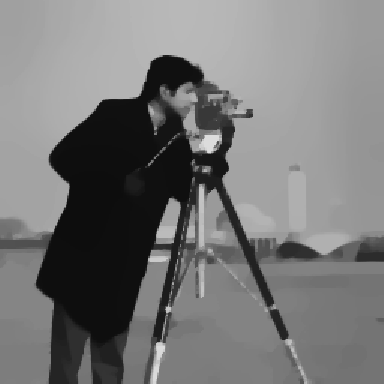}\\[-0.2ex]
    {\footnotesize $\lambda=0.1$}\par\medskip
    \includegraphics[width=\linewidth]{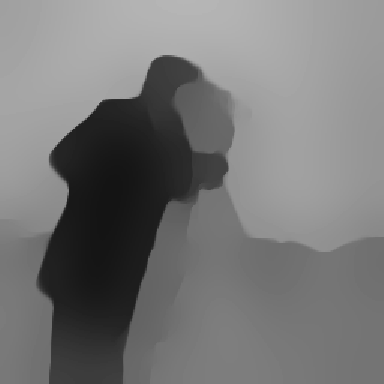}\\[-0.2ex]
    {\footnotesize $\lambda=1$}
  \end{minipage}
  \hfill
  % -------- Column 4: aEGRPDA --------
  \begin{minipage}{0.20\textwidth}
    \centering \textbf{aEGRPDA}\par\medskip
    \includegraphics[width=\linewidth]{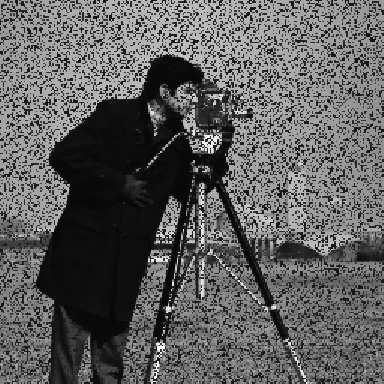}\\[-0.2ex]
    {\footnotesize $\lambda=0.0001$}\par\medskip
    \includegraphics[width=\linewidth]{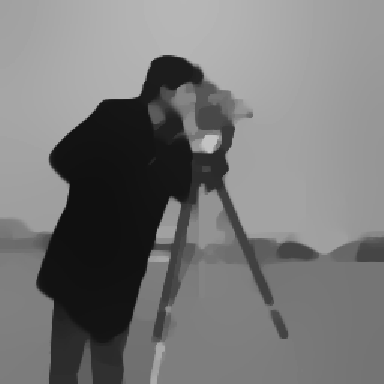}\\[-0.2ex]
    {\footnotesize $\lambda=0.03$}\par\medskip
    \includegraphics[width=\linewidth]{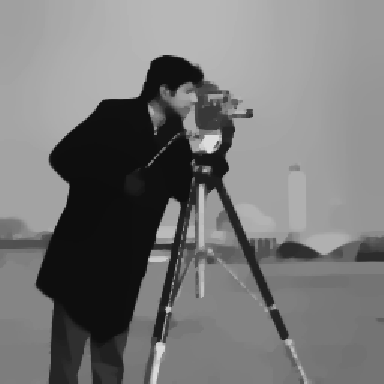}\\[-0.2ex]
    {\footnotesize $\lambda=0.1$}\par\medskip
    \includegraphics[width=\linewidth]{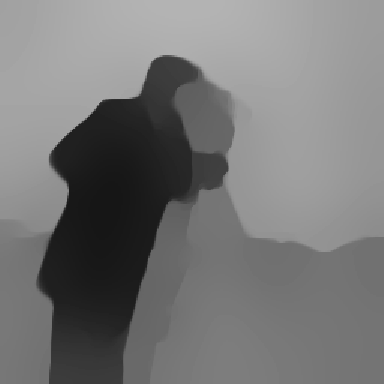}\\[-0.2ex]
    {\footnotesize $\lambda=1$}
  \end{minipage}

\caption{Effect of $\lambda$ on the TV--$L_2$ problem. Each column corresponds to a specific algorithm (Condat--V\~{u}, E-GRPDA, P-GRPDA, and aEGRPDA), while each row shows the reconstructed image obtained with a different value of $\lambda \in \{0.0001,~ 0.03,~ 0.1,~ 1\}$.}\label{fig:lambda-columns}
\end{figure}
%-------------------------------%-------------------------------
 We scaled the grayscale test image to $[0,1]$ and then removed $30\%$ of the pixels uniformly at random to form the mask $M$ and set $b=M\odot x_{\mathrm{true}}$. We applied the same moderate tuning protocol as in the other experiments: runs on a coarse grid followed by refinement around settings that produced stable progress and the fastest early-iteration decay of the objective gap. We fixed $\lambda=10^{-2}$ and used the following parameters:
\begin{itemize}
  \item \textbf{Condat--V\~{u}:} $\tau = 4 \times 10^{-2}$, \ $\sigma = 10^{-1}$.
  \item \textbf{E\mbox{-}GRPDA:} $\tau = 4 \times 10^{-2}$, \ $\sigma = 10^{-2}$, \ $\phi = \tfrac{1 + \sqrt{5}}{2}$.
  \item \textbf{P\mbox{-}GRPDA:} $\tau_0=1$, \ $\beta = 0.1$, \ $\psi = 1.70$, \ $\mu = 0.79$, \ $\mu' = 0.26$.
  \item \textbf{aEGRPDA:} $\tau_0=1$, \ $\beta = 0.1$.
\end{itemize}
%------------------------------------------
\begin{figure*}[htbp]
  \centering
  \begin{subfigure}{0.23\textwidth}
    \includegraphics[width=\linewidth]{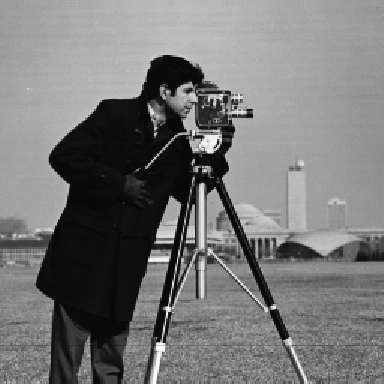}
    \caption{True image}
    \label{fig:recon-gt}
  \end{subfigure}
  \hfill
  \begin{subfigure}{0.23\textwidth}
    \includegraphics[width=\linewidth]{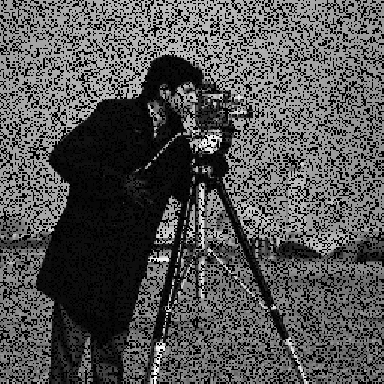}
    \caption{Damaged}
    \label{fig:recon-obs}
  \end{subfigure}
  \hfill
  \begin{subfigure}{0.23\textwidth}
    \includegraphics[width=\linewidth]{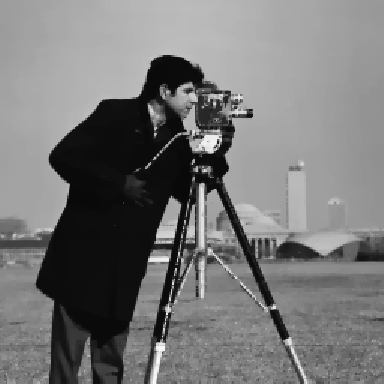}
    \caption{aEGRPDA}
    \label{fig:recon-full}
  \end{subfigure}

  \vspace{0.6em}

  \begin{subfigure}{0.23\textwidth}
    \includegraphics[width=\linewidth]{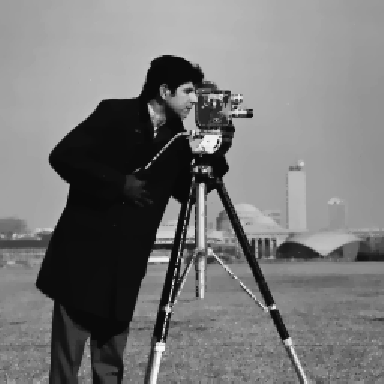}
    \caption{P-GRPDA}
    \label{fig:recon-part}
  \end{subfigure}
    \hfill
  \begin{subfigure}{0.23\textwidth}
    \includegraphics[width=\linewidth]{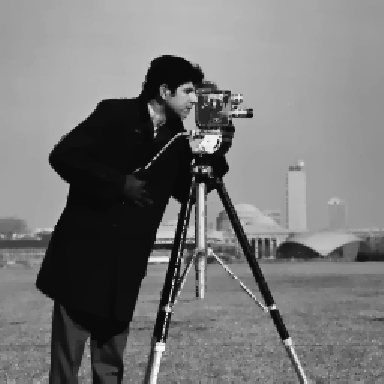}
    \caption{E-GRPDA}
    \label{fig:recon-fixed}
  \end{subfigure}
    \hfill
  \begin{subfigure}{0.23\textwidth}
    \includegraphics[width=\linewidth]{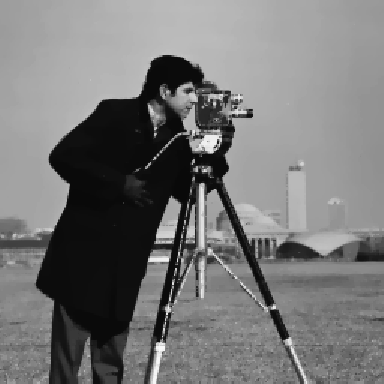}
    \caption{Condat--V\~{u} }
    \label{fig:recon-cv}
  \end{subfigure}

\caption{Reconstruction results for the TV--$L_2$ inpainting problem. (a) True image; (b) damaged (30\% pixels are removed); (c)--(f) reconstructed images obtained using aEGRPDA, P-GRPDA, E-GRPDA, and Condat--V\~{u}, respectively. The original image was downloaded from \url{https://webpages.tuni.fi/foi/GCF-BM3D/}.}\label{fig:Tv-L_2 recovery}
\end{figure*}
To track the recovery of the image, we compute the PSNR as
\begin{equation*}
\mathrm{PSNR}(x,x_{\mathrm{true}})
~=~
10\log_{10}\!\left(\frac{1}{\mathrm{MSE}(x,x_{\mathrm{true}})}\right),
\end{equation*}
where the \emph{mean squared error} ($\mathrm{MSE}$) is given by $\mathrm{MSE}(x,x_{\mathrm{true}})=\frac{1}{mn}\|x-x_{\mathrm{true}}\|_2^2$. First, we collect the effect of varying $\lambda$ on the solutions obtained by different algorithms, as presented in Figure~\ref{fig:lambda-columns}. Then, in Figure~\ref{fig:Tv-L_2 recovery}, we present the true image, the damaged image, and the corresponding reconstructions by various algorithms. Finally, Figure~\ref{fig:TV-L_2 convergence plots} reports measures for image recovery such as PSNR and convergence behavior, demonstrating that aEGRPDA achieves faster recovery with fewer iterations, followed by P-GRPDA and other algorithms.

%---------------------------
\begin{figure}[htbp]
  \centering
  \begin{subfigure}{0.4\linewidth}
    \includegraphics[width=\linewidth]{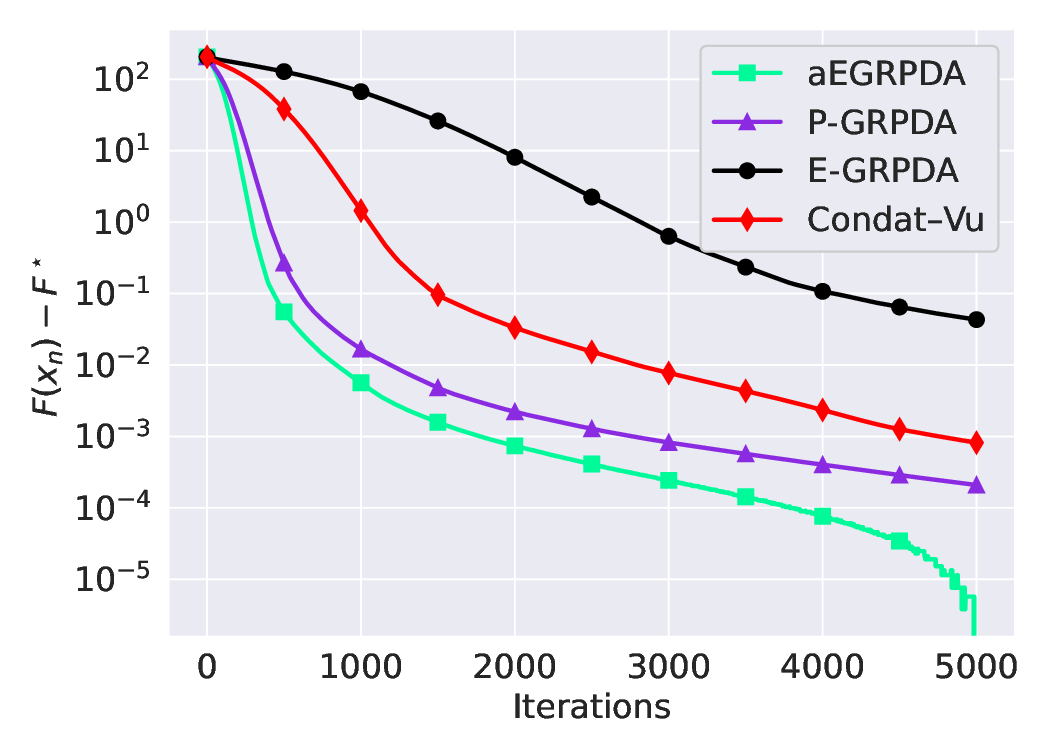}
    \caption{Objective gap vs. iterations}
    \label{fig:energy}
  \end{subfigure}\hfill
  \begin{subfigure}{0.4\linewidth}
    \includegraphics[width=\linewidth]{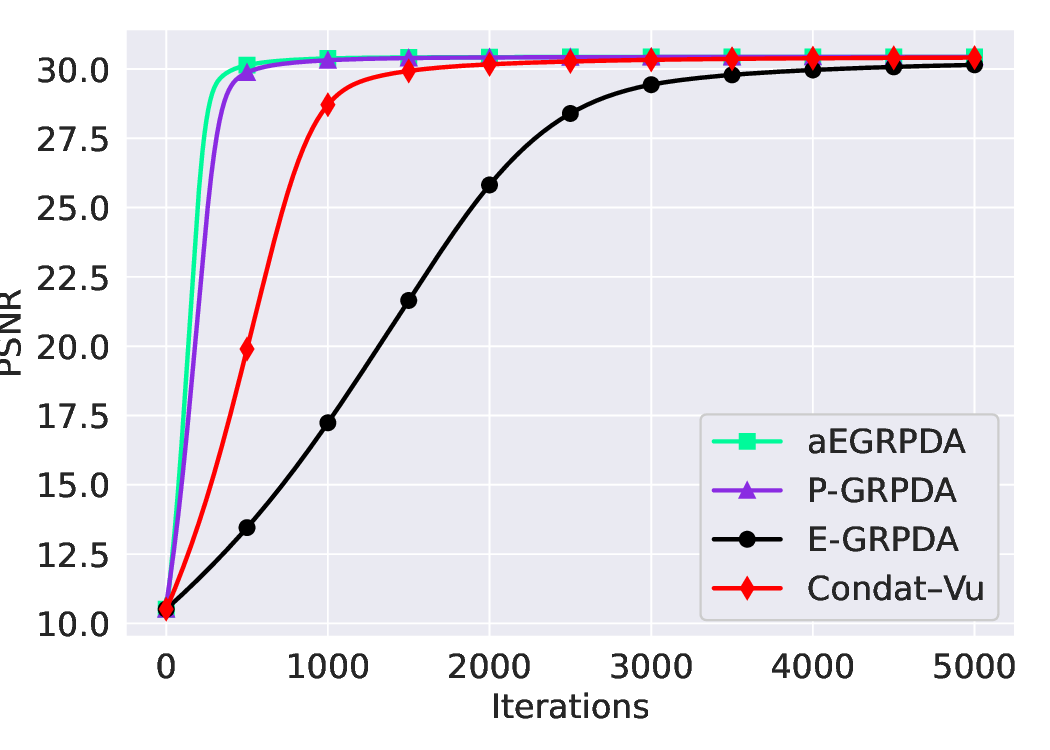}
    \caption{PSNR vs.\ iterations}
    \label{fig:psnr}
  \end{subfigure}
  \caption{Convergence plot and PSNR comparison for the TV-$L_2$ inpainting problem}
  \label{fig:TV-L_2 convergence plots}
\end{figure}

% ============================================================

%-----------------------Conclusions-------------------------%--------------------------
%-----------------------------------------------%--------------------------------------
\section{Conclusion}
In this work, we introduced two new stepsize rules for E-GRPDA~\eqref{E-GRPDA} that eliminate the need for backtracking and remove the dependence on computing expensive parameters. Specifically, in the first stepsize rule, the primal stepsize $\tau_n$ converges, with its limit bounded below by a positive value. This crucial fact enabled us to establish both the global iterate convergence and the R-linear rate of convergence (when $h$ and $g^*$ are strongly convex) for Algorithm \ref{algorithm 1} without explicitly computing $\|K\|$ and $\bar L$. Additionally, we extended the convergence of the Algorithm \ref{algorithm 1} from the interval $(1, \phi]$ to $(1, 1 + \sqrt{3})$ as it was done for GRPDA \cite{chang2022grpdarevisited}. Next, we leveraged local information on the gradient of the smooth component function around the current iterates to establish the global convergence of Algorithm~\ref{algorithm 2}, with the mere assumption that the norm of the operator was known. We validated our approaches on a range of convex optimization problems, including Lasso, Fused Lasso, GraphNet, Logistic regression, and TV\text{--}$L_2$ image inpainting. The results consistently demonstrate the advantages of our proposed approaches over the existing ones.

We conclude this paper by discussing several directions that can be explored in future work.
\begin{itemize}
  \item \textbf{Acceleration.} It is desirable to investigate accelerated variants of Algorithm~\ref{algorithm 1} targeting an $\mathcal{O}(1/N^2)$ rate, in the spirit of GRPDA and E\text{--}GRPDA~\cite{chang2021goldengrpda,zhou2022new,chang2022goldenlinesearch}. It would also be interesting to explore Algorithm \ref{algorithm 2} in the case when $h$ is only locally smooth.
  \item \textbf{Estimating $\|K\|$ and $\bar{L}$.} In Algorithm~\ref{algorithm 2}, we estimate only the local Lipschitz constant while assuming knowledge of $\|K\|$. It would be desirable to estimate both $\|K\|$ and $\bar{L}$ using the same adaptive structure in Algorithm \ref{algorithm 2} without involving backtracking procedures.
  \item \textbf{Adaptation of Algorithm \ref{algorithm 1} and Algorithm \ref{algorithm 2}:} The stepsize strategy \eqref{eq:39c} and~\eqref{eq:alg2_c} can be adapted to explore other algorithm like SPDA-ie, proposed in \cite{chang2024splitting} to avoid explicit norm computation and to enhance practical robustness. Moreover, the same can be applied to explore the distributed setting proposed in \cite{yin2024golden}, when one of the local functions involved in the objective is locally smooth.
  \item \textbf{Bregman Extension:} One can change the metric of the Euclidean distance to Bregman divergence, and thus, Algorithm \ref{algorithm 1} and \ref{algorithm 2} can be studied with this change in effect. Also, stochastic extensions can be studied in this perspective for large-scale optimization problems.
  \item\textbf{Nonconvex Extensions:} It is unclear for now how to use nonconvexity in both P-GRPDA and aEGRPDA. However, in practice, many problems occur in nonconvex settings, so it would be a great direction to explore this setting, like using weak convexity, such as quasiconvex, instead of relying on the convexity of the component functions.
\end{itemize}

%--------------------------
\section*{Acknowledgements}
We thank the two anonymous referees for their careful observations, which have helped significantly improve the clarity of this paper.
We thank Dr. Felipe Atenas for the valuable feedback on this paper. The research of Santanu Soe was supported by the Prime Minister's Research Fellowship program (Project number SB23242132MAPMRF005015), Ministry of Education, Government of India, and the Melbourne Research Scholarship. Matthew K. Tam was supported in part by Australian Research Council grant DP230101749. V. Vetrivel was supported by the Department of Science and Technology, SERB, India, under the project ‘FIST Program-2022’ (SR/FST/MSIAI/2022/107).
%---------------------------
\section*{Data availability}
The datasets used in our numerical experiments are based on setups described in~\cite{malitsky2018first, chang2021goldengrpda,malitsky2020golden}. All implementation code supporting the results of this paper is publicly available at \url{https://github.com/soesantanu/Adaptive-GRPDA-Experiments}.
%------------------------------------
\section*{Conflict of interest}
The authors declare that they have no conflict of interest in this paper.

%%===========================================================================================%%

\bibliography{sn-bibliography}% common bib file

%% BioMed_Central_Bib_Style_v1.01

\begin{thebibliography}{54}
% BibTex style file: bmc-mathphys.bst (version 2.1), 2014-07-24
\ifx \bisbn   \undefined \def \bisbn  #1{ISBN #1}\fi
\ifx \binits  \undefined \def \binits#1{#1}\fi
\ifx \bauthor  \undefined \def \bauthor#1{#1}\fi
\ifx \batitle  \undefined \def \batitle#1{#1}\fi
\ifx \bjtitle  \undefined \def \bjtitle#1{#1}\fi
\ifx \bvolume  \undefined \def \bvolume#1{\textbf{#1}}\fi
\ifx \byear  \undefined \def \byear#1{#1}\fi
\ifx \bissue  \undefined \def \bissue#1{#1}\fi
\ifx \bfpage  \undefined \def \bfpage#1{#1}\fi
\ifx \blpage  \undefined \def \blpage #1{#1}\fi
\ifx \burl  \undefined \def \burl#1{\textsf{#1}}\fi
\ifx \doiurl  \undefined \def \doiurl#1{\url{https://doi.org/#1}}\fi
\ifx \betal  \undefined \def \betal{\textit{et al.}}\fi
\ifx \binstitute  \undefined \def \binstitute#1{#1}\fi
\ifx \binstitutionaled  \undefined \def \binstitutionaled#1{#1}\fi
\ifx \bctitle  \undefined \def \bctitle#1{#1}\fi
\ifx \beditor  \undefined \def \beditor#1{#1}\fi
\ifx \bpublisher  \undefined \def \bpublisher#1{#1}\fi
\ifx \bbtitle  \undefined \def \bbtitle#1{#1}\fi
\ifx \bedition  \undefined \def \bedition#1{#1}\fi
\ifx \bseriesno  \undefined \def \bseriesno#1{#1}\fi
\ifx \blocation  \undefined \def \blocation#1{#1}\fi
\ifx \bsertitle  \undefined \def \bsertitle#1{#1}\fi
\ifx \bsnm \undefined \def \bsnm#1{#1}\fi
\ifx \bsuffix \undefined \def \bsuffix#1{#1}\fi
\ifx \bparticle \undefined \def \bparticle#1{#1}\fi
\ifx \barticle \undefined \def \barticle#1{#1}\fi
\bibcommenthead
\ifx \bconfdate \undefined \def \bconfdate #1{#1}\fi
\ifx \botherref \undefined \def \botherref #1{#1}\fi
\ifx \url \undefined \def \url#1{\textsf{#1}}\fi
\ifx \bchapter \undefined \def \bchapter#1{#1}\fi
\ifx \bbook \undefined \def \bbook#1{#1}\fi
\ifx \bcomment \undefined \def \bcomment#1{#1}\fi
\ifx \oauthor \undefined \def \oauthor#1{#1}\fi
\ifx \citeauthoryear \undefined \def \citeauthoryear#1{#1}\fi
\ifx \endbibitem  \undefined \def \endbibitem {}\fi
\ifx \bconflocation  \undefined \def \bconflocation#1{#1}\fi
\ifx \arxivurl  \undefined \def \arxivurl#1{\textsf{#1}}\fi
\csname PreBibitemsHook\endcsname

%%% 1
\bibitem[\protect\citeauthoryear{Tibshirani et~al.}{2005}]{tibshirani2005sparsity}
\begin{barticle}
\bauthor{\bsnm{Tibshirani}, \binits{R.}},
\bauthor{\bsnm{Saunders}, \binits{M.}},
\bauthor{\bsnm{Rosset}, \binits{S.}},
\bauthor{\bsnm{Zhu}, \binits{J.}},
\bauthor{\bsnm{Knight}, \binits{K.}}:
\batitle{Sparsity and smoothness via the fused {L}asso}.
\bjtitle{J. R. Stat. Soc. Ser. B.}
\bvolume{67}(\bissue{1}),
\bfpage{91}--\blpage{108}
(\byear{2005})
\end{barticle}
\endbibitem

%%% 2
\bibitem[\protect\citeauthoryear{Yuan and Lin}{2006}]{yuan2006model}
\begin{barticle}
\bauthor{\bsnm{Yuan}, \binits{M.}},
\bauthor{\bsnm{Lin}, \binits{Y.}}:
\batitle{Model selection and estimation in regression with grouped variables}.
\bjtitle{J. R. Stat. Soc. Ser. B.}
\bvolume{68}(\bissue{1}),
\bfpage{49}--\blpage{67}
(\byear{2006})
\end{barticle}
\endbibitem

%%% 3
\bibitem[\protect\citeauthoryear{Yan}{2018}]{yan2018new}
\begin{barticle}
\bauthor{\bsnm{Yan}, \binits{M.}}:
\batitle{A new primal--dual algorithm for minimizing the sum of three functions with a linear operator}.
\bjtitle{J. Sci. Comput.}
\bvolume{76},
\bfpage{1698}--\blpage{1717}
(\byear{2018})
\end{barticle}
\endbibitem

%%% 4
\bibitem[\protect\citeauthoryear{Tibshirani}{1996}]{tibshirani1996regression}
\begin{barticle}
\bauthor{\bsnm{Tibshirani}, \binits{R.}}:
\batitle{Regression shrinkage and selection via the {L}asso}.
\bjtitle{J. R. Stat. Soc. Ser. B.}
\bvolume{58}(\bissue{1}),
\bfpage{267}--\blpage{288}
(\byear{1996})
\end{barticle}
\endbibitem

%%% 5
\bibitem[\protect\citeauthoryear{Zou and Hastie}{2005}]{zou2005regularization}
\begin{barticle}
\bauthor{\bsnm{Zou}, \binits{H.}},
\bauthor{\bsnm{Hastie}, \binits{T.}}:
\batitle{Regularization and variable selection via the elastic net}.
\bjtitle{J. R. Stat. Soc. Ser. B.}
\bvolume{67}(\bissue{2}),
\bfpage{301}--\blpage{320}
(\byear{2005})
\end{barticle}
\endbibitem

%%% 6
\bibitem[\protect\citeauthoryear{Goldstein et~al.}{2015}]{goldstein2015adaptive}
\begin{botherref}
\oauthor{\bsnm{Goldstein}, \binits{T.}},
\oauthor{\bsnm{Li}, \binits{M.}},
\oauthor{\bsnm{Yuan}, \binits{X.}}:
Adaptive primal-dual splitting methods for statistical learning and image processing.
Adv. Neural Inf. Process. Syst.
\textbf{28}
(2015)
\end{botherref}
\endbibitem

%%% 7
\bibitem[\protect\citeauthoryear{Combettes and Wajs}{2005}]{r6}
\begin{barticle}
\bauthor{\bsnm{Combettes}, \binits{P.L.}},
\bauthor{\bsnm{Wajs}, \binits{V.R.}}:
\batitle{Signal recovery by proximal forward-backward splitting}.
\bjtitle{Multiscale Model. Simul.}
\bvolume{4}(\bissue{4}),
\bfpage{1168}--\blpage{1200}
(\byear{2005})
\end{barticle}
\endbibitem

%%% 8
\bibitem[\protect\citeauthoryear{Zhu and Chan}{2008}]{zhu2008efficient}
\begin{barticle}
\bauthor{\bsnm{Zhu}, \binits{M.}},
\bauthor{\bsnm{Chan}, \binits{T.}}:
\batitle{An efficient primal-dual hybrid gradient algorithm for total variation image restoration}.
\bjtitle{UCLA Cam Report}
\bvolume{34},
\bfpage{8}--\blpage{34}
(\byear{2008})
\end{barticle}
\endbibitem

%%% 9
\bibitem[\protect\citeauthoryear{Nedi{\'c} and Ozdaglar}{2009}]{nedic2009subgradient}
\begin{barticle}
\bauthor{\bsnm{Nedi{\'c}}, \binits{A.}},
\bauthor{\bsnm{Ozdaglar}, \binits{A.}}:
\batitle{Subgradient methods for saddle-point problems}.
\bjtitle{J. Optim. Theory Appl.}
\bvolume{142},
\bfpage{205}--\blpage{228}
(\byear{2009})
\end{barticle}
\endbibitem

%%% 10
\bibitem[\protect\citeauthoryear{Bertsekas and Gafni}{1982}]{bertsekas2009projection}
\begin{bbook}
\bauthor{\bsnm{Bertsekas}, \binits{D.P.}},
\bauthor{\bsnm{Gafni}, \binits{E.M.}}:
In: \beditor{\bsnm{Sorensen}, \binits{D.C.}},
\beditor{\bsnm{Wets}, \binits{R.J.-B.}} (eds.)
\bbtitle{Projection methods for variational inequalities with application to the traffic assignment problem},
pp. \bfpage{139}--\blpage{159}.
\bpublisher{Springer},
\blocation{Berlin}
(\byear{1982})
\end{bbook}
\endbibitem

%%% 11
\bibitem[\protect\citeauthoryear{Yang and Zhang}{2011}]{yang2011alternating}
\begin{barticle}
\bauthor{\bsnm{Yang}, \binits{J.}},
\bauthor{\bsnm{Zhang}, \binits{Y.}}:
\batitle{Alternating direction algorithms for \(\ell_1\)-problems in compressive sensing}.
\bjtitle{SIAM J. Sci. Comput.}
\bvolume{33}(\bissue{1}),
\bfpage{250}--\blpage{278}
(\byear{2011})
\end{barticle}
\endbibitem

%%% 12
\bibitem[\protect\citeauthoryear{Chambolle and Pock}{2011}]{chambolle2011first}
\begin{barticle}
\bauthor{\bsnm{Chambolle}, \binits{A.}},
\bauthor{\bsnm{Pock}, \binits{T.}}:
\batitle{A first-order primal-dual algorithm for convex problems with applications to imaging}.
\bjtitle{J. Math. Imaging Vision}
\bvolume{40},
\bfpage{120}--\blpage{145}
(\byear{2011})
\end{barticle}
\endbibitem

%%% 13
\bibitem[\protect\citeauthoryear{Lions and Mercier}{1979}]{lions1979splitting}
\begin{barticle}
\bauthor{\bsnm{Lions}, \binits{P.-L.}},
\bauthor{\bsnm{Mercier}, \binits{B.}}:
\batitle{Splitting algorithms for the sum of two nonlinear operators}.
\bjtitle{SIAM J. Numer. Anal.}
\bvolume{16}(\bissue{6}),
\bfpage{964}--\blpage{979}
(\byear{1979})
\end{barticle}
\endbibitem

%%% 14
\bibitem[\protect\citeauthoryear{Abbas and Attouch}{2015}]{abbas2015dynamical}
\begin{barticle}
\bauthor{\bsnm{Abbas}, \binits{B.}},
\bauthor{\bsnm{Attouch}, \binits{H.}}:
\batitle{Dynamical systems and forward-backward algorithms associated with the sum of a convex subdifferential and a monotone cocoercive operator}.
\bjtitle{Optimization}
\bvolume{64}(\bissue{10}),
\bfpage{2223}--\blpage{2252}
(\byear{2015})
\end{barticle}
\endbibitem

%%% 15
\bibitem[\protect\citeauthoryear{Nesterov}{2013}]{nesterov2013gradient}
\begin{barticle}
\bauthor{\bsnm{Nesterov}, \binits{Y.}}:
\batitle{Gradient methods for minimizing composite functions}.
\bjtitle{Math. Program.}
\bvolume{140}(\bissue{1}),
\bfpage{125}--\blpage{161}
(\byear{2013})
\end{barticle}
\endbibitem

%%% 16
\bibitem[\protect\citeauthoryear{Parikh and Boyd}{2014}]{10.1561/2400000003}
\begin{barticle}
\bauthor{\bsnm{Parikh}, \binits{N.}},
\bauthor{\bsnm{Boyd}, \binits{S.}}:
\batitle{Proximal algorithms}.
\bjtitle{Found. Trends Optim.}
\bvolume{1}(\bissue{3}),
\bfpage{127}--\blpage{239}
(\byear{2014})
\end{barticle}
\endbibitem

%%% 17
\bibitem[\protect\citeauthoryear{Gabay and Mercier}{1976}]{gabay1976dual}
\begin{barticle}
\bauthor{\bsnm{Gabay}, \binits{D.}},
\bauthor{\bsnm{Mercier}, \binits{B.}}:
\batitle{A dual algorithm for the solution of nonlinear variational problems via finite element approximation}.
\bjtitle{Comput. Math. Appl.}
\bvolume{2}(\bissue{1}),
\bfpage{17}--\blpage{40}
(\byear{1976})
\end{barticle}
\endbibitem

%%% 18
\bibitem[\protect\citeauthoryear{Glowinski and Marroco}{1975}]{glowinski1975approximation}
\begin{barticle}
\bauthor{\bsnm{Glowinski}, \binits{R.}},
\bauthor{\bsnm{Marroco}, \binits{A.}}:
\batitle{Sur l'approximation, par {\'e}l{\'e}ments finis d'ordre un, et la r{\'e}solution, par p{\'e}nalisation-dualit{\'e} d'une classe de probl{\`e}mes de dirichlet non lin{\'e}aires}.
\bjtitle{Revue Fran{\c{c}}aise D'automatique, Informatique, Recherche Op{\'e}rationnelle. Analyse Num{\'e}rique}
\bvolume{9}(\bissue{R2}),
\bfpage{41}--\blpage{76}
(\byear{1975})
\end{barticle}
\endbibitem

%%% 19
\bibitem[\protect\citeauthoryear{Zhang et~al.}{2011}]{zhang2011unified}
\begin{barticle}
\bauthor{\bsnm{Zhang}, \binits{X.}},
\bauthor{\bsnm{Burger}, \binits{M.}},
\bauthor{\bsnm{Osher}, \binits{S.}}:
\batitle{A unified primal-dual algorithm framework based on {B}regman iteration}.
\bjtitle{J. Sci. Comput.}
\bvolume{46},
\bfpage{20}--\blpage{46}
(\byear{2011})
\end{barticle}
\endbibitem

%%% 20
\bibitem[\protect\citeauthoryear{Pock and Chambolle}{2011}]{pock2011diagonal}
\begin{bchapter}
\bauthor{\bsnm{Pock}, \binits{T.}},
\bauthor{\bsnm{Chambolle}, \binits{A.}}:
\bctitle{Diagonal preconditioning for first order primal-dual algorithms in convex optimization}.
In: \bbtitle{IEEE Int. Conf. Comput. Vis.},
pp. \bfpage{1762}--\blpage{1769}
(\byear{2011})
\end{bchapter}
\endbibitem

%%% 21
\bibitem[\protect\citeauthoryear{Uzawa}{1989}]{uzawa1958iterative}
\begin{bbook}
\bauthor{\bsnm{Uzawa}, \binits{H.}}:
\bbtitle{Iterative methods for concave programming},
pp. \bfpage{135}--\blpage{148}.
\bpublisher{Cambridge University Press},
\blocation{Cambridge}
(\byear{1989})
\end{bbook}
\endbibitem

%%% 22
\bibitem[\protect\citeauthoryear{He and Yuan}{2012}]{he2012convergence}
\begin{barticle}
\bauthor{\bsnm{He}, \binits{B.}},
\bauthor{\bsnm{Yuan}, \binits{X.}}:
\batitle{Convergence analysis of primal-dual algorithms for a saddle-point problem: from contraction perspective}.
\bjtitle{SIAM J. Imaging Sci.}
\bvolume{5}(\bissue{1}),
\bfpage{119}--\blpage{149}
(\byear{2012})
\end{barticle}
\endbibitem

%%% 23
\bibitem[\protect\citeauthoryear{He et~al.}{2022}]{he2022convergence}
\begin{barticle}
\bauthor{\bsnm{He}, \binits{B.}},
\bauthor{\bsnm{Xu}, \binits{S.}},
\bauthor{\bsnm{Yuan}, \binits{X.}}:
\batitle{On convergence of the {A}rrow--{H}urwicz method for saddle point problems}.
\bjtitle{J. Math. Imaging Vis.}
\bvolume{64}(\bissue{6}),
\bfpage{662}--\blpage{671}
(\byear{2022})
\end{barticle}
\endbibitem

%%% 24
\bibitem[\protect\citeauthoryear{Condat}{2013}]{condat2013primal}
\begin{barticle}
\bauthor{\bsnm{Condat}, \binits{L.}}:
\batitle{A primal--dual splitting method for convex optimization involving {L}ipschitzian, proximable and linear composite terms}.
\bjtitle{J. Optim. Theory Appl.}
\bvolume{158}(\bissue{2}),
\bfpage{460}--\blpage{479}
(\byear{2013})
\end{barticle}
\endbibitem

%%% 25
\bibitem[\protect\citeauthoryear{Chang and Yang}{2021}]{chang2021goldengrpda}
\begin{barticle}
\bauthor{\bsnm{Chang}, \binits{X.}},
\bauthor{\bsnm{Yang}, \binits{J.}}:
\batitle{A golden ratio primal--dual algorithm for structured convex optimization}.
\bjtitle{J. Sci. Comput.}
\bvolume{87},
\bfpage{1}--\blpage{26}
(\byear{2021})
\end{barticle}
\endbibitem

%%% 26
\bibitem[\protect\citeauthoryear{V{\~u}}{2013}]{vu2013splitting}
\begin{barticle}
\bauthor{\bsnm{V{\~u}}, \binits{B.C.}}:
\batitle{A splitting algorithm for dual monotone inclusions involving cocoercive operators}.
\bjtitle{Adv. Comput. Math.}
\bvolume{38}(\bissue{3}),
\bfpage{667}--\blpage{681}
(\byear{2013})
\end{barticle}
\endbibitem

%%% 27
\bibitem[\protect\citeauthoryear{Chen et~al.}{2016}]{chen2016primal}
\begin{barticle}
\bauthor{\bsnm{Chen}, \binits{P.}},
\bauthor{\bsnm{Huang}, \binits{J.}},
\bauthor{\bsnm{Zhang}, \binits{X.}}:
\batitle{A primal-dual fixed point algorithm for minimization of the sum of three convex separable functions}.
\bjtitle{Fixed Point Theory Appl.}
\bvolume{2016},
\bfpage{1}--\blpage{18}
(\byear{2016})
\end{barticle}
\endbibitem

%%% 28
\bibitem[\protect\citeauthoryear{Salim et~al.}{2022}]{salim2022dualize}
\begin{barticle}
\bauthor{\bsnm{Salim}, \binits{A.}},
\bauthor{\bsnm{Condat}, \binits{L.}},
\bauthor{\bsnm{Mishchenko}, \binits{K.}},
\bauthor{\bsnm{Richt{\'a}rik}, \binits{P.}}:
\batitle{Dualize, split, randomize: Toward fast nonsmooth optimization algorithms}.
\bjtitle{J. Optim. Theory Appl.}
\bvolume{195}(\bissue{1}),
\bfpage{102}--\blpage{130}
(\byear{2022})
\end{barticle}
\endbibitem

%%% 29
\bibitem[\protect\citeauthoryear{Malitsky and Tam}{2023}]{malitsky2023first&tam}
\begin{botherref}
\oauthor{\bsnm{Malitsky}, \binits{Y.}},
\oauthor{\bsnm{Tam}, \binits{M.K.}}:
A first-order algorithm for decentralised min-max problems.
arXiv preprint arXiv:2308.11876
(2023)
\end{botherref}
\endbibitem

%%% 30
\bibitem[\protect\citeauthoryear{Zhou et~al.}{2022}]{zhou2022new}
\begin{barticle}
\bauthor{\bsnm{Zhou}, \binits{D.}},
\bauthor{\bsnm{Chang}, \binits{X.}},
\bauthor{\bsnm{Yang}, \binits{J.}}, \betal:
\batitle{A new primal-dual algorithm for structured convex optimization involving a {L}ipschitzian term}.
\bjtitle{Pac. J. Optim.}
\bvolume{18}(\bissue{2}),
\bfpage{497}--\blpage{517}
(\byear{2022})
\end{barticle}
\endbibitem

%%% 31
\bibitem[\protect\citeauthoryear{Sidky et~al.}{2012}]{sidky2012convex}
\begin{barticle}
\bauthor{\bsnm{Sidky}, \binits{E.Y.}},
\bauthor{\bsnm{J{\o}rgensen}, \binits{J.H.}},
\bauthor{\bsnm{Pan}, \binits{X.}}:
\batitle{Convex optimization problem prototyping for image reconstruction in computed tomography with the {C}hambolle--{P}ock algorithm}.
\bjtitle{Phys. Med. Biol.}
\bvolume{57}(\bissue{10}),
\bfpage{3065}
(\byear{2012})
\end{barticle}
\endbibitem

%%% 32
\bibitem[\protect\citeauthoryear{Malitsky and Pock}{2018}]{malitsky2018first}
\begin{barticle}
\bauthor{\bsnm{Malitsky}, \binits{Y.}},
\bauthor{\bsnm{Pock}, \binits{T.}}:
\batitle{A first-order primal-dual algorithm with linesearch}.
\bjtitle{SIAM J. Optim.}
\bvolume{28}(\bissue{1}),
\bfpage{411}--\blpage{432}
(\byear{2018})
\end{barticle}
\endbibitem

%%% 33
\bibitem[\protect\citeauthoryear{Chang et~al.}{2022}]{chang2022goldenlinesearch}
\begin{barticle}
\bauthor{\bsnm{Chang}, \binits{X.-K.}},
\bauthor{\bsnm{Yang}, \binits{J.}},
\bauthor{\bsnm{Zhang}, \binits{H.}}:
\batitle{Golden ratio primal-dual algorithm with linesearch}.
\bjtitle{SIAM J. Optim.}
\bvolume{32}(\bissue{3}),
\bfpage{1584}--\blpage{1613}
(\byear{2022})
\end{barticle}
\endbibitem

%%% 34
\bibitem[\protect\citeauthoryear{Malitsky and Mishchenko}{2020}]{malitsky2020adaptive}
\begin{bchapter}
\bauthor{\bsnm{Malitsky}, \binits{Y.}},
\bauthor{\bsnm{Mishchenko}, \binits{K.}}:
\bctitle{Adaptive gradient descent without descent}.
In: \bbtitle{37th Int. Conf. Mach. Learn. ICML 2020},
vol. \bseriesno{119},
pp. \bfpage{6702}--\blpage{6712}
(\byear{2020})
\end{bchapter}
\endbibitem

%%% 35
\bibitem[\protect\citeauthoryear{Vladarean et~al.}{2021}]{vladarean2021first}
\begin{barticle}
\bauthor{\bsnm{Vladarean}, \binits{M.L.}},
\bauthor{\bsnm{Malitsky}, \binits{Y.}},
\bauthor{\bsnm{Cevher}, \binits{V.}}:
\batitle{A first-order primal-dual method with adaptivity to local smoothness}.
\bjtitle{Adv. Neural Inf. Process. Syst.}
\bvolume{34},
\bfpage{6171}--\blpage{6182}
(\byear{2021})
\end{barticle}
\endbibitem

%%% 36
\bibitem[\protect\citeauthoryear{Latafat et~al.}{2023}]{latafat2023adaptive}
\begin{botherref}
\oauthor{\bsnm{Latafat}, \binits{P.}},
\oauthor{\bsnm{Themelis}, \binits{A.}},
\oauthor{\bsnm{Stella}, \binits{L.}},
\oauthor{\bsnm{Patrinos}, \binits{P.}}:
Adaptive proximal algorithms for convex optimization under local {L}ipschitz continuity of the gradient.
arXiv preprint arXiv:2301.04431v4
(2023)
\end{botherref}
\endbibitem

%%% 37
\bibitem[\protect\citeauthoryear{Malitsky and Mishchenko}{2023}]{Yuramalitsky}
\begin{botherref}
\oauthor{\bsnm{Malitsky}, \binits{Y.}},
\oauthor{\bsnm{Mishchenko}, \binits{K.}}:
Adaptive proximal gradient method for convex optimization.
arXiv preprint arXiv:2308.02261v2
(2023)
\end{botherref}
\endbibitem

%%% 38
\bibitem[\protect\citeauthoryear{Malitsky}{2020}]{malitsky2020golden}
\begin{barticle}
\bauthor{\bsnm{Malitsky}, \binits{Y.}}:
\batitle{Golden ratio algorithms for variational inequalities}.
\bjtitle{Math. Program.}
\bvolume{184}(\bissue{1-2}),
\bfpage{383}--\blpage{410}
(\byear{2020})
\end{barticle}
\endbibitem

%%% 39
\bibitem[\protect\citeauthoryear{Chang and Yang}{2022}]{chang2022grpdarevisited}
\begin{barticle}
\bauthor{\bsnm{Chang}, \binits{X.}},
\bauthor{\bsnm{Yang}, \binits{J.}}:
\batitle{G{RPDA} revisited: Relaxed condition and connection to {C}hambolle-{P}ock’s primal-dual algorithm}.
\bjtitle{J. Sci. Comput.}
\bvolume{93}(\bissue{3}),
\bfpage{70}
(\byear{2022})
\end{barticle}
\endbibitem

%%% 40
\bibitem[\protect\citeauthoryear{Ryu and Yin}{2022}]{wotaoyu2022large}
\begin{bbook}
\bauthor{\bsnm{Ryu}, \binits{E.K.}},
\bauthor{\bsnm{Yin}, \binits{W.}}:
\bbtitle{Large-Scale Convex Optimization: Algorithms \& Analyses Via Monotone Operators}.
\bpublisher{Cambridge University Press},
\blocation{Cambridge}
(\byear{2022})
\end{bbook}
\endbibitem

%%% 41
\bibitem[\protect\citeauthoryear{Bauschke and Combettes}{2017}]{bauschke2017correction}
\begin{bbook}
\bauthor{\bsnm{Bauschke}, \binits{H.H.}},
\bauthor{\bsnm{Combettes}, \binits{P.L.}}:
\bbtitle{Convex Analysis and Monotone Operator Theory in Hilbert Spaces}.
\bpublisher{Springer},
\blocation{New York}
(\byear{2017})
\end{bbook}
\endbibitem

%%% 42
\bibitem[\protect\citeauthoryear{Rockafellar}{1970}]{Rockafellar+1970}
\begin{bbook}
\bauthor{\bsnm{Rockafellar}, \binits{R.T.}}:
\bbtitle{Convex Analysis}.
\bpublisher{Princeton University Press},
\blocation{Princeton}
(\byear{1970})
\end{bbook}
\endbibitem

%%% 43
\bibitem[\protect\citeauthoryear{Beck}{2017}]{beck2017first}
\begin{bbook}
\bauthor{\bsnm{Beck}, \binits{A.}}:
\bbtitle{First-order Methods in Optimization}.
\bpublisher{SIAM},
\blocation{Philadelphia}
(\byear{2017})
\end{bbook}
\endbibitem

%%% 44
\bibitem[\protect\citeauthoryear{Chambolle and Pock}{2016}]{chambolle2016ergodic}
\begin{barticle}
\bauthor{\bsnm{Chambolle}, \binits{A.}},
\bauthor{\bsnm{Pock}, \binits{T.}}:
\batitle{On the ergodic convergence rates of a first-order primal--dual algorithm}.
\bjtitle{Math. Program.}
\bvolume{159}(\bissue{1}),
\bfpage{253}--\blpage{287}
(\byear{2016})
\end{barticle}
\endbibitem

%%% 45
\bibitem[\protect\citeauthoryear{Tam and Uteda}{2023}]{tam2023bregman}
\begin{barticle}
\bauthor{\bsnm{Tam}, \binits{M.K.}},
\bauthor{\bsnm{Uteda}, \binits{D.J.}}:
\batitle{Bregman golden ratio algorithms for variational inequalities}.
\bjtitle{J. Optim. Theory Appl.}
\bvolume{199}(\bissue{3}),
\bfpage{993}--\blpage{1021}
(\byear{2023})
\end{barticle}
\endbibitem

%%% 46
\bibitem[\protect\citeauthoryear{Grosenick et~al.}{2013}]{Grosenick2013GraphNet}
\begin{barticle}
\bauthor{\bsnm{Grosenick}, \binits{L.}},
\bauthor{\bsnm{Klingenberg}, \binits{B.}},
\bauthor{\bsnm{Katovich}, \binits{K.}},
\bauthor{\bsnm{Knutson}, \binits{B.}},
\bauthor{\bsnm{Taylor}, \binits{J.E.}}:
\batitle{Interpretable whole-brain prediction analysis with graphnet}.
\bjtitle{NeuroImage}
\bvolume{72},
\bfpage{304}--\blpage{321}
(\byear{2013})
\end{barticle}
\endbibitem

%%% 47
\bibitem[\protect\citeauthoryear{Shuman et~al.}{2013}]{shuman2013emerging}
\begin{barticle}
\bauthor{\bsnm{Shuman}, \binits{D.I.}},
\bauthor{\bsnm{Narang}, \binits{S.K.}},
\bauthor{\bsnm{Frossard}, \binits{P.}},
\bauthor{\bsnm{Ortega}, \binits{A.}},
\bauthor{\bsnm{Vandergheynst}, \binits{P.}}:
\batitle{The emerging field of signal processing on graphs: Extending high-dimensional data analysis to networks and other irregular domains}.
\bjtitle{IEEE Signal Process. Mag.}
\bvolume{30}(\bissue{3}),
\bfpage{83}--\blpage{98}
(\byear{2013})
\end{barticle}
\endbibitem

%%% 48
\bibitem[\protect\citeauthoryear{Li et~al.}{2020}]{li2020graph}
\begin{barticle}
\bauthor{\bsnm{Li}, \binits{Y.}},
\bauthor{\bsnm{Mark}, \binits{B.}},
\bauthor{\bsnm{Raskutti}, \binits{G.}},
\bauthor{\bsnm{Willett}, \binits{R.}},
\bauthor{\bsnm{Song}, \binits{H.}},
\bauthor{\bsnm{Neiman}, \binits{D.}}:
\batitle{Graph-based regularization for regression problems with alignment and highly correlated designs}.
\bjtitle{SIAM J. Math. Data Sci.}
\bvolume{2}(\bissue{2}),
\bfpage{480}--\blpage{504}
(\byear{2020})
\end{barticle}
\endbibitem

%%% 49
\bibitem[\protect\citeauthoryear{Donoho}{2006}]{donoho2006compressed}
\begin{barticle}
\bauthor{\bsnm{Donoho}, \binits{D.L.}}:
\batitle{Compressed sensing}.
\bjtitle{IEEE Trans. Inf. Theory}
\bvolume{52}(\bissue{4}),
\bfpage{1289}--\blpage{1306}
(\byear{2006})
\end{barticle}
\endbibitem

%%% 50
\bibitem[\protect\citeauthoryear{David and Nagaraja}{2004}]{david2004order}
\begin{bbook}
\bauthor{\bsnm{David}, \binits{H.A.}},
\bauthor{\bsnm{Nagaraja}, \binits{H.N.}}:
\bbtitle{Order Statistics}.
\bpublisher{John Wiley \& Sons},
\blocation{Hoboken, NJ}
(\byear{2004})
\end{bbook}
\endbibitem

%%% 51
\bibitem[\protect\citeauthoryear{Condat}{2017}]{condat2017discrete}
\begin{barticle}
\bauthor{\bsnm{Condat}, \binits{L.}}:
\batitle{Discrete total variation: New definition and minimization}.
\bjtitle{SIAM J. Imaging Sci.}
\bvolume{10}(\bissue{3}),
\bfpage{1258}--\blpage{1290}
(\byear{2017})
\end{barticle}
\endbibitem

%%% 52
\bibitem[\protect\citeauthoryear{Rudin et~al.}{1992}]{RudinOsherFatemi1992}
\begin{barticle}
\bauthor{\bsnm{Rudin}, \binits{L.I.}},
\bauthor{\bsnm{Osher}, \binits{S.}},
\bauthor{\bsnm{Fatemi}, \binits{E.}}:
\batitle{Nonlinear total variation based noise removal algorithms}.
\bjtitle{Physica D}
\bvolume{60}(\bissue{1--4}),
\bfpage{259}--\blpage{268}
(\byear{1992})
\end{barticle}
\endbibitem

%%% 53
\bibitem[\protect\citeauthoryear{Chang et~al.}{2024}]{chang2024splitting}
\begin{barticle}
\bauthor{\bsnm{Chang}, \binits{X.}},
\bauthor{\bsnm{Xu}, \binits{L.}},
\bauthor{\bsnm{Cao}, \binits{J.}}:
\batitle{A splitting preconditioned primal-dual algorithm with interpolation and extrapolation for bilinear saddle point problem}.
\bjtitle{Numer. Algorithms}
(\byear{2024})
\doiurl{10.1007/s11075-024-01974-x}
\end{barticle}
\endbibitem

%%% 54
\bibitem[\protect\citeauthoryear{Yin and Yang}{2024}]{yin2024golden}
\begin{barticle}
\bauthor{\bsnm{Yin}, \binits{C.}},
\bauthor{\bsnm{Yang}, \binits{J.}}:
\batitle{Golden ratio proximal gradient admm for distributed composite convex optimization}.
\bjtitle{J. Optim. Theory Appl.}
\bvolume{200}(\bissue{3}),
\bfpage{895}--\blpage{922}
(\byear{2024})
\end{barticle}
\endbibitem

\end{thebibliography}
%% if required, the content of .bbl file can be included here once bbl is generated
%%\input sn-article.bbl
\end{document}